\newtheorem{thm}{Theorem}[section]
\newtheorem{lem}[thm]{Lemma}
\newtheorem{cor}[thm]{Corollary}
\newtheorem{prop}[thm]{Proposition}
\theoremstyle{definition}
\newtheorem{defn}[thm]{Definition}
\newtheorem{rem}[thm]{Remark}
\newtheorem{eg}[thm]{Example}
\DeclareMathOperator{\ob}{ob}
\providecommand*{\cupdot}{%
  \mathbin{%
    \mathpalette\@cupdot{}%
  }%
}
\newcommand*{\@cupdot}[2]{%
  \ooalign{%
    $\m@th#1\cup$\cr
    \sbox0{$#1\cup$}%
    \dimen@=\ht0 %
    \sbox0{$\m@th#1\cdot$}%
    \advance\dimen@ by -\ht0 %
    \dimen@=.5\dimen@
    \hidewidth\raise\dimen@\box0\hidewidth
  }%
}
\providecommand*{\bigcupdot}{%
  \mathop{%
    \vphantom{\bigcup}%
    \mathpalette\@bigcupdot{}%
  }%
}
\newcommand*{\@bigcupdot}[2]{%
  \ooalign{%
    $\m@th#1\bigcup$\cr
    \sbox0{$#1\bigcup$}%
    \dimen@=\ht0 %
    \advance\dimen@ by -\dp0 %
    \sbox0{\scalebox{2}{$\m@th#1\cdot$}}%
    \advance\dimen@ by -\ht0 %
    \dimen@=.5\dimen@
    \hidewidth\raise\dimen@\box0\hidewidth
  }%
}
\title{A categorical proof of the Carath\'eodory extension theorem}
\author{Ruben Van Belle \thanks{School of Mathematics, University of Edinburgh; ruben.van.belle@ed.ac.uk}}
\begin{document}
\maketitle
\begin{abstract}
The Carath\'eodory extension theorem is a fundamental result in measure theory. Often we do not know what a general measurable subset looks like. The Carath\'eodory extension theorem states that to define a measure we only need to assign values to subsets in a generating Boolean algebra.

To prove this result categorically, we represent (pre)measures and outer measures by
certain (co)lax and strict transformations. The Carath\'eodory extension then corresponds to
a Kan extension of strict transformations. We develop a general framework for extensions
of transformations between poset-valued functors and give several results on the existence
and construction of extensions of these transformations. We proceed by showing that transformations and functors corresponding to measures satisfy these results, which proves
the Carath\'eodory extension theorem.
\end{abstract}
\smalltableofcontents
\section{Introduction}



The Carath\'eodory extension theorem is an important result in measure theory. It guarantees the existence of the Lebesgue measure (or more generally the Lebesgue-Stieltjes measure) and of product measures. But it also is a key result in the proof of the Kolmogorov extensions theorem, which guarantees the existence of Brownian motion and which is closely related to martingale convergence results. 

If we want to define a measure on a measurable space $(X,\Sigma)$, we need to assign to every subset $A\in \Sigma$ an element of $[0,\infty]$. However, we often work with $\sigma$-algebras of the form $\sigma(\mathcal{B})$, where $\mathcal{B}$ is an algebra of subsets of $X$, i.e. closed under finite unions and complements. In this case it can be very difficult to know what a general measurable subset looks like and to assign real numbers to them in a $\sigma$-additive way. This problem is solved by the Carath\'eodory extension theorem. It states that a $\sigma$-additive map $\mathcal{B}\to [0,\infty]$ can be extended to a $\sigma$-additive map $\sigma(\mathcal{B})\to [0,\infty]$. Here, $\sigma$-additive should be interpreted as ‘$\sigma$-additive whenever  unions of countable pairwise disjoint collections exist’. Moreover, this extension is very often, but not always, unique. 
\[\begin{tikzcd}
	{\mathcal{B}} && {[0,\infty]} \\
	{\sigma(\mathcal{B})}
	\arrow[from=1-1, to=2-1]
	\arrow[from=1-1, to=1-3]
	\arrow[dashed, from=2-1, to=1-3]
\end{tikzcd}\]
The classical proof for this result is relatively long and technical (see for example Theorem 1.41 in \cite{klenke}).  It consists of different steps of extending and restricting back and requires several smart ‘tricks’ and constructions. In this paper we will give a categorical proof for the Carath\'eodory extension theorem using results on extensions of lax,colax and strict transformations between functors. Several parts of this proof look similar to steps in the classical proof. However, in our proof all constructions follow from the Kan extension formulas. Moreover, viewing the Carath\'eodory extension theorem in this categorical framework, allows us to compare it to extension results in other areas of mathematics. Furthermore, this technique allows us to easily generalize Carath\'eodory's result to measures taking values in other posets or spaces.

To do this we start by studying categories of certain transformations between functors and extensions of transformations along transformations. In section \ref{2} and \ref{3} we do this on a fairly abstract level. We discuss (co)laxification (co)monads and are in particular interested in the case that their (co)algebras are strict transformations. We give several abstract conditions for extensions of certain transformations to exist.

In section \ref{4} and \ref{5}, we give concrete constructions for the operations and extensions discussed in \ref{2} and \ref{3}. In this part (co)lax coends and (co)lax morphism classifiers play an important role. 

In the last two sections of the paper we apply the previous sections to measure theory. We start by representing inner and outer premeasures by certain transformations between certain functors. By applying the extension results to these functors, we obtain our categorical proof for Carath\'edory's extension result. We furthermore, make a distinction between \emph{left} and \emph{right} Carath\'edory extensions. The right one always exists and is the same as the extension in the classical proof, the left one however does not always exist. This is also related to the fact that inner measures and outer measures can behave surprisingly different from each other. Moreover, we immediately obtain a way to characterize these extensions by a universal property, namely as a maximal or minimal extension. This is interesting when uniqueness is not guaranteed. 

\begin{center}
\begin{tabular}{ c | c c }
  & Posets of transformations & Extensions of transformations \\ 
  \hline 
 Abstract theory & \ref{2} & \ref{3} \\  
 Concrete constructions & \ref{4} & \ref{5} \\
 Applications to measures & \ref{6} & \ref{7} \\
\end{tabular}
\end{center}

\textbf{Acknowledgments}: I would like to thank the two anonymous reviewers for their valuable comments and helpful suggestions.

\section{Posets of (co)lax transformations} \label{2}
In this section we will introduce and discuss posets of $\Sigma$-natural (co)lax transformations and strict transformations. In particular we will be interested in the embeddings between these and when these are (co)reflective. Furthermore, we will look at the (co)monads these adjunctions induce and study their (co)algebras. 

This section focuses on abstract existence results of (co)reflection operations; in Section \ref{4} we will give concrete constructions of these operations. In Section \ref{6}, we will represent inner (outer) premeasures as $\Sigma$-natural (co)lax transformations and strict transformations. The (co)reflection operations described in this section will then correspond to operations that turn inner (outer) premeasures into premeasures.

Let $\mathcal{C}$ be a small poset-enriched category. Let $\mathbf{Pos}$ be the category of posets and order-preserving maps, viewed as enriched over itself. Let $F$ and $G$ be enriched functors $\mathcal{C}\to \textbf{Pos}$. Let $\Sigma$ be a collection of morphisms in $\mathcal{C}$. 

We will study several generalizations of natural transformations. In the first variation we \emph{only} have naturality squares for morphisms in the fixed collection $\Sigma$.

\begin{defn}
    A $\Sigma$-\textbf{natural (general) transformation} $\tau: F\to G$ is a collection of order-preserving maps $(\tau_A:FA\to GA)_{A\in \text{ob}\mathcal{C}}$ such that \[\begin{tikzcd}
	FA & GA \\
	FB & GB
	\arrow["Ff"', from=1-1, to=2-1]
	\arrow["Gf", from=1-2, to=2-2]
	\arrow["{\tau_A}", from=1-1, to=1-2]
	\arrow["{\tau_B}"', from=2-1, to=2-2]
\end{tikzcd}\]
commutes for all $f\in \Sigma$. 
\end{defn}

In the other variations we will also allow \emph{weaker} naturality squares. 

\begin{defn}
    A $\Sigma$\textbf{-natural lax transformation} $\tau:F\to G$ is a collection of order-preserving maps $(\tau_A:FA\to GA)_{A\in \text{ob}\mathcal{C}}$ such that \[\begin{tikzcd}
	FA & GA \\
	FB & GB
	\arrow["Ff"', from=1-1, to=2-1]
	\arrow["Gf", from=1-2, to=2-2]
	\arrow["{\tau_A}", from=1-1, to=1-2]
	\arrow["{\tau_B}"', from=2-1, to=2-2]
	\arrow["\leq"{description}, draw=none, from=2-1, to=1-2]
\end{tikzcd}\]
for all morphisms $f:A\to B$ in $\mathcal{C}$ and such that this is an equality whenever $f\in \Sigma$.
\end{defn}

Dually, we can define  $\Sigma$-\textbf{natural colax transformations}, by reversing the inequality sign in the above definition. 

\begin{defn}
    A \textbf{strict transformation} $\tau:F\to G$ is a collection of order-preserving maps $(\tau_A:FA\to GA)_{A\in \text{ob}\mathcal{C}}$ such that \[\begin{tikzcd}
	FA & GA \\
	FB & GB
	\arrow["Ff"', from=1-1, to=2-1]
	\arrow["Gf", from=1-2, to=2-2]
	\arrow["{\tau_A}", from=1-1, to=1-2]
	\arrow["{\tau_B}"', from=2-1, to=2-2]
\end{tikzcd}\]
commutes for \emph{all} morphisms $f:A\to B$ in $\mathcal{C}$.
\end{defn}

Note that a strict transformation is the same as a general $\text{Mor}(\mathcal{C})$-natural transformation.  In the case that $\Sigma=\emptyset$, we will omit '$\Sigma$' in the terminology and notation. 

The set of $\Sigma$-natural transformations is partially ordered. The order is defined by \[ \tau^1\leq \tau^2 :\Leftrightarrow \tau^1_A(x)\leq \tau^2_A(x)\]  for all $A\in \ob\mathcal{C}$ and $x\in FA$ and for $\Sigma$-natural transformations $\tau^1$ and $\tau^2$.

The poset of $\Sigma$-natural transformations is denoted by $[F,G]^\Sigma$; the subposets of $\Sigma$-natural lax transformations, $\Sigma$-natural colax transformations and strict transformations are denoted by $[F,G]^\Sigma_l,[F,G]^\Sigma_c$ and $[F,G]_s$ respectively. We clearly have the following pullback square of inclusions.\[\begin{tikzcd}
	& {[F,G]_s} \\
	{[F,G]_c^\Sigma} && {[F,G]_l^\Sigma} \\
	& {[F,G]^\Sigma}
	\arrow[from=1-2, to=2-1]
	\arrow[from=1-2, to=2-3]
	\arrow["{U_c}"', from=2-1, to=3-2]
	\arrow["{U_l}", from=2-3, to=3-2]
	\arrow["\lrcorner"{anchor=center, pos=0.125, rotate=-45}, draw=none, from=1-2, to=3-2]
\end{tikzcd}\]

We will now give conditions on the functors $F$ and $G$ such that the inclusions in the above diagram are (co)reflective and such that these posets of transformations are complete. 

\begin{prop}\label{2.1}
Suppose $GA$ is cocomplete\footnote{Since $GA$ is a poset for every object $A$ in $\mathcal{C}$, $GA$ is also complete.} for all $A\in \mathrm{ob}\mathcal{C}$ and suppose $Gf$ preserves all joins for all $f\in \Sigma$. Then $[F,G]^\Sigma_l$ and $[F,G]^\Sigma$ are cocomplete and the inclusion $U_l:[F,G]^\Sigma_l\to [F,G]^\Sigma$ preserves all joins.
\end{prop}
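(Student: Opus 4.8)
The plan is to show that both posets are cocomplete by exhibiting all joins \emph{pointwise}, and then to observe that this pointwise description is forced, so that preservation of joins by $U_l$ comes for free. Recall that for a poset ``cocomplete'' just means ``has all joins'', including the empty join (a bottom element).

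First I would treat $[F,G]^\Sigma$. Given a family $(\tau^i)_{i\in I}$ of $\Sigma$-transformations, define $\tau$ by $\tau_A(x):=\bigvee_{i\in I}\tau^i_A(x)$ for each $A\in\ob\mathcal{C}$ and $x\in FA$, which makes sense since $GA$ is cocomplete. There are three things to check. (i) Each $\tau_A$ is a functor $FA\to GA$: for $u\colon x\to y$ in $FA$ and each $i$ we have $\tau^i_A(x)\leq\tau^i_A(y)$ since $\tau^i_A$ is a functor into a poset, and taking joins over $i$ gives $\tau_A(x)\leq\tau_A(y)$. (ii) $\tau$ is a $\Sigma$-transformation: for $f\colon A\to B$ in $\Sigma$ and $x\in FA$, $Gf(\tau_A(x))=Gf\big(\bigvee_i\tau^i_A(x)\big)=\bigvee_i Gf(\tau^i_A(x))=\bigvee_i\tau^i_B(Ff(x))=\tau_B(Ff(x))$, where the second equality is the hypothesis that $Gf$ preserves joins for $f\in\Sigma$ and the third uses that each $\tau^i$ is a $\Sigma$-transformation. (iii) $\tau$ is the join of the $\tau^i$: it is pointwise above each $\tau^i$, and any $\Sigma$-transformation $\sigma$ with $\sigma\geq\tau^i$ for all $i$ satisfies $\sigma_A(x)\geq\bigvee_i\tau^i_A(x)=\tau_A(x)$ pointwise, so $\sigma\geq\tau$. (The empty join is the transformation constant at the bottom elements; it is a $\Sigma$-transformation because $Gf$ preserves the empty join for $f\in\Sigma$.) Hence $[F,G]^\Sigma$ is cocomplete with joins computed pointwise.

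For $[F,G]^\Sigma_l$ I would take the \emph{same} pointwise join $\tau$ of a family $(\tau^i)_{i\in I}$ of lax $\Sigma$-transformations and check it is again lax. The equality on morphisms of $\Sigma$ is exactly step (ii) above. For an arbitrary $f\colon A\to B$ and $x\in FA$, monotonicity of $Gf$ gives $Gf(\tau_A(x))=Gf\big(\bigvee_i\tau^i_A(x)\big)\geq Gf(\tau^j_A(x))\geq\tau^j_B(Ff(x))$ for every $j$, the last inequality by laxness of $\tau^j$; taking the join over $j$ yields $Gf(\tau_A(x))\geq\bigvee_j\tau^j_B(Ff(x))=\tau_B(Ff(x))$, so $\tau$ is lax. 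Since $\tau$ then lies in the subposet $[F,G]^\Sigma_l$ and the universal property of step (iii) is inherited (any upper bound of the $\tau^i$ inside $[F,G]^\Sigma_l$ is in particular one inside $[F,G]^\Sigma$, hence dominates $\tau$), $[F,G]^\Sigma_l$ is cocomplete. Finally, because joins in $[F,G]^\Sigma_l$ are given by the very same pointwise formula as in $[F,G]^\Sigma$, the inclusion $U_l$ preserves all joins.

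I do not expect a genuine obstacle; the only thing requiring care is bookkeeping of where the hypotheses enter. Join-preservation of $Gf$ for $f\in\Sigma$ is used precisely to keep a pointwise join of $\Sigma$-transformations a $\Sigma$-transformation (including the empty-join/bottom case), whereas for the lax inequality on non-$\Sigma$ morphisms plain monotonicity of $Gf$ suffices — and it is worth making explicit that once joins in both posets are identified with the identical pointwise formula, preservation by $U_l$ is automatic rather than a separate verification.
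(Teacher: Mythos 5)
Your proof is correct and takes essentially the same route as the paper: joins are constructed pointwise, with join-preservation of $Gf$ for $f\in\Sigma$ giving the $\Sigma$-equality and mere monotonicity of $Gf$ giving the lax inequality for general $f$, after which preservation by $U_l$ is automatic. You are slightly more careful than the paper (empty join, functoriality of each $\tau_A$), and your inequality directions in the laxness check are the correct ones for ``lax'' as defined ($Gf\circ\tau_A\geq\tau_B\circ Ff$); the paper's displayed chain has them reversed, evidently a typo.
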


\begin{proof}
Let $(\lambda^i)_{i\in I}$ be a collection of $\Sigma$-natural transformations. Define for all $A\in \ob\mathcal{C}$ and $x\in FA$, \[\lambda_A(x):=\bigvee_{i\in I}\lambda^i_A(x).\]
For $f:A\to B\in \Sigma$ and $x\in FA$, \[Gf(\lambda_A(x))=Gf\left(\bigvee_{i\in I}\lambda^i_A(x)\right)= \bigvee_{i\in I}Gf(\lambda^i_A(x))= \bigvee_{i\in I}\lambda^i_B(Ff(x))= \lambda_B(Ff(x)).\]
Therefore $\lambda$ is a $\Sigma$-natural transformation and it straightforward to verify that this is the join of $(\lambda^i)_{i\in I}$ in $[F,G]$. If $\lambda^i$ is lax for every $i\in I$, then for a map $f:A\to B$ in $\mathcal{C}$ and $x\in FA$, 
\[Gf(\lambda_A(x))=Gf\left(\bigvee_{i\in I}\lambda^i_A(x)\right)\leq \bigvee_{i\in I}Gf(\lambda^i_A(x))\leq \bigvee_{i\in I}\lambda^i_B(Ff(x))= \lambda_B(Ff(x)).\]
Here we used that $Gf$ is order-preserving and that $\lambda^i$ is lax for all $i\in I$. It follows that $\lambda$ is a $\Sigma$-natural lax transformation. This is the join of $(\lambda^i)_{i\in I}$ and clearly $U_l$ preserves this join.
\end{proof}
Using Proposition \ref{2.1}, we immediately obtain the following corollary.
\begin{cor}\label{2.2}
    If $GA$ is cocomplete for all $A\in \ob\mathcal{C}$ and if $Gf$ preserves joins for all $f\in \Sigma$, then $U_l:[F,G]^\Sigma_l\to [F,G]^\Sigma$ has a right adjoint ${\color{BrickRed}R_l:[F,G]^\Sigma\to [F,G]^\Sigma_l}$. 
\end{cor}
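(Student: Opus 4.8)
The plan is to invoke the adjoint functor theorem for posets: a monotone map between complete lattices is a left adjoint precisely when it preserves all joins, and then to record the explicit formula for $R_l$ for later use. Proposition \ref{2.1} does almost all the work. Under the stated hypotheses each $GA$ is complete, hence (being a poset) also cocomplete, so Proposition \ref{2.1} applies and yields that both $[F,G]^\Sigma_l$ and $[F,G]^\Sigma$ are cocomplete, and that $U_l$ preserves all joins. A cocomplete poset is a complete lattice, so $U_l$ is a join-preserving map between complete lattices and therefore has a right adjoint $R_l$.

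Concretely, for a general $\Sigma$-transformation $\tau$ I would set
\[ R_l(\tau) := \bigvee \{\, \lambda \in [F,G]^\Sigma_l : U_l\lambda \leq \tau \,\}, \]
the join being taken in $[F,G]^\Sigma_l$, which exists by Proposition \ref{2.1} (the indexing set is nonempty, containing the bottom transformation $A\mapsto(x\mapsto \bot_{GA})$, which is lax since $Gf$ preserves the empty join for $f\in\Sigma$; alternatively one simply uses that $[F,G]^\Sigma_l$ has empty joins). Because $U_l$ preserves joins, $U_l R_l(\tau)$ equals the pointwise join $\bigvee\{\,U_l\lambda : U_l\lambda \leq \tau\,\}$ computed in $[F,G]^\Sigma$, and every term of this join lies below $\tau$, so $U_l R_l(\tau) \leq \tau$; that is, $R_l(\tau)$ is the largest lax $\Sigma$-transformation lying below $\tau$.

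Finally I would verify the adjunction $U_l \dashv R_l$, i.e.\ that for every lax $\Sigma$-transformation $\lambda$ and every general $\Sigma$-transformation $\tau$ one has $U_l\lambda \leq \tau$ if and only if $\lambda \leq R_l\tau$. If $U_l\lambda \leq \tau$, then $\lambda$ belongs to the set over which the join defining $R_l\tau$ is formed, so $\lambda \leq R_l\tau$. Conversely, if $\lambda \leq R_l\tau$, then $U_l\lambda \leq U_l R_l\tau \leq \tau$ by monotonicity of $U_l$ together with the previous paragraph.

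I do not expect a genuine obstacle here. The only point that needs care is that the join defining $R_l\tau$, computed a priori inside the subposet $[F,G]^\Sigma_l$, actually lands below $\tau$ back in $[F,G]^\Sigma$ — and this is exactly what the join-preservation clause of Proposition \ref{2.1} supplies. Everything else is the standard argument that a join-preserving map of complete lattices has a right adjoint.
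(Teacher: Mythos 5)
Your proposal is correct and is exactly the argument the paper intends: the paper derives Corollary \ref{2.2} ``immediately'' from Proposition \ref{2.1} via the adjoint functor theorem for posets (a join-preserving monotone map between complete lattices has a right adjoint), and your explicit formula $R_l(\tau)=\bigvee\{\lambda : U_l\lambda\leq\tau\}$ together with the verification of the adjunction is just the standard proof of that fact spelled out. No gaps.
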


Since $U_l$ is full and faithful, the unit is an equality, i.e. $\lambda=R_lU_l\lambda$ for a $\Sigma$-natural lax transformation $\lambda$. Clearly, we also have dual results for Proposition \ref{2.1} and Corollary \ref{2.2}. In particular, if $GA$ is cocomplete for all $A\in \ob\mathcal{C}$ and $Gf$ preserves meets for all $f\in \Sigma$, then $U_c:[F,G]^\Sigma_c\to [F,G]^\Sigma$ has a left adjoint ${\color{OliveGreen}L_c:[F,G]^\Sigma\to[F,G]^\Sigma_c}$ and $\sigma=L_cU_c\sigma$ for a $\Sigma$-natural colax transformation $\sigma$.

Assume from now on that $GA$ is (co)complete and that $Gf$ preserves all joins and meets for $f\in \Sigma$. Using these adjunctions we obtain operations that turn lax transformations into colax transformations and vice versa. For a $\Sigma$-natural lax transformation $\lambda$, we denote \[\overline{\lambda}:=L_cU_l(\lambda),\] and we call $\overline{\lambda}$ the \textit{colaxification of}$\lambda$. For a $\Sigma$-natural colax transformation $\sigma$, we write \[\underline{\sigma}:=R_lU_c(\sigma),\] and we call $\underline{\sigma}$ the \textit{laxification of }$\sigma$.

In what follows we will often omit the forgetful functors. Using this convention we find that for a strict transformation $\tau$,  $$\underline{\tau}=\tau=\overline{\tau}.$$

The map $\overline{(-)}:[F,G]^\Sigma_l\to [F,G]^\Sigma_c$ is left adjoint to $\underline{(-)}:[F,G]^\Sigma_c\to [F,G]^\Sigma_l$. The following diagram summarizes this; 
\[\begin{tikzcd}
	{[F_,G]_l^\Sigma} && {[F,G]^\Sigma} && {[F,G]_c^\Sigma}
	\arrow[""{name=0, anchor=center, inner sep=0}, "{U_l}", shift left=2, from=1-1, to=1-3]
	\arrow[""{name=1, anchor=center, inner sep=0}, "{L_c}", shift left=2, color=OliveGreen, from=1-3, to=1-5]
	\arrow[""{name=2, anchor=center, inner sep=0}, "{U_c}", shift left=2, from=1-5, to=1-3]
	\arrow[""{name=3, anchor=center, inner sep=0}, "{R_l}", shift left=2, color=BrickRed, from=1-3, to=1-1]
	\arrow["{\overline{(-)}}"{description}, shift left=2, curve={height=-18pt}, from=1-1, to=1-5]
	\arrow["{\underline{(-)}}"{description}, shift left=5, curve={height=-12pt}, from=1-5, to=1-1]
	\arrow["\dashv"{anchor=center, rotate=-90}, draw=none, from=0, to=3]
	\arrow["\dashv"{anchor=center, rotate=-90}, draw=none, from=1, to=2]
\end{tikzcd}\]

The monad or closure operator on $[F,G]^\Sigma_l$ induced by this adjunction is denoted by $T$ and the induced comonad or interior operator on $[F,G]^\Sigma_c$ is denoted by $S$. 

The following proposition discusses the situation where (co)laxifications of $\Sigma$-natural co(lax) transformations are strict.
\begin{prop}\label{strictness}

The following are equivalent: 
\begin{enumerate}
    \item $\overline{\lambda}$ is strict for all $\lambda\in [F,G]^\Sigma_l$,
    \item $([F,G]^\Sigma_l)^T=[F,G]_s$,
    \item $\underline{\sigma}$ is strict for all $\sigma\in [F,G]^\Sigma_c$
    \item $([F,G]_c^\Sigma)^S=[F,G]_s$
\end{enumerate}
\end{prop}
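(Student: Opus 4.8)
The plan is to replace the Eilenberg--Moore (co)algebras by fixed-point sub-posets and then run the cycle $(1)\Rightarrow(2)\Rightarrow(3)\Rightarrow(4)\Rightarrow(1)$, deducing the last two implications from the first two by duality. The reduction to fixed points is immediate: since $[F,G]^\Sigma_l$ and $[F,G]^\Sigma_c$ are posets, all (co)algebra axioms hold automatically, a $T$-algebra structure on $\lambda$ is nothing but a witness of $T\lambda\le\lambda$, and the monad unit gives $\lambda\le T\lambda$, so $([F,G]^\Sigma_l)^T$ is the sub-poset $\{\lambda : T\lambda=\lambda\}$; dually $([F,G]^\Sigma_c)^S=\{\sigma : S\sigma=\sigma\}$. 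Moreover $L_c$ (resp.\ $R_l$) sends a general $\Sigma$-transformation to the least colax (resp.\ greatest lax) $\Sigma$-transformation above (resp.\ below) it, so a strict $\mu$ satisfies $\overline{\mu}=\mu=\underline{\mu}$. Hence $[F,G]_s$ always lies inside both fixed-point posets, and (2) and (4) amount to the inclusions $\{\lambda : T\lambda=\lambda\}\subseteq[F,G]_s$ and $\{\sigma : S\sigma=\sigma\}\subseteq[F,G]_s$.

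The key lemma is that $\underline{\sigma}$ is a fixed point of $T$ for \emph{every} colax $\sigma$, and dually $\overline{\lambda}$ is a fixed point of $S$ for every lax $\lambda$ (equivalently, $\{\lambda : T\lambda=\lambda\}$ is the image of $\underline{(-)}$ and $\{\sigma : S\sigma=\sigma\}$ the image of $\overline{(-)}$). For the first half, apply the monotone functor $\underline{(-)}$ to the counit $S\sigma\le\sigma$ of $S$ to get $T\underline{\sigma}=\underline{S\sigma}\le\underline{\sigma}$ (here $\overline{\underline{\sigma}}=S\sigma$), while the unit of $T$ gives $\underline{\sigma}\le T\underline{\sigma}$; so $T\underline{\sigma}=\underline{\sigma}$. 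With this in hand, $(1)\Rightarrow(2)$ is quick: if $T\lambda=\lambda$ then $\lambda=\underline{\overline{\lambda}}$, and if $\overline{\lambda}$ is strict then $\underline{\overline{\lambda}}=\overline{\lambda}$, so $\lambda$ is strict. And $(2)\Rightarrow(3)$ is instant: for colax $\sigma$ the key lemma puts $\underline{\sigma}$ into $\{\lambda : T\lambda=\lambda\}=[F,G]_s$, so $\underline{\sigma}$ is strict.

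Finally, $(3)\Rightarrow(4)$ and $(4)\Rightarrow(1)$ follow from $(1)\Rightarrow(2)$ and $(2)\Rightarrow(3)$ by the order-reversing duality that replaces each $GA$ by $GA^{\mathrm{op}}$ (the hypotheses are symmetric under this operation): it exchanges lax with colax, $U_l$ with $U_c$ and $R_l$ with $L_c$, hence $\overline{(-)}$ with $\underline{(-)}$ and the monad $T$ with the comonad $S$, fixing $[F,G]_s$ and exchanging statement (1) with (3) and (2) with (4). (Alternatively one can prove these two implications directly, mirroring the two arguments above.) I expect the only real subtlety to be bookkeeping: keeping track of whether a transformation is being regarded as lax, colax, strict or general as it passes through $U_l,U_c,R_l,L_c$, and in particular invoking repeatedly that $R_l$ restricts to the identity on lax transformations and $L_c$ to the identity on colax transformations --- i.e.\ that the unit of $U_l\dashv R_l$ and the counit of $L_c\dashv U_c$ are equalities, as noted in the text. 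Once that is handled, the argument is purely formal, using only the (co)unit inequalities for poset (co)monads, monotonicity of $\overline{(-)}$ and $\underline{(-)}$, and the action of $\overline{(-)},\underline{(-)}$ on strict transformations.
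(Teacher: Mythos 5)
Your proposal is correct and follows essentially the same route as the paper: it establishes the cycle $(1)\Rightarrow(2)\Rightarrow(3)\Rightarrow(4)\Rightarrow(1)$, using that strict transformations are (co)algebras, that a $T$-algebra $\lambda$ satisfies $\lambda=\underline{\overline{\lambda}}=\overline{\lambda}$ when $\overline{\lambda}$ is strict, and that $T\underline{\sigma}=\underline{\sigma}$ (which the paper asserts and you justify via the triangle identity), with the remaining two implications obtained by the evident duality that the paper writes out explicitly. Your added explicitness about Eilenberg--Moore algebras over a poset being fixed points is a useful clarification but not a different argument.
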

\begin{proof}
$\underline{1\Rightarrow 2}$: For a strict transformation $\tau$, we have that $T\tau= \underline{(\overline{\tau})}=\underline{\tau}=\tau$, making it a $T$-algebra (or $T$-closed element). For a $T$-algebra (or $T$-closed element) $\lambda$, we have that $T\lambda=\lambda$. By the hypothesis, we also have that the laxifcation of $\overline{\lambda}$ is again $\overline{\lambda}$, i.e. $T\lambda =\overline{\lambda}$. Combining this gives that $\lambda=\overline{\lambda}$, showing that $\lambda$ is strict. 
$\underline{2\Rightarrow 3}$: Because $\underline{\sigma}=T\underline{\sigma}$, it follows that $\underline{\sigma}$ is a $T$-algebra and therefore strict, by the hypothesis.

$\underline{3\Rightarrow 4}$: This is similar to the proof of the implication $1\Rightarrow 2$. Every strict transformation $\tau$ is a coalgebra since $S\tau =\tau$. For an $S$-coalgebra (or $S$-open element) $\sigma$, we have $\underline{\sigma}=S\sigma = \sigma$. Therefore $\sigma$ is lax and thus strict. 

$\underline{4\Rightarrow 1}$: Similarly to the proof of the implication $2\Rightarrow 3$, it follows from the fact that $\overline{\lambda}=S\overline{\lambda}$. 
\end{proof}
We say that the triple $(F,G,\Sigma)$ satisfies the \textbf{strictness condition} if it satisfies the conditions in Proposition \ref{strictness}. In this case the inclusion $[F,G]_s\to [F,G]^\Sigma_l$ has a left adjoint ${\color{OliveGreen}L:[F,G]^\Sigma_l\to [F,G]_s}$ and that the inclusion $[F,G]_s\to [F,G]^\Sigma_c$ has a right adjoint  ${\color{BrickRed}R:[F,G]^\Sigma_c\to [F,G]_s}$. These operations turn $\Sigma$-natural (co)lax transformations into strict transformations in a universal way.

If the strictness condition is satisfied, then the poset of strict transformations is also complete and we can describe what joins and meets look like. This is explained in the following corollary.

\begin{cor}\label{2.4}
Suppose that $GA$ is complete for all $A\in \ob\mathcal{C}$ and that $Gf$ preserves joins for all $f\in \Sigma$. Suppose that $(F,G,\Sigma)$ satisfies the strictness condition. Let $(\tau^i)_{i\in I}$ be a collection in $[F,G]_s^\Sigma$. Then their join exists and is given by \[\bigvee_{i\in I} \tau^i=\overline{\lambda},\]
where $\lambda_A(x):=\bigvee_{i\in I}\tau^i_A(x)$ for all $A\in \ob\mathcal{C}$ and $x\in FA$.
\end{cor}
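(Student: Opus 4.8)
The plan is to show directly that $\overline{\lambda}$ has the universal property of the join of $(\tau^i)_{i\in I}$ in $[F,G]_s$, using Proposition~\ref{2.1} and the adjunction $L_c\dashv U_c$. Throughout I use that $[F,G]^\Sigma_l$, $[F,G]^\Sigma_c$, $[F,G]^\Sigma$ and $[F,G]_s$ all carry the pointwise order and that $U_l,U_c$ are order-embeddings, so that inequalities between (co)lax and strict transformations may be compared in $[F,G]^\Sigma$ without ambiguity.

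First I would pin down the candidate. Each $\tau^i$ is strict, hence lax, so $\tau^i\in[F,G]^\Sigma_l$. Since every $GA$ is complete, hence cocomplete, and $Gf$ preserves joins for $f\in\Sigma$, the construction in the proof of Proposition~\ref{2.1} shows that the pointwise map $\lambda$ with $\lambda_A(x)=\bigvee_{i\in I}\tau^i_A(x)$ is a lax $\Sigma$-transformation and is the join $\bigvee_{i}\tau^i$ computed in $[F,G]^\Sigma_l$ (equivalently in $[F,G]^\Sigma$). By the strictness condition $\overline{\lambda}=L_cU_l\lambda$ is strict, hence $\overline{\lambda}\in[F,G]_s$; this is the claimed join.

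Then I would check the two halves of the universal property. For the upper bound: the unit of $L_c\dashv U_c$ gives $\mu\le L_c\mu$ for every general transformation $\mu$, so applying this to $\mu=U_l\lambda$ gives $\lambda\le\overline{\lambda}$, and since $\lambda\ge\tau^i$ for all $i$ also $\overline{\lambda}\ge\tau^i$. For minimality, let $\nu\in[F,G]_s$ satisfy $\nu\ge\tau^i$ for all $i$; then $\nu\ge\lambda$, because $\lambda$ is the pointwise supremum. Applying the order-preserving map $\overline{(-)}=L_cU_l$ gives $\overline{\nu}\ge\overline{\lambda}$. But $\nu$ is strict, hence colax, so $U_l\nu=U_c\nu$ and $\overline{\nu}=L_cU_l\nu=L_cU_c\nu=\nu$ by the identity $\sigma=L_cU_c\sigma$ for colax $\sigma$. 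Hence $\nu\ge\overline{\lambda}$, which proves $\bigvee_i\tau^i=\overline{\lambda}$ in $[F,G]_s$.

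I do not expect a genuine obstacle here; as the text anticipates, the statement is a short consequence of the properties of left adjoints. The only point needing care is the bookkeeping across the four posets, so that the two facts used about $\overline{(-)}$ --- namely $\lambda\le\overline{\lambda}$ for lax $\lambda$ and $\overline{\nu}=\nu$ for strict $\nu$ --- are applied with the correct variance. Conceptually, the content is just that the monad $T=\underline{\overline{(-)}}$, which under the strictness condition sends $\lambda$ to $\overline{\lambda}$, is the reflector onto its algebras $[F,G]_s$, so that a join in $[F,G]_s$ is obtained by applying $\overline{(-)}$ to the corresponding join in $[F,G]^\Sigma_l$ --- and the latter is pointwise by Proposition~\ref{2.1}.
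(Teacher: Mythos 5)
Your proof is correct and is exactly the argument the paper has in mind (the paper omits it, saying only that it follows from properties of left adjoints): you verify that $\overline{(-)}=L_cU_l$ is the reflector of $[F,G]^\Sigma_l$ onto $[F,G]_s$ via the unit inequality $\lambda\leq\overline{\lambda}$ and the fixed-point identity $\overline{\nu}=\nu$ for strict $\nu$, and then apply it to the pointwise join from Proposition~\ref{2.1}. The only bookkeeping remark is that forming $\lambda$ as a lax $\Sigma$-transformation uses that $Gf$ preserves \emph{joins} for $f\in\Sigma$, which is part of the section's standing assumptions even though the corollary's statement only repeats the meet-preservation hypothesis.
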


In the case that $(F,G,\Sigma)$ satisfies the strictness condition, we can summarize this section by the following diagram.
\[\begin{tikzcd}
	& {[F,G]_s} \\
	{[F,G]^\Sigma_c} && {[F,G]^\Sigma_l} \\
	& {[F,G]^\Sigma}
	\arrow[""{name=0, anchor=center, inner sep=0}, shift left=2, from=1-2, to=2-3]
	\arrow[""{name=1, anchor=center, inner sep=0}, "{U_l}", shift left=2, from=2-3, to=3-2]
	\arrow[""{name=2, anchor=center, inner sep=0}, shift left=2, from=1-2, to=2-1]
	\arrow[""{name=3, anchor=center, inner sep=0}, "{U_c}", shift left=2, from=2-1, to=3-2]
	\arrow["\lrcorner"{anchor=center, pos=0.125, rotate=-45}, draw=none, from=1-2, to=3-2]
	\arrow[""{name=4, anchor=center, inner sep=0}, "{R_l}", shift left=2, color=BrickRed, from=3-2, to=2-3]
	\arrow[""{name=5, anchor=center, inner sep=0}, "{L_c}", shift left=2, color=OliveGreen, from=3-2, to=2-1]
	\arrow[""{name=6, anchor=center, inner sep=0}, "L", shift left=2, color=OliveGreen, from=2-3, to=1-2]
	\arrow[""{name=7, anchor=center, inner sep=0}, "R", shift left=2, color=BrickRed, from=2-1, to=1-2]
	\arrow["\dashv"{anchor=center, rotate=51}, draw=none, from=5, to=3]
	\arrow["\dashv"{anchor=center, rotate=128}, draw=none, from=1, to=4]
	\arrow["\dashv"{anchor=center, rotate=51}, draw=none, from=6, to=0]
	\arrow["\dashv"{anchor=center, rotate=128}, draw=none, from=2, to=7]
\end{tikzcd}\]
\section{Extensions of transformations}\label{3}
In this section, we will focus on extensions of (co)lax and strict $\Sigma$-natural transformations. We will give results on their existence and their properties. In Section \ref{5}, we will give concrete constructions of these extensions in particular cases. We will apply this to extend premeasures to measures in Section \ref{7}, using the representation of inner and outer premeasures as $\Sigma$-natural transformations from Section \ref{6}. 

Let $\mathcal{C}$ be a small poset-enriched category and fix a collection $\Sigma$ of morphisms in $\mathcal{C}$. Let $F,G,H$ be enriched functors $\mathcal{C}\to \textbf{Pos}$ and assume that $GA$ is a complete for all $A\in\ob\mathcal{C}$ and that $Gf$ preserves joins and meets for all $f\in \Sigma$. Furthermore,  let $\iota:F\to H$ be a strict transformation.

Let $\lambda:F\to G$ be a $\Sigma$-natural general transformation (resp. lax, colax, strict). The \textbf{left extension of $\lambda$ along $\iota$} is a $\Sigma$-natural general transformation (resp. lax, colax, strict) $\mathrm{Lan}_\iota\lambda:H\to G$ such that $\textup{Lan}_\iota\lambda\circ \iota \geq \lambda$ and such that for every other $\lambda':H\to G$ with $\lambda'\circ \iota\geq \lambda$, we have $\lambda'\geq \mathrm{Lan}_\iota\lambda$;\footnote{The universal property determines the transformation, therefore we can talk about \emph{the} left extension.} 

\[\begin{tikzcd}
	F && G \\
	{} \\
	H
	\arrow[""{name=0, anchor=center, inner sep=0}, "\lambda"{description}, from=1-1, to=1-3]
	\arrow["\iota"{description}, from=1-1, to=3-1]
	\arrow[""{name=1, anchor=center, inner sep=0}, "{\mathrm{Lan}_\iota\lambda}"{description}, from=3-1, to=1-3]
	\arrow[""{name=2, anchor=center, inner sep=0}, "{\lambda'}"{description}, curve={height=30pt}, from=3-1, to=1-3]
	\arrow["\geq"{marking}, draw=none, from=3-1, to=0]
	\arrow["\leq"{marking}, draw=none, from=2, to=1]
\end{tikzcd}\]
We say that the extension is \textbf{proper} if $\mathrm{Lan}_\iota\lambda\circ \iota = \lambda$ and we call the extension \textbf{objectwise} if for all $A\in\ob\mathcal{C}$, \[(\mathrm{Lan}_\iota\lambda)_A=\mathrm{Lan}_{\iota_A}\lambda_A.\] Dually, we can define the \textbf{right extension of $\lambda$ along $\iota$}. 

If an extension is objectwise, then we can reduce everything to Kan extensions of order-preserving maps. This gives us for example the following lemma.
\begin{lem}\label{propernesslemma}
If the left extension of $\lambda$ along $\iota$ is objectwise and if $\iota_A$ is full and faithful for every $A\in \ob\mathcal{C}$, then the extension is proper.
\end{lem}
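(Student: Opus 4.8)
The plan is to use the objectwiseness hypothesis to reduce the claim to a statement about ordinary left Kan extensions of functors valued in the posets $GA$, and then to invoke the classical fact that a left Kan extension along a fully faithful functor restricts back to the original functor.

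First I would unwind the definitions. Composition of transformations is computed componentwise, so $(\mathrm{Lan}_\iota\lambda\circ\iota)_A=(\mathrm{Lan}_\iota\lambda)_A\circ\iota_A$ for every $A\in\ob\mathcal{C}$, and hence the claimed equality $\mathrm{Lan}_\iota\lambda\circ\iota=\lambda$ is equivalent to $(\mathrm{Lan}_\iota\lambda)_A\circ\iota_A=\lambda_A$ for each $A$. By objectwiseness, $(\mathrm{Lan}_\iota\lambda)_A=\mathrm{Lan}_{\iota_A}\lambda_A$, the ordinary left Kan extension of the functor $\lambda_A:FA\to GA$ along $\iota_A:FA\to HA$. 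Since $GA$ is a complete poset it is also cocomplete, so this Kan extension exists and is pointwise: for $h\in HA$,
\[(\mathrm{Lan}_{\iota_A}\lambda_A)(h)=\bigvee_{(x,\,\phi:\iota_A x\to h)}\lambda_A(x),\]
the join ranging over the objects of the comma category $\iota_A\downarrow h$.

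Next I would specialise to $h=\iota_A(x_0)$ for $x_0\in FA$ and exploit full faithfulness of $\iota_A$. On the one hand, $(x_0,\mathrm{id}_{\iota_A x_0})$ is an object of $\iota_A\downarrow\iota_A x_0$, so $\lambda_A(x_0)\le(\mathrm{Lan}_{\iota_A}\lambda_A)(\iota_A x_0)$. On the other hand, for any object $(x,\phi:\iota_A x\to\iota_A x_0)$ of that comma category, full faithfulness produces a (unique) morphism $\psi:x\to x_0$ with $\iota_A\psi=\phi$, and applying $\lambda_A$ gives $\lambda_A(x)\le\lambda_A(x_0)$; thus every term of the join lies below $\lambda_A(x_0)$, and so does the join. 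Hence $(\mathrm{Lan}_{\iota_A}\lambda_A)(\iota_A x_0)=\lambda_A(x_0)$ — equivalently, $(x_0,\mathrm{id})$ is terminal in $\iota_A\downarrow\iota_A x_0$, so the colimit defining the Kan extension there is attained at it. As $x_0$ was arbitrary this gives $\mathrm{Lan}_{\iota_A}\lambda_A\circ\iota_A=\lambda_A$, and assembling over $A$ yields $\mathrm{Lan}_\iota\lambda\circ\iota=\lambda$, i.e. the extension is proper.

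I do not expect a genuine obstacle here; the only points requiring care are that the "objectwise" hypothesis really does supply the pointwise functor Kan extension componentwise (so that the join formula is available and applies), and that the lax, colax, or strict decoration carried by $\lambda$ plays no role — properness is a statement about the underlying families of maps, and componentwise composition does not see the decoration, so the computation above works uniformly in all three cases.
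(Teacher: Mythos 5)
Your proof is correct and follows essentially the same route as the paper: reduce via objectwiseness to the componentwise Kan extension $\mathrm{Lan}_{\iota_A}\lambda_A$ and invoke the standard fact that a (pointwise) left Kan extension along a fully faithful functor restricts back to the original functor. The only difference is that you spell out this standard fact via the join formula over the comma category $\iota_A\downarrow \iota_A x_0$, whereas the paper simply cites it.
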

\begin{proof}
Because $\iota_A$ is full and faithful for all $A\in \ob\mathcal{C}$, we know by Corollary 6.3.9 in \cite{riehl} that $(\mathrm{Lan}_{\iota_A}\lambda_A)\circ \iota_A=\lambda_A$ for all $A \in \ob \mathcal{C}$. Because the extensions of $\lambda$ along $\iota$ is objectwise, we conclude that \[(\mathrm{Lan}_\iota\lambda)\circ \iota=\lambda.\] Therefore the extension is proper.
\end{proof}
Just as for Kan extensions of functors, extending transformations is adjoint to restricting transformations. The proof is essentially the same as the result for Kan extensions of functors (see for example Proposition 6.1.5 in \cite{riehl}).
\begin{lem}\label{lem}
If the restriction map $-\circ \iota: [H,G]_\bullet\to [F,G]_\bullet$ has a right (resp. left) adjoint, then the right (resp. left) extension of $\lambda$ along $\iota$ exists for all $\lambda \in [F,G]_\bullet$. Moreover, the right (resp. left) adjoint is given by $\mathrm{Ran}_\iota-$ (resp. $\mathrm{Lan}_\iota-)$.
\end{lem}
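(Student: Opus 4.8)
The plan is to observe that, because every hom-object in sight is a poset, an adjunction between $[H,G]_\bullet$ and $[F,G]_\bullet$ is just a Galois connection, and the universal properties defining $\mathrm{Ran}_\iota\lambda$ and $\mathrm{Lan}_\iota\lambda$ are literally the defining conditions of the two sides of such a connection. So the whole argument is short order-theoretic bookkeeping; in particular there is no naturality or $2$-cell compatibility to check, and this is just the poset shadow of Proposition 6.1.5 in \cite{riehl}.

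Concretely, suppose $-\circ\iota:[H,G]_\bullet\to[F,G]_\bullet$ has a right adjoint $R$. Since both sides are posets, this says exactly that $\lambda'\circ\iota\le\lambda$ in $[F,G]_\bullet$ if and only if $\lambda'\le R\lambda$ in $[H,G]_\bullet$, for all $\lambda'\in[H,G]_\bullet$ and $\lambda\in[F,G]_\bullet$. I would then verify that $R\lambda$ has the universal property of the right extension: taking $\lambda'=R\lambda$ (and using $R\lambda\le R\lambda$) gives $R\lambda\circ\iota\le\lambda$; and any $\lambda'$ with $\lambda'\circ\iota\le\lambda$ satisfies $\lambda'\le R\lambda$ by the equivalence. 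Hence $\mathrm{Ran}_\iota\lambda$ exists and equals $R\lambda$ for every $\lambda$, so the assignment $\lambda\mapsto\mathrm{Ran}_\iota\lambda$ is the functor $R$, which is the ``moreover'' clause. The left case is dual: if $-\circ\iota$ has a left adjoint $L$, then $L\lambda\le\lambda'$ if and only if $\lambda\le\lambda'\circ\iota$, and the same two-line check shows $L\lambda$ is the least $\lambda'$ with $\lambda\le\lambda'\circ\iota$, i.e. $L\lambda=\mathrm{Lan}_\iota\lambda$.

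I do not expect a real obstacle. The only points needing a moment's care are that the $\lambda'$ quantified in the universal property of the extension ranges over the same subposet $[H,G]_\bullet$ in which $R$ (resp. $L$) lands — which is exactly how the extension was defined — and that the lax/colax/strict decoration is read consistently on both sides, so that a lax (resp. colax, strict) $\lambda$ is compared inside $[F,G]^\Sigma_l$ (resp. $[F,G]^\Sigma_c$, $[F,G]_s$) and its adjoint lies in the corresponding $[H,G]$-poset. Both are immediate from the setup.
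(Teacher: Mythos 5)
Your proof is correct and is essentially the paper's own argument: the paper simply defers to Proposition 6.1.5 of Riehl, and your Galois-connection unpacking is precisely that proof specialized to the poset-enriched setting, where the unit/counit of the adjunction become the two inequalities of the universal property. One caveat worth noting: the paper's stated definition of the left extension asks that $\lambda'\leq \mathrm{Lan}_\iota\lambda$ for every competitor $\lambda'$ with $\lambda'\circ\iota\geq\lambda$ (i.e.\ the \emph{greatest} such), which is opposite to the least-element characterization your Galois connection delivers; since the paper's later uses of the universal property (e.g.\ Proposition \ref{3.9} and the counterexample to left Carath\'eodory extensions) all require the ``least'' reading you adopt, this is evidently a typo in the definition rather than a gap in your argument.
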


In the rest of this section we will look at conditions for extensions to exist and for them to be proper or objectwise. Extensions of $\Sigma$-natural general transformations always exist and are the best behaved.
\begin{prop}\label{3.3}
The right and left extension of $\tau$ along $\iota$ exists for every $\Sigma$-natural general transformation $\tau\in [F,G]^\Sigma$.
\end{prop}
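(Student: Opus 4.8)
The plan is to construct both extensions directly, as the appropriate suprema and infima of $\Sigma$-transformations. Two facts do all the work. First, $[H,G]^\Sigma$ is a complete lattice in which joins and meets are computed pointwise: this is Proposition~\ref{2.1} and its dual, applied with $H$ in place of $F$, and it uses precisely the standing assumptions that $GA$ is complete (equivalently, being a poset, cocomplete) and that $Gf$ preserves joins and meets for every $f\in\Sigma$. Second, the restriction map $-\circ\iota\colon[H,G]^\Sigma\to[F,G]^\Sigma$ preserves all joins and all meets, since $\iota$ is a \emph{strict} transformation, so $(\mu\circ\iota)_A=\mu_A\circ\iota_A$ and precomposition with the fixed maps $\iota_A$ commutes with the pointwise joins and meets just described.

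Granting these, I would set $\mathrm{Lan}_\iota\tau:=\bigvee\{\mu\in[H,G]^\Sigma : \mu\circ\iota\geq\tau\}$ and $\mathrm{Ran}_\iota\tau:=\bigwedge\{\mu\in[H,G]^\Sigma : \mu\circ\iota\leq\tau\}$, which exist by the first fact. The universal properties hold by construction: any $\mu'$ with $\mu'\circ\iota\geq\tau$ lies in the indexing set, hence $\mu'\leq\mathrm{Lan}_\iota\tau$, and dually for $\mathrm{Ran}_\iota$. What must be checked is that these transformations lie in their own indexing sets, i.e.\ that $\mathrm{Lan}_\iota\tau\circ\iota\geq\tau$ and $\mathrm{Ran}_\iota\tau\circ\iota\leq\tau$. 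By the second fact, $\mathrm{Lan}_\iota\tau\circ\iota$ is the join of the family $\{\mu\circ\iota\}$, every term of which is $\geq\tau$; provided the indexing set is nonempty, this forces $\mathrm{Lan}_\iota\tau\circ\iota\geq\tau$, and dually for the meet.

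So the one point needing care --- and the only place where the hypotheses on $G$ really enter this proposition --- is that the two indexing sets are nonempty. This is because the constant collections $A\mapsto\top_{GA}$ and $A\mapsto\bot_{GA}$ are themselves $\Sigma$-transformations $H\to G$: for $f\in\Sigma$ one has $Gf(\top_{GA})=\top_{GB}$ and $Gf(\bot_{GA})=\bot_{GB}$, since $Gf$ preserves the empty meet and the empty join. The pointwise-top transformation satisfies $\top\circ\iota\geq\tau$ and the pointwise-bottom one satisfies $\bot\circ\iota\leq\tau$, so both indexing sets are nonempty and the proof is complete. (Equivalently, the two facts say $-\circ\iota$ is a join- and meet-preserving map of complete lattices, hence has both a right and a left adjoint, and one invokes Lemma~\ref{lem}.) I expect no real obstacle: the proposition imposes no condition at all on $F$, $H$, or $\iota$ beyond strictness of $\iota$, which is exactly why general transformations are the best-behaved case --- at the price that the extensions so produced are in general neither proper nor objectwise.
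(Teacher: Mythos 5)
Your high-level strategy is the paper's: show that $[H,G]^\Sigma$ and $[F,G]^\Sigma$ are complete with pointwise joins and meets (Proposition~\ref{2.1} and its dual) and that $-\circ\iota$ preserves both, so that it has a left and a right adjoint, and conclude via Lemma~\ref{lem}. Your two preparatory facts are exactly right, and your parenthetical remark at the end is the paper's entire proof. But the explicit formulas you write down are inverted, and as written they do not produce the extensions. The left adjoint of the meet-preserving map $-\circ\iota$ is $\tau\mapsto\bigwedge\{\mu:\mu\circ\iota\geq\tau\}$, and the right adjoint of the join-preserving map $-\circ\iota$ is $\tau\mapsto\bigvee\{\mu:\mu\circ\iota\leq\tau\}$; membership of these in their own indexing sets is precisely what meet- (resp.\ join-) preservation buys. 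Your $\bigvee\{\mu:\mu\circ\iota\geq\tau\}$ is, by your own nonemptiness argument, a join over a set containing the pointwise-top transformation, hence is identically $\top$; dually your $\mathrm{Ran}_\iota\tau$ is identically $\bot$. These are not the Kan extensions: $\mathrm{Lan}_\iota\tau$ must be the \emph{least} $\mu$ with $\mu\circ\iota\geq\tau$ and $\mathrm{Ran}_\iota\tau$ the \emph{greatest} $\mu$ with $\mu\circ\iota\leq\tau$. This is the convention forced by Lemma~\ref{lem} (which identifies $\mathrm{Lan}_\iota-$ with the \emph{left} adjoint of restriction), by the proof of Proposition~\ref{3.9}, and by the left-Carath\'eodory counterexample, where the universal property is used to deduce $\mu\leq\mu_r$. (The prose of the definition in Section~\ref{3}, which reads ``we have $\lambda'\leq\mathrm{Lan}_\iota\lambda$,'' has the inequality reversed and may be what misled you; taken literally it would make $\top$ the left extension of everything and render properness false for all $\tau\neq\top$, contradicting the later results.)

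The fix is short: swap $\bigvee$ and $\bigwedge$ in your two definitions. Then $\mathrm{Lan}_\iota\tau\circ\iota=\bigwedge\{\mu\circ\iota:\mu\circ\iota\geq\tau\}\geq\tau$ because $-\circ\iota$ preserves meets, every competitor $\mu'$ with $\mu'\circ\iota\geq\tau$ satisfies $\mathrm{Lan}_\iota\tau\leq\mu'$ by construction of the meet, and dually for $\mathrm{Ran}_\iota\tau$ using join-preservation; your observation that $\top$ and $\bot$ are $\Sigma$-transformations still serves to show the indexing sets are nonempty. After this correction your argument is a correct unwinding of the adjoint functor theorem for posets and coincides with the paper's proof.
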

\begin{proof}
It is clear by the proof of Proposition \ref{2.1} and its dual, that $[F,G]^\Sigma$ and $[H,G]^\Sigma$ are complete and that $-\circ \iota$ preserves all joins and meets. It follows now that the restriction map $-\circ \kappa$ has a left and a right adjoint. The claim now follows from Lemma \ref{lem}.\end{proof}
For $\Sigma$-natural lax transformations things become more difficult. However, we still have that right extensions exist.
\begin{prop}\label{3.4}
The right extension of $\lambda$ along $\iota$ exists for all $\lambda\in [F,G]_l^\Sigma$.
\end{prop}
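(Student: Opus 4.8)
The plan is to exhibit a right adjoint to the restriction functor $-\circ\iota:[H,G]^\Sigma_l\to[F,G]^\Sigma_l$ and then invoke Lemma \ref{lem}. This right adjoint will be built by conjugating the right adjoint $\mathrm{Ran}_\iota$ on \emph{general} $\Sigma$-transformations (available by Proposition \ref{3.3} and its proof) with the coreflection $R_l$ of lax $\Sigma$-transformations inside general ones (available by Corollary \ref{2.2}, whose proof only uses the standing hypotheses on $G$ and $\Sigma$ and so applies equally with domain $H$).

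First I would record two easy compatibilities. Since $\iota$ is strict, precomposition with $\iota$ sends lax $\Sigma$-transformations to lax $\Sigma$-transformations: for $f:A\to B$ in $\mathcal{C}$ and $\mu\in[H,G]^\Sigma_l$,
\[Gf\circ(\mu\circ\iota)_A=Gf\circ\mu_A\circ\iota_A\geq\mu_B\circ Hf\circ\iota_A=\mu_B\circ\iota_B\circ Ff=(\mu\circ\iota)_B\circ Ff,\]
with equality when $f\in\Sigma$ because $\mu$ is then strict on $f$; and the same identity of underlying general transformations shows that the square of inclusions $U_l$ and restrictions $-\circ\iota$ commutes:
\[\begin{tikzcd}
	{[H,G]^\Sigma_l} & {[F,G]^\Sigma_l} \\
	{[H,G]^\Sigma} & {[F,G]^\Sigma}
	\arrow["{-\circ\iota}", from=1-1, to=1-2]
	\arrow["{U_l}"', from=1-1, to=2-1]
	\arrow["{U_l}", from=1-2, to=2-2]
	\arrow["{-\circ\iota}"', from=2-1, to=2-2]
\end{tikzcd}\]

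Then I would claim that $\mathrm{Ran}_\iota\lambda:=R_l\,\mathrm{Ran}_\iota\,U_l\lambda$ defines a right adjoint $[F,G]^\Sigma_l\to[H,G]^\Sigma_l$ to $-\circ\iota$. For $\mu\in[H,G]^\Sigma_l$ and $\lambda\in[F,G]^\Sigma_l$ one simply chains the order-equivalences
\[\mu\leq R_l\,\mathrm{Ran}_\iota\,U_l\lambda\iff U_l\mu\leq\mathrm{Ran}_\iota\,U_l\lambda\iff(U_l\mu)\circ\iota\leq U_l\lambda\iff\mu\circ\iota\leq\lambda,\]
using $U_l\dashv R_l$, then $(-\circ\iota)\dashv\mathrm{Ran}_\iota$ on general $\Sigma$-transformations, then the commuting square above together with the fact (noted after Corollary \ref{2.2}) that $U_l$ is full and faithful, hence order-reflecting. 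By Lemma \ref{lem} the right extension of every $\lambda\in[F,G]^\Sigma_l$ then exists.

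The argument is almost entirely formal once the ingredients are assembled; the one point to watch — and the structural reason the statement concerns only \emph{right} extensions of lax transformations — is that the conjugation relies on $U_l$ having a right adjoint. The dual move for left extensions of lax transformations would require a left adjoint to $U_l$, which does not exist in general, and this is precisely where the asymmetry between the lax and colax sides advertised in the introduction first surfaces. There are no lurking (co)completeness obstacles: everything needed is already subsumed in Corollary \ref{2.2} and in the proof of Proposition \ref{3.3}, where the relevant joins and meets are computed objectwise in the complete posets $GA$.
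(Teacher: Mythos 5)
Your proof is correct, but it takes a different route from the paper's. The paper's proof of Proposition \ref{3.4} is a one-line appeal to the adjoint functor theorem for posets: by (the proof of) Proposition \ref{2.1}, $[H,G]^\Sigma_l$ is cocomplete with joins computed objectwise, and $-\circ\iota:[H,G]^\Sigma_l\to[F,G]^\Sigma_l$ preserves these joins, so it has a right adjoint; Lemma \ref{lem} then finishes. You instead build the right adjoint explicitly as $R_l\,\mathrm{Ran}_\iota\,U_l$ by conjugating the general-transformation right extension of Proposition \ref{3.3} with the coreflection of Corollary \ref{2.2}, and your chain of order-equivalences verifying the adjunction is sound (the needed facts --- that $-\circ\iota$ preserves laxness because $\iota$ is strict, that the square with the $U_l$'s commutes, and that the poset inclusion $U_l$ is order-reflecting --- are all correctly checked). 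What your approach buys is the explicit formula $\mathrm{Ran}_\iota\lambda=R_l\,\mathrm{Ran}_\iota\,U_l\lambda$, which the paper itself only extracts later (in the proof of the right Carath\'eodory extension in Section \ref{7}, via ``the left adjoints commute, hence so do the right adjoints''); what the paper's route buys is brevity and uniformity with the proof of Proposition \ref{3.3}. Your closing remark correctly identifies the source of the lax/colax asymmetry: the construction hinges on $U_l$ having a \emph{right} adjoint, which is why the dual statement for left extensions of lax $\Sigma$-transformations requires the extra hypothesis of Proposition \ref{3.6}.
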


\begin{proof}
Because $[F,G]_l^\Sigma$ is cocomplete by Proposition \ref{2.1} and because $-\circ \iota$ preserves all joins, the restriction map has a right adjoint by the Adjoint Functor Theorem for complete posets.
\end{proof}
\begin{rem}\label{3.5}
We have the following inequality \[\begin{tikzcd}
	{[H,G]_l^\Sigma} && {[F,G]_l^\Sigma} \\
	\\
	{[H,G]^\Sigma} && {[F,G]^\Sigma}
	\arrow["{\mathrm{Ran}_\iota-}"{description}, from=3-3, to=3-1]
	\arrow["{\mathrm{Ran}_\iota-}"{description}, from=1-3, to=1-1]
	\arrow["{U_l}"{description}, from=1-1, to=3-1]
	\arrow["{U_l}"{description}, from=1-3, to=3-3]
	\arrow["{\leq }"{description}, draw=none, from=1-1, to=3-3]
\end{tikzcd}\] If this is an equality and right extensions of $\Sigma$- general transformations are objectwise and proper, then so are right extensions of $\Sigma$-lax transformations.
\end{rem}
We have the following useful property about the existence of extensions of $\Sigma$-natural lax transformations and when they inherit properties from $\Sigma$-natural general transformations.
\begin{prop}\label{3.6}
We have the following inequality \[\begin{tikzcd}
	{[H,G]_l^\Sigma} && {[F,G]_l^\Sigma} \\
	\\
	{[H,G]^\Sigma} && {[F,G]^\Sigma}
	\arrow["{\leq }"{description}, draw=none, from=1-1, to=3-3]
	\arrow["{-\circ \iota}"{description}, from=1-1, to=1-3]
	\arrow["{R_l}"{description}, from=3-1, to=1-1]
	\arrow["{R_l}"{description}, from=3-3, to=1-3]
	\arrow["{-\circ \iota}"{description}, from=3-1, to=3-3]
\end{tikzcd}\]
If this is an equality, i.e. if $$R_l(-\circ \iota)=(R_l-)\circ \iota, $$then left extensions of $\Sigma$-natural lax transformations along $\iota$ exist and inherit objectwiseness and properness from left extensions of $\Sigma$-natural general transformations.

Moreover, right extensions of $\Sigma$-natural lax transformations inherit properness from right extensions of $\Sigma$-natural general transformations.
\end{prop}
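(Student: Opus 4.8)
The plan is to reduce everything to three adjunctions already at hand — $U_l\dashv R_l$ (Corollary \ref{2.2}), $\mathrm{Lan}_\iota\dashv(-\circ\iota)$ on general $\Sigma$-transformations (Proposition \ref{3.3}), and $(-\circ\iota)\dashv\mathrm{Ran}_\iota$ on lax $\Sigma$-transformations (Proposition \ref{3.4}) — together with the elementary observation that, because $\iota$ is \emph{strict}, the restriction $\mu\circ\iota$ of a lax (resp.\ general) $\Sigma$-transformation along $\iota$ is again one. I will use throughout that $U_l$ is fully faithful, hence preserves and reflects the order, that the unit of $U_l\dashv R_l$ is an equality, $R_lU_l\lambda=\lambda$, while the counit $U_lR_l\mu\leq\mu$ is merely an inequality, and that $R_l\nu$ is the largest lax $\Sigma$-transformation below $\nu$.

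For the inequality in the statement, I would take the counit $U_lR_l\mu\leq\mu$ in $[H,G]^\Sigma$, restrict it along $\iota$ to get $(R_l\mu)\circ\iota\leq\mu\circ\iota$, and note that the left-hand side is lax. Since $R_l(\mu\circ\iota)$ is the largest lax $\Sigma$-transformation below $\mu\circ\iota$, this yields $(R_l\mu)\circ\iota\leq R_l(\mu\circ\iota)$, which is precisely the inequality asserted.

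Now suppose this is an equality, i.e.\ $(R_l\mu)\circ\iota=R_l(\mu\circ\iota)$ for all $\mu\in[H,G]^\Sigma$. The crux is the claim that for every lax $\Sigma$-transformation $\lambda$ the general left extension $\mathrm{Lan}_\iota(U_l\lambda)$ is already lax. To see this, apply $R_l$ to the unit inequality $U_l\lambda\leq\mathrm{Lan}_\iota(U_l\lambda)\circ\iota$ and use $R_lU_l\lambda=\lambda$ together with the equality hypothesis (for $\mu=\mathrm{Lan}_\iota(U_l\lambda)$) to obtain $\lambda\leq\bigl(R_l\,\mathrm{Lan}_\iota(U_l\lambda)\bigr)\circ\iota$; applying $U_l$, the general transformation $U_lR_l\,\mathrm{Lan}_\iota(U_l\lambda)$ has restriction dominating $U_l\lambda$, so since $\mathrm{Lan}_\iota$ is left adjoint to $-\circ\iota$ we get $\mathrm{Lan}_\iota(U_l\lambda)\leq U_lR_l\,\mathrm{Lan}_\iota(U_l\lambda)$, while the counit gives the reverse inequality. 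Hence $U_lR_l$ fixes $\mathrm{Lan}_\iota(U_l\lambda)$, so it is lax, and one checks straightforwardly (using that $U_l$ preserves and reflects the order) that it then satisfies the universal property of the lax left extension of $\lambda$. Therefore $\mathrm{Lan}_\iota\lambda$ exists and coincides on the nose with $\mathrm{Lan}_\iota(U_l\lambda)$, so objectwiseness and properness transfer immediately.

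For the final claim, $\mathrm{Ran}_\iota\lambda$ already exists for lax $\lambda$ by Proposition \ref{3.4}, and the adjunction bijections $\nu\leq R_l\bigl(\mathrm{Ran}_\iota(U_l\lambda)\bigr)\Leftrightarrow U_l\nu\leq\mathrm{Ran}_\iota(U_l\lambda)\Leftrightarrow(U_l\nu)\circ\iota\leq U_l\lambda\Leftrightarrow\nu\circ\iota\leq\lambda$ (for lax $\nu$), compared with the defining adjunction of $\mathrm{Ran}_\iota$, identify it as $\mathrm{Ran}_\iota\lambda=R_l\bigl(\mathrm{Ran}_\iota(U_l\lambda)\bigr)$. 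If the general right extension is proper, then, using the equality hypothesis once more, $\mathrm{Ran}_\iota\lambda\circ\iota=R_l\bigl(\mathrm{Ran}_\iota(U_l\lambda)\bigr)\circ\iota=R_l\bigl(\mathrm{Ran}_\iota(U_l\lambda)\circ\iota\bigr)=R_l(U_l\lambda)=\lambda$, so the lax right extension is proper. The one place where the equality hypothesis does real work is the laxness claim above; everything else is routine manipulation of the three adjunctions, the only thing to watch being the asymmetry between the invertible unit of $U_l\dashv R_l$ and its merely inequality-valued counit.
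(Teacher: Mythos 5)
Your proof is correct, and its overall skeleton matches the paper's: both arguments turn on the observation that the hypothesis $R_l(-\circ\iota)=(R_l-)\circ\iota$ forces the left (resp.\ right) Kan extensions to commute with $U_l$ (resp.\ $R_l$), and your final computation $\mathrm{Ran}_\iota\lambda\circ\iota=R_l(\mathrm{Ran}_\iota(U_l\lambda)\circ\iota)=R_l(U_l\lambda)=\lambda$ is word for word the paper's. The genuine divergence is in how existence of lax left extensions is obtained. The paper first gets a left adjoint to $-\circ\iota:[H,G]_l^\Sigma\to[F,G]_l^\Sigma$ from the adjoint lifting theorem (this step does not use the hypothesis at all) and only then invokes the equality to show the two $\mathrm{Lan}_\iota$'s commute. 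You instead show directly that, under the equality hypothesis, $\mathrm{Lan}_\iota(U_l\lambda)$ is fixed by $U_lR_l$ and hence already lax; this yields existence and the commutation $U_l\mathrm{Lan}_\iota=\mathrm{Lan}_\iota U_l$ in a single stroke, is more self-contained (no appeal to \cite{johnstone}), and actually uses the hypothesis where the proposition's phrasing suggests it should be used. You also supply a proof of the displayed inequality itself, which the paper's proof omits. One small point: you work throughout with the adjoint characterization of $\mathrm{Lan}_\iota$ as in Lemma \ref{lem} (the \emph{least} transformation whose restriction dominates $\lambda$); this is the convention the paper actually uses in its proofs, even though the inequality in the displayed universal property of Section \ref{3} reads the other way, so no harm done.
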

\begin{proof}
The map $[H,G]^\Sigma\xrightarrow{-\circ \iota}[F,G]^\Sigma$ has a left adjoint by Proposition \ref{3.3} and the following diagram commutes \[\begin{tikzcd}
	{[H,G]_l^\Sigma} && {[F,G]_l^\Sigma} \\
	\\
	{[H,G]^\Sigma} && {[F,G]^\Sigma}
	\arrow["{U_l}"{description}, from=1-1, to=3-1]
	\arrow["{U_l}"{description}, from=1-3, to=3-3]
	\arrow["{-\circ \iota}"{description}, from=3-1, to=3-3]
	\arrow["{-\circ \iota}"{description}, from=1-1, to=1-3]
\end{tikzcd}\] Since $U_l$ is full and faithful, it follows from the adjoint lifting theorem in \cite{johnstone} that $[H,G]_l^\Sigma\xrightarrow{-\circ \iota} [F,G]_l^\Sigma$ has a left adjoint.  By the hypothesis we have that $R_l(-\circ \iota) = (R_l-)\circ \iota$. These are all right adjoint, so therefore their left adjoints commute as well, this means that the following diagram commutes \[\begin{tikzcd}
	{[H,G]_l^\Sigma} && {[F,G]_l^\Sigma} \\
	\\
	{[H,G]^\Sigma} && {[F,G]^\Sigma}
	\arrow["{\mathrm{Lan}_\iota-}"{description}, from=1-3, to=1-1]
	\arrow["{\mathrm{Lan}_\iota-}"{description}, from=3-3, to=3-1]
	\arrow["{U_l}"{description}, from=1-1, to=3-1]
	\arrow["{U_l}"{description}, from=1-3, to=3-3]
\end{tikzcd}\] 
It follows now easily that properness and objectwiseness are inherited from  left extensions of $\Sigma$-natural general transformations. 

Suppose now that right extensions of $\Sigma$-natural general transformations along $\iota$ are proper. Because $U_l(-\circ \iota) = (U_l-)\circ \iota$, their right adjoints commute as well. Together with the hypothesis, this leads to the following commutative square. \[\begin{tikzcd}
	{[F,G]_l^\Sigma} && {[H,G]_l^\Sigma} && {[F,G]_l^\Sigma} \\
	\\
	{[F,G]^\Sigma} && {[H,G]^\Sigma} && {[F,G]^\Sigma}
	\arrow["{\mathrm{Ran}_\iota-}"{description}, from=1-1, to=1-3]
	\arrow["{\mathrm{Ran}_\iota-}"{description}, from=3-1, to=3-3]
	\arrow["{-\circ \iota}"{description}, from=3-3, to=3-5]
	\arrow["{\circ \iota}"{description}, from=1-3, to=1-5]
	\arrow["{R_l}"{description}, from=3-1, to=1-1]
	\arrow["{R_l}"{description}, from=3-3, to=1-3]
	\arrow["{R_l}"{description}, from=3-5, to=1-5]
	\arrow["{1_{[F,G]}}"{description}, curve={height=18pt}, from=3-1, to=3-5]
\end{tikzcd}\]
We now have for a $\Sigma$-natural lax transformation $\lambda:F\to G$, \[\mathrm{Ran}_\iota \lambda \circ \iota = \mathrm{Ran}_\iota R_l(U_l)\lambda \circ \iota=R_lU_l\lambda = \lambda.\]
 
\end{proof}
We have dual results for Proposition \ref{3.4} and Proposition \ref{3.6} for $\Sigma$-natural colax transformation.

Under even more conditions, we can guarantee the existence of extensions of strict transformations. Let $T_1$ be the closure operator on $[H,G]_l^\Sigma$ as described before Proposition \ref{strictness} and let $T_2$ be the closure operator on $[F,G]_l^\Sigma$.
\begin{prop}\label{3.7}

Suppose that the following diagram commutes
\[\begin{tikzcd}
	{[H,G]_l^\Sigma} && {[F,G]_l^\Sigma} \\
	\\
	{[H,G]_l^\Sigma} && {[F,G]_l^\Sigma}
	\arrow["{-\circ \iota}", from=1-1, to=1-3]
	\arrow["{-\circ \iota}"', from=3-1, to=3-3]
	\arrow["{T_1}"{description}, from=1-1, to=3-1]
	\arrow["{T_2}"{description}, from=1-3, to=3-3]
\end{tikzcd}\]

Then $-\circ \iota$ induces an order-preserving map $([H,G]_l^\Sigma)^{T_1}\to ([F,G]_l^\Sigma)^{T_2}$ and this map has a right adjoint. 

Moreover, if right extensions of $\Sigma$-natural lax transformations along $\iota$ are proper (resp. objectwise), then so are right extensions of algebras along $\iota$.
\end{prop}
\begin{proof}
It follows from the hypothesis that the following square commutes. \[\begin{tikzcd}
	{([H,G]_l^\Sigma)^{T_1}} && {([F,G]_l^\Sigma)^{T_2}} \\
	\\
	{[H,G]_l^\Sigma} && {[F,G]_l^\Sigma}
	\arrow[from=3-1, to=1-1]
	\arrow[from=3-3, to=1-3]
	\arrow["{-\circ \iota}"{description}, from=3-1, to=3-3]
	\arrow["{-\circ \iota}"{description}, from=1-1, to=1-3]
\end{tikzcd}\]
By the adjoint lifting theorem in \cite{johnstone}, the right adjoint of $-\circ \iota: [H,G]_l^\Sigma\to [F,G]_l^\Sigma$ can be lifted to a right adjoint of the algebras. Because the left adjoints commute, so do the right adjoints. It follows that properness and objectwiseness are inherited from the extensions of $\Sigma$-natural lax transformations.
\end{proof}
Applying this to the case that the strictness condition holds, immediately gives us the following corollary. 
\begin{cor}\label{3.8}
Suppose that $(F,G,\Sigma)$ and $(H,G,\Sigma)$ satisfy the strictness condition and that $\overline{\lambda}\circ \iota = \overline{\lambda\circ \iota}$ for all $\lambda \in [H,G]_l$. Then, $[H,G]_s\xrightarrow{-\circ \iota}[F,G]_s$ has a right adjoint. Moreover, if right extensions of lax transformations along $\iota$ are proper (resp. objectwise), then so are right extensions of strict transformations along $\iota$. 
\end{cor}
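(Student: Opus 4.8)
The plan is to obtain Corollary \ref{3.8} as a direct application of Proposition \ref{3.7}, the only real work being to translate the hypothesis $\overline{\lambda}\circ\iota=\overline{\lambda\circ\iota}$ into the statement that $-\circ\iota$ is a \emph{strict} morphism of monads. I would begin by computing $T_1$ on $[H,G]^\Sigma_l$ and $T_2$ on $[F,G]^\Sigma_l$ under the strictness condition. By construction $T_1\lambda=\underline{\overline{\lambda}}$, and I claim $\underline{\tau}=\tau$ for every strict transformation $\tau$: since $\tau$ is lax and $\tau\leq\tau$, the adjunction $U_l\dashv R_l$ gives $\tau\leq R_lU_c\tau=\underline{\tau}$, while the counit of $U_l\dashv R_l$ gives $\underline{\tau}\leq\tau$. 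As $(H,G,\Sigma)$ satisfies the strictness condition, $\overline{\lambda}$ is strict, whence $T_1\lambda=\underline{\overline{\lambda}}=\overline{\lambda}$; the same computation, using the strictness condition for $(F,G,\Sigma)$, gives $T_2\mu=\overline{\mu}$ for all $\mu\in[F,G]^\Sigma_l$.

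Next I would check that $\lambda\circ\iota$ is again a lax $\Sigma$-transformation whenever $\lambda$ is, so that $\overline{\lambda\circ\iota}$ makes sense: for $f\colon A\to B$, the lax inequality for $\lambda$ together with strictness of $\iota$ gives $Gf\circ\lambda_A\circ\iota_A\geq\lambda_B\circ Hf\circ\iota_A=\lambda_B\circ\iota_B\circ Ff$, and this is an equality when $f\in\Sigma$ because $\lambda$ restricts to an equality on $\Sigma$. With $T_1$ and $T_2$ identified with $\overline{(-)}$ as above, the hypothesis $\overline{\lambda}\circ\iota=\overline{\lambda\circ\iota}$ reads precisely $(T_1\lambda)\circ\iota=T_2(\lambda\circ\iota)$, i.e. $(-\circ\iota)\circ T_1=T_2\circ(-\circ\iota)$ as endofunctors. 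Because every category in sight is a poset, the required compatibility of this identity with the units and multiplications of $T_1$ and $T_2$ is automatic, so $-\circ\iota$ is a strict morphism of monads $([H,G]^\Sigma_l,T_1)\to([F,G]^\Sigma_l,T_2)$.

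Now Proposition \ref{3.7} applies directly. It yields a right adjoint to the induced order-preserving map $([H,G]^\Sigma_l)^{T_1}\to([F,G]^\Sigma_l)^{T_2}$, and it states that right extensions of algebras along $\iota$ inherit properness and objectwiseness from right extensions of lax $\Sigma$-transformations along $\iota$. By the strictness condition and condition (2) of Proposition \ref{strictness} we may identify $([H,G]^\Sigma_l)^{T_1}=[H,G]_s$ and $([F,G]^\Sigma_l)^{T_2}=[F,G]_s$; under these identifications the Eilenberg--Moore forgetful functors are the inclusions into the lax $\Sigma$-transformations, so the induced map is simply $-\circ\iota$ restricted to strict transformations, which is well defined since a composite of strict transformations is strict. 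Hence $[H,G]_s\xrightarrow{-\circ\iota}[F,G]_s$ has a right adjoint, which by Lemma \ref{lem} is $\mathrm{Ran}_\iota-$; thus right extensions of strict transformations along $\iota$ exist, and the properness/objectwiseness claim is exactly the corresponding clause of Proposition \ref{3.7}.

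The only step requiring genuine care is the passage from "$\overline{\lambda}\circ\iota=\overline{\lambda\circ\iota}$" to "$-\circ\iota$ is a \emph{strict} morphism of monads": this relies on the explicit formula $T_i=\overline{(-)}$ from the first paragraph, so that the coherence conditions (free in the poset setting) are being imposed on the right endofunctors, and on checking that the identifications $([H,G]^\Sigma_l)^{T_1}=[H,G]_s$ and $([F,G]^\Sigma_l)^{T_2}=[F,G]_s$ intertwine the Eilenberg--Moore adjunctions used inside the proof of Proposition \ref{3.7} with the free/forgetful adjunctions between strict and lax $\Sigma$-transformations. Everything else is a formal consequence of Proposition \ref{3.7} and Lemma \ref{lem}.
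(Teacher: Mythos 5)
Your proposal is correct and follows the same route as the paper, which derives Corollary \ref{3.8} directly from Proposition \ref{3.7}; the paper simply asserts the deduction is immediate, while you supply the (correct) verification that under the strictness condition $T_i=\overline{(-)}$, that the hypothesis is exactly the strict monad-morphism condition, and that the Eilenberg--Moore categories identify with the posets of strict transformations.
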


Again, there are  dual results of Proposition \ref{3.7} and Corollary \ref{3.8} for left extensions of strict transformations. 


In general, the existence and properties of left and right extensions are not connected. We can for example have that right extensions don't exist, but left extensions do and are well-behaved. However, we do have the following connection between left and right extensions of strict transformations. 
\begin{prop}\label{3.9}
Suppose that $[H,G]_s\xrightarrow{-\circ \iota}[F,G]_s$ has a left and a right adjoint. Then, left extensions along $\iota$ are proper if and only if right extensions along $\iota$ are proper. 
\end{prop}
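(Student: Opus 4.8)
The plan is to exploit the fact that when $[H,G]_s \xrightarrow{-\circ\iota} [F,G]_s$ has both a left adjoint $\mathrm{Lan}_\iota$ and a right adjoint $\mathrm{Ran}_\iota$, we are in the comfortable situation where these are genuine adjoints on posets, so the unit and counit are the only data and each "triangle identity" degenerates to an inequality chain. Write $R := -\circ\iota$ for the restriction functor. Properness of left extensions means $R(\mathrm{Lan}_\iota\tau) = \tau$ for every strict $\tau:F\to G$, i.e. the unit $\eta_\tau: \tau \leq R(\mathrm{Lan}_\iota\tau)$ of the adjunction $\mathrm{Lan}_\iota \dashv R$ is an equality; dually, properness of right extensions means the counit $\varepsilon_\tau: R(\mathrm{Ran}_\iota\tau) \leq \tau$ of $R\dashv \mathrm{Ran}_\iota$ is an equality.

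First I would prove the forward direction. Assume left extensions are proper, so $R\,\mathrm{Lan}_\iota = \mathrm{id}_{[F,G]_s}$. Then $\mathrm{Lan}_\iota$ is a fully faithful left adjoint (on posets: $R\mathrm{Lan}_\iota\tau = \tau$ forces the unit to be an iso), which realizes $[F,G]_s$ as a coreflective subposet of $[H,G]_s$ via $R$. For any strict $\sigma:F\to G$ I want $R\,\mathrm{Ran}_\iota\sigma = \sigma$. Using the adjunction $R\dashv\mathrm{Ran}_\iota$ and the counit $\varepsilon$, it suffices to produce an inequality $\sigma \leq R\,\mathrm{Ran}_\iota\sigma$ in the opposite direction, since $\varepsilon_\sigma$ already gives $R\,\mathrm{Ran}_\iota\sigma \leq \sigma$. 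To get this I would apply the hom-set adjunction: a map $\sigma \leq R\,\mathrm{Ran}_\iota\sigma$ in $[F,G]_s$ corresponds to a map $\mathrm{Lan}_\iota\sigma \leq \mathrm{Ran}_\iota\sigma$ in $[H,G]_s$ (first transpose the identity $R\mathrm{Lan}_\iota\sigma = \sigma$ along $\mathrm{Lan}_\iota\dashv R$ to get a candidate $\mathrm{Lan}_\iota\sigma \to \mathrm{Lan}_\iota\sigma$... no: rather, transpose $\mathrm{id}: R\mathrm{Lan}_\iota\sigma \leq \sigma$'s inverse), so the real content is the general poset fact that when a functor has both adjoints and the unit of the left adjunction is invertible, the counit of the right adjunction is invertible too — equivalently, that $\mathrm{Lan}_\iota \leq \mathrm{Ran}_\iota$ pointwise composed appropriately with $R$ collapses. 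Concretely: $R\,\mathrm{Ran}_\iota\sigma \geq R\,\mathrm{Ran}_\iota R\,\mathrm{Lan}_\iota\sigma \geq R\,\mathrm{Lan}_\iota\sigma = \sigma$, where the first step is monotonicity of $R\,\mathrm{Ran}_\iota$ applied to $\sigma = R\mathrm{Lan}_\iota\sigma \leq R\mathrm{Lan}_\iota\sigma$ (trivial) — I need to instead use $\mathrm{id}_{[H,G]_s} \leq \mathrm{Ran}_\iota R$ (unit of $R\dashv\mathrm{Ran}_\iota$) at the object $\mathrm{Lan}_\iota\sigma$, giving $\mathrm{Lan}_\iota\sigma \leq \mathrm{Ran}_\iota R\,\mathrm{Lan}_\iota\sigma = \mathrm{Ran}_\iota\sigma$, then apply $R$ and the left-properness hypothesis: $\sigma = R\,\mathrm{Lan}_\iota\sigma \leq R\,\mathrm{Ran}_\iota\sigma$. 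Combined with the counit inequality $R\,\mathrm{Ran}_\iota\sigma \leq \sigma$, this gives equality, so right extensions are proper.

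The reverse direction is entirely dual: assuming right extensions are proper, $R\,\mathrm{Ran}_\iota = \mathrm{id}$, and for strict $\tau$ I apply the counit $R\,\mathrm{Lan}_\iota \leq \mathrm{id}$ — wait, the counit of $\mathrm{Lan}_\iota\dashv R$ is $\mathrm{Lan}_\iota R \leq \mathrm{id}_{[H,G]_s}$ — at $\mathrm{Ran}_\iota\tau$, obtaining $\mathrm{Lan}_\iota\tau = \mathrm{Lan}_\iota R\,\mathrm{Ran}_\iota\tau \leq \mathrm{Ran}_\iota\tau$, then apply $R$ and right-properness to get $R\,\mathrm{Lan}_\iota\tau \leq R\,\mathrm{Ran}_\iota\tau = \tau$; together with the unit inequality $\tau \leq R\,\mathrm{Lan}_\iota\tau$ this forces equality, so left extensions are proper. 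The main obstacle is purely bookkeeping: keeping straight which of the four unit/counit 2-cells (units and counits of $\mathrm{Lan}_\iota\dashv R$ and of $R\dashv\mathrm{Ran}_\iota$) points which way, and making sure the monotonicity step composes the inequalities in a consistent direction — on posets everything is an inequality so there is no coherence to check, but the signs must line up. I would present it cleanly by first recording the four inequalities $\tau \leq R\mathrm{Lan}_\iota\tau$, $\mathrm{Lan}_\iota R\nu \leq \nu$, $\nu \leq \mathrm{Ran}_\iota R\nu$, $R\mathrm{Ran}_\iota\tau \leq \tau$ (for $\tau\in[F,G]_s$, $\nu\in[H,G]_s$) and then running the two-line argument above in each direction.
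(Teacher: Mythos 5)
Your proposal is correct and is essentially the paper's argument: both proofs reduce to showing that one properness hypothesis forces $\mathrm{Lan}_\iota\tau\leq\mathrm{Ran}_\iota\tau$ and then sandwich $\tau$ between the unit inequality $\tau\leq\mathrm{Lan}_\iota\tau\circ\iota$ and the counit inequality $\mathrm{Ran}_\iota\tau\circ\iota\leq\tau$. You obtain the middle inequality by whiskering a unit/counit of the composite adjunctions, while the paper invokes the universal property of the extensions directly (and records an extra, inessential comparison with the objectwise Kan extensions), but these are the same formal argument.
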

\begin{proof}
Let $\tau\in [F,G]_s$. By the universal property of extensions, we have $(\mathrm{Ran}_\iota \tau)_A \leq \mathrm{Ran}_{\iota_A}\tau_A$ and $\mathrm{Lan}_{\iota_A} \tau_A \leq (\mathrm{Lan}_{\iota}\tau)_A$ for all $A\in \ob\mathcal{C}$. We have the following inequalities for all $A\in \ob\mathcal{C}$: \[(\mathrm{Ran}_\iota \tau\circ \iota)_A \leq \mathrm{Ran}_{\iota_A}\tau_A\circ \iota_A\leq \tau_A\leq \mathrm{Lan}_{\iota_A}\tau_A\circ \iota_A\leq (\mathrm{Lan}_\iota \tau\circ \iota)_A\]
Suppose that left extensions along $\iota$ are proper, i.e. $\mathrm{Lan}_\iota\tau\circ \iota = \tau$. By the universal property of right extensions, it follows that \[\mathrm{Lan}_\iota\tau\leq \mathrm{Ran}_\iota\tau.\] Therefore $\tau=\mathrm{Lan}_\iota\tau\circ \iota = \mathrm{Ran}_\iota\tau\circ \iota$.

Similarly, if right extensions along $\iota$ are proper, we find by the universal property of left extensions that $\mathrm{Lan}_\iota\tau\leq \mathrm{Ran}_\iota\tau$ and we can conclude that $\mathrm{Lan}_\iota\tau\circ \iota = \mathrm{Ran}_\iota\tau\circ \iota=\tau$. 
\end{proof}

An overview of the maps we \emph{always} have is given by the following diagram. We assume that $(F,G,\Sigma)$ and $(H,G,\Sigma)$ satisfy the strictness condition. In the diagram, squares of the same colour commute and the dashed arrows indicate that these form the back of the parallelepiped. 

\[\begin{tikzcd}
	& {[H,G]_s} &&&& {[F,G]_s} \\
	&& {[H,G]_l^\Sigma} &&&& {[F,G]_l^\Sigma} \\
	{[H,G]_c^\Sigma} &&&& {[F,G]_c^\Sigma} \\
	& {[H,G]^\Sigma} & {} &&& {[F,G]^\Sigma}
	\arrow["{-\circ \iota}"{description}, from=4-2, to=4-6]
	\arrow["{-\circ \iota}"{description}, dashed, from=3-1, to=3-5]
	\arrow["{-\circ\iota}"{description}, from=2-3, to=2-7]
	\arrow["{-\circ\iota}"{description}, from=1-2, to=1-6]
	\arrow[from=1-2, to=3-1]
	\arrow[from=1-2, to=2-3]
	\arrow[from=3-1, to=4-2]
	\arrow[from=2-3, to=4-2]
	\arrow[dashed, from=1-6, to=3-5]
	\arrow[dashed, from=3-5, to=4-6]
	\arrow[from=1-6, to=2-7]
	\arrow[from=2-7, to=4-6]
	\arrow["{\mathrm{Lan}_\iota-}", shift left=2, color={OliveGreen}, from=4-6, to=4-2]
	\arrow["{\mathrm{Ran}_\iota-}"', shift right=2, color={BrickRed}, from=4-6, to=4-2]
	\arrow["{L_c}", shift left=2, color={OliveGreen}, from=4-2, to=3-1]
	\arrow[shift right=2, color={OliveGreen}, from=2-3, to=1-2]
	\arrow["{R_l}"'{pos=0.4}, shift right=2, color={BrickRed}, from=4-2, to=2-3]
	\arrow["{L_c}"', shift right=2, color={OliveGreen}, dashed, from=4-6, to=3-5]
	\arrow[shift right=2, color={OliveGreen}, from=2-7, to=1-6]
	\arrow["{R_l}"', shift right=2, color={BrickRed}, from=4-6, to=2-7]
	\arrow[shift left=2, color={BrickRed}, from=3-1, to=1-2]
	\arrow[shift left=2, color={BrickRed}, dashed, from=3-5, to=1-6]
	\arrow["{\text{Lan}_\iota-}"'{pos=0.3}, shift right=2, color={OliveGreen}, dashed, from=3-5, to=3-1]
	\arrow["{\mathrm{Ran}_\iota-}"{pos=0.6}, shift left=2, color={BrickRed}, from=2-7, to=2-3]
\end{tikzcd}\]

\section{The (co)laxification formula} \label{4}
In this section we will give concrete constructions for the operation $\overline{(-)}$ and $\underline{(-)}$ discussed in Section \ref{2}, in the case that the functors are well-behaved. Using these we can give explicit constructions of joins and meets in posets of $\Sigma$-natural transformations. These constructions will be used to construct (pre)measures from inner and outer (pre)measures and to describe joins and meets in posets of (pre)measures in Section \ref{6}.

Let $\mathcal{C}$ be a small poset-enriched category and let $\Sigma$ be a collection of morphisms in $\mathcal{C}$. Let $F$ and $G$ be enriched functors $\mathcal{C}\to \textbf{Pos}$ such that $GA$ is a complete for all $A\in \ob\mathcal{C}$ and such that $Gf$ preserves meets and joins for all $f\in \Sigma$. 

Intuitively it is clear that $(F,G,\emptyset)$ satisfying the strictness property is stronger than $(F,G,\Sigma)$ satisfying the strictness property. This is the content of the following results. This proposition motivates that it is enough to give constructions of these operations in the case that $\Sigma=\emptyset$.

\begin{prop}\label{4.1}
If $(F,G,\emptyset)$ satisfies the strictness condition, then $\overline{\lambda}^\Sigma=\overline{\lambda}^\emptyset$ for all $\lambda\in [F,G]^\Sigma_l$. In particular, $(F,G,\Sigma)$ satisfies the strictness condition.\footnote{Here $(-)^\Sigma:[F,G]_l^\Sigma\to [F,G]_c^\Sigma$ and $(-)^\emptyset:[F,G]_l^\emptyset\to [F,G]_c^\emptyset$ refer to the operations described in section \ref{2}.} 
\end{prop}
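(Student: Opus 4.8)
The plan is to compare the two colaxification operations $\overline{(-)}^\Sigma = L_c^\Sigma U_l^\Sigma$ and $\overline{(-)}^\emptyset = L_c^\emptyset U_l^\emptyset$ directly on a fixed $\lambda \in [F,G]_l^\Sigma$, and to show that under the hypothesis that $(F,G,\emptyset)$ satisfies the strictness condition the former agrees with the latter. First I would recall that a lax $\Sigma$-transformation is in particular a lax ($\emptyset$-)transformation, so $\overline{\lambda}^\emptyset$ is defined; by the strictness condition for $(F,G,\emptyset)$ (using, say, condition (1) of Proposition \ref{strictness}) we know $\overline{\lambda}^\emptyset$ is a \emph{strict} transformation. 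In particular it lies in $[F,G]_s \subseteq [F,G]_c^\Sigma$, so it is a candidate colax $\Sigma$-transformation below which we must compare $\overline{\lambda}^\Sigma$. The two inequalities to establish are $\overline{\lambda}^\Sigma \le \overline{\lambda}^\emptyset$ and $\overline{\lambda}^\emptyset \le \overline{\lambda}^\Sigma$, each via the appropriate universal property.

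For the first inequality, note $\overline{\lambda}^\emptyset \ge \lambda$ (this is the unit $U_l^\emptyset \lambda \le U_c^\emptyset \overline{\lambda}^\emptyset$ of the $L_c^\emptyset \dashv U_c^\emptyset$ adjunction, pulled back along the inclusion into $[F,G]^\emptyset$; concretely $\overline{\lambda}^\emptyset$ is the smallest colax transformation $\ge \lambda$). Since $\overline{\lambda}^\emptyset$ is strict, it is in particular colax \emph{as a $\Sigma$-transformation}, i.e. $\overline{\lambda}^\emptyset \in [F,G]_c^\Sigma$ with $\overline{\lambda}^\emptyset \ge \lambda = U_l^\Sigma \lambda$. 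By the universal property defining $\overline{\lambda}^\Sigma = L_c^\Sigma U_l^\Sigma \lambda$ as the smallest colax $\Sigma$-transformation above $\lambda$, we get $\overline{\lambda}^\Sigma \le \overline{\lambda}^\emptyset$. For the reverse inequality, observe that a colax $\Sigma$-transformation need not be a colax $\emptyset$-transformation, so one cannot argue symmetrically; instead I would use that $\overline{\lambda}^\Sigma \ge \lambda$ and then invoke that $\overline{\lambda}^\Sigma$, being $\ge \lambda$ and colax (hence also lax at the morphisms in $\Sigma$ where equality holds), dominates the $\emptyset$-colaxification — the key point being that $L_c^\emptyset$ is monotone and $L_c^\emptyset U_l^\emptyset \overline{\lambda}^\Sigma$ would sit above $\overline{\lambda}^\emptyset$; here I expect to need that $\overline{\lambda}^\Sigma$ is itself fixed by $L_c^\emptyset U_c^\emptyset$, which holds since it is colax. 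More carefully: $\overline{\lambda}^\Sigma$ is colax (as a $\Sigma$-transformation) but I must argue it is $\ge$ some colax $\emptyset$-transformation computing $\overline{\lambda}^\emptyset$; since $\overline{\lambda}^\emptyset$ is the least colax $\emptyset$-transformation $\ge \lambda$ and $\overline{\lambda}^\Sigma \ge \lambda$, it suffices that $\overline{\lambda}^\Sigma$ be a colax $\emptyset$-transformation, which is \emph{not} automatic — so the honest route is: apply $\overline{(-)}^\emptyset$ to $\overline{\lambda}^\Sigma$, note $\overline{\overline{\lambda}^\Sigma}^{\emptyset}$ is strict (strictness condition again) and $\ge \overline{\lambda}^\Sigma \ge \lambda$, hence $\ge \overline{\lambda}^\emptyset$; but also $\overline{\overline{\lambda}^\Sigma}^\emptyset$ is strict hence colax as a $\Sigma$-transformation and, being the $\emptyset$-colaxification of something that is already the least colax $\Sigma$-transformation above $\lambda$... this forces $\overline{\overline{\lambda}^\Sigma}^\emptyset = \overline{\lambda}^\Sigma$ by minimality in $[F,G]_c^\Sigma$, and therefore $\overline{\lambda}^\Sigma = \overline{\overline{\lambda}^\Sigma}^\emptyset \ge \overline{\lambda}^\emptyset$.

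Combining the two inequalities yields $\overline{\lambda}^\Sigma = \overline{\lambda}^\emptyset$. The final clause — that $(F,G,\Sigma)$ satisfies the strictness condition — is then immediate: for every $\lambda \in [F,G]_l^\Sigma$ we have $\overline{\lambda}^\Sigma = \overline{\lambda}^\emptyset$, which is strict, so condition (1) of Proposition \ref{strictness} holds for $(F,G,\Sigma)$, and by that proposition all four equivalent conditions hold.

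I expect the main obstacle to be the reverse inequality $\overline{\lambda}^\emptyset \le \overline{\lambda}^\Sigma$, precisely because the inclusion $[F,G]_c^\Sigma \hookrightarrow [F,G]_c^\emptyset$ does \emph{not} exist — a $\Sigma$-colax transformation imposes equalities on $\Sigma$-morphisms that a generic colax $\emptyset$-transformation satisfies as inequalities only, so there is a genuine asymmetry. The trick of passing through $\overline{\overline{\lambda}^\Sigma}^\emptyset$ and using the strictness hypothesis to collapse it back is the crux; I would want to double-check that "least colax $\Sigma$-transformation above $\lambda$" is genuinely what $\overline{\lambda}^\Sigma$ computes (it is, by Corollary \ref{2.2}'s dual and fullness of $U_c$, the unit being an equality) so that the minimality argument is valid.
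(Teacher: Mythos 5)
Your first inequality is fine: under the hypothesis $\overline{\lambda}^\emptyset$ is strict, hence lies in $[F,G]_c^\Sigma$ and dominates $\lambda$, so minimality of $\overline{\lambda}^\Sigma=L_c^\Sigma\lambda$ gives $\overline{\lambda}^\Sigma\leq\overline{\lambda}^\emptyset$. The gap is in the reverse inequality, and it stems from a misreading of the definitions. A colax $\Sigma$-transformation is, by definition, a family satisfying $Gf\circ\tau_A\leq\tau_B\circ Ff$ for \emph{all} morphisms $f$, with equality additionally imposed when $f\in\Sigma$; it therefore \emph{is} a colax $\emptyset$-transformation, and the inclusion $[F,G]_c^\Sigma\hookrightarrow[F,G]_c^\emptyset$ that you declare nonexistent is exactly the map $i_c$ with which the paper's own proof begins. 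Granting it, the reverse inequality is one line: $\overline{\lambda}^\Sigma$ is a colax $\emptyset$-transformation above $\lambda$, and $\overline{\lambda}^\emptyset$ is the least such, so $\overline{\lambda}^\emptyset\leq\overline{\lambda}^\Sigma$.

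The substitute argument you give instead does not close. First, $\overline{(-)}^\emptyset=L_c^\emptyset U_l^\emptyset$ is defined on lax transformations, and $\overline{\lambda}^\Sigma$ is colax, not lax, so $\overline{\overline{\lambda}^\Sigma}^\emptyset$ is not literally meaningful; if you read it as $L_c^\emptyset\overline{\lambda}^\Sigma$, the strictness condition (which concerns $L_cU_l$ applied to \emph{lax} transformations) gives no reason for this to be strict. Second, even granting strictness, your minimality step only yields $\overline{\lambda}^\Sigma\leq L_c^\emptyset\overline{\lambda}^\Sigma$, which is the same direction as the unit inequality; you never obtain the opposite inequality needed to conclude $L_c^\emptyset\overline{\lambda}^\Sigma=\overline{\lambda}^\Sigma$, and that opposite inequality is precisely the statement that $\overline{\lambda}^\Sigma$ is already $\emptyset$-colax, i.e.\ the inclusion you rejected. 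For comparison, the paper works at the level of adjunctions: the meet-preserving inclusions $[F,G]_c^\Sigma\to[F,G]_c^\emptyset$ and $[F,G]^\Sigma\to[F,G]^\emptyset$ have left adjoints, the resulting squares of left adjoints commute, and $\overline{\lambda}^\Sigma=F_c^\Sigma\overline{\lambda}^\emptyset=\overline{\lambda}^\emptyset$ because the strict transformation $\overline{\lambda}^\emptyset$ is a fixed point of $F_c^\Sigma$. Your two-inequality strategy is a perfectly good elementary alternative once the reverse direction is repaired as above; the final clause deducing the strictness condition for $(F,G,\Sigma)$ is fine.
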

\begin{proof}
The inclusions $[F,G]_c^\Sigma\to [F,G]_c^\emptyset$ and $[F,G]^\Sigma\to [F,G]^\emptyset$ are meet-preserving maps between complete posets. Therefore, they have left adjoints ${\color{OliveGreen}{F_c^\Sigma}:[F,G]_c^\emptyset\to [F,G]_c^\Sigma}$ and ${\color{OliveGreen}{F^\Sigma}:[F,G]^\emptyset\to [F,G]^\Sigma}$ and we have that \[F_c^\Sigma(\sigma)=\sigma,\]  for every general $\Sigma$-natural colax transformation $\sigma:F\to G$. Moreover, the following square commutes, since their right adjoints do \[\begin{tikzcd}
	{[F,G]^\Sigma} && {[F,G]^\emptyset} \\
	\\
	{[F,G]_c^\Sigma} && {[F,G]_c^\emptyset}
	\arrow["{L^\Sigma_c}"', from=1-1, to=3-1]
	\arrow["{L^\emptyset_c}", from=1-3, to=3-3]
	\arrow["{F^\Sigma_c}", from=3-3, to=3-1]
	\arrow["{F^\emptyset}"', from=1-3, to=1-1]
\end{tikzcd}\]

This gives the following commutative diagram

\[\begin{tikzcd}
	{[F,G]_l^\Sigma} && {[F,G]_l^\emptyset} \\
	{[F,G]^\Sigma} && {[F,G]^\emptyset} \\
	{[F,G]_c^\Sigma} && {[F,G]_c^\emptyset}
	\arrow[ from=1-1, to=1-3]
	\arrow["{F^\Sigma}"{description}, from=2-3, to=2-1]
	\arrow["{F_c^\Sigma}", from=3-3, to=3-1]
	\arrow["{U_l^\Sigma}", from=1-1, to=2-1]
	\arrow["{L^\Sigma_c}", from=2-1, to=3-1]
	\arrow["{\overline{(-)}^\Sigma}"', curve={height=24pt}, from=1-1, to=3-1]
	\arrow["{U_l^\emptyset}"', from=1-3, to=2-3]
	\arrow["{L_c^\emptyset}"', from=2-3, to=3-3]
	\arrow["{\overline{(-)}^\emptyset}", curve={height=-18pt}, from=1-3, to=3-3]
\end{tikzcd}\]
	\\
For a $\Sigma$-natural lax transformation $\lambda:F\to G$, we know by the hypothesis that $\overline{\lambda}^\emptyset$ is strict and therefore it is a $\Sigma$-natural transformation. Therefore $\overline{\lambda}^\Sigma= F_c^\Sigma\overline{\lambda}^\emptyset=\overline{\lambda}^\emptyset$.
\end{proof}

For the rest of this section, let $\Sigma:=\emptyset$. We will write $[F,G]_\bullet$ to mean $[F,G]_\bullet^\emptyset$ and we will refer to $\emptyset$-natural general, lax and colax transformations as just \emph{general, lax and colax transformations} respectively. 

To obtain the constructions for $\overline{(-)}$ and $\underline{(-)}$, we will define new functors $F_\#, F_\#^\#$ and $F^\#$ and strict transformations \[\begin{tikzcd}
	& F \\
	{F_\#} && {F^\#} \\
	& {F_\#^\#}
	\arrow["{c}", from=3-2, to=2-1]
	\arrow["{l}"', from=3-2, to=2-3]
	\arrow["{c'}", from=2-1, to=1-2]
	\arrow["{l'}"', from=2-3, to=1-2]
\end{tikzcd}\]
such that $[F_\#,G]_s\cong [F,G]_c$; $[F_\#^\#,G]_s\cong [F,G]$ and $[F^\#,G]_s\cong [F,G]_l$. 

Using these isomorphisms, we can rewrite $\overline{(-)}:[F,G]_l\to [F,G]_c$ as \[[F,G]_l\cong [F^\#,G]_s\xrightarrow{-\circ l}[F^\#_\#,G]_s\xrightarrow{\mathrm{Lan}_{c}-}[F_\#,G]_s\cong [F,G]_c \]
and similarly, we can write $\underline{(-)}:[F,G]_c\to [F,G]_l$ as \[[F,G]_c\cong [F_\#,G]_s\xrightarrow{-\circ c}[F^\#_\#,G]_s\xrightarrow{\mathrm{Ran}_{l}-}[F^\#,G]_s\cong [F,G]_c .\]
If the extensions in these compositions are objectwise, we can give an explicit expression for $\overline{(-)}$ and $\underline{(-)}$. In this section we will discuss conditions for when this is the case.\newline

The enriched functor $F^\#:\mathcal{C}\to \textbf{Pos}$ is defined on objects by sending every object $A$ in $\mathcal{C}$ to the lax coend\footnote{Lax coends are explained in the Appendix \ref{A}} \[F^\#A:=\ointclockwise^{B\in \ob\mathcal{C}}\mathcal{C}(B,A)\times FB.\]
The functor $F^\#$ is called the \emph{lax morphism classifier} and have been discussed in \cite{blackwell,loregian,lack}.
The following proposition gives a concrete description of $F^\#A$ for $A\in \ob \mathcal{C}$.

We will now give an explicit description of this poset. For this, first define the following preorder $\mathcal{P}_A$, for an object $A$ in $\mathcal{C}$: 
\begin{itemize}
    \item The elements of $\mathcal{P}_A$ are pairs $(B\xrightarrow{g}A,y)$, where $g$ is a morphism in $\mathcal{C}$ and $y$ is an element of $FB$,
    \item We write $(B_1\xrightarrow{g_1}A,y_1) \preceq $ $(B_2\xrightarrow{g_2}A,y_2)$ if there exists a morphism $s:B_2\to B_1$ in $\mathcal{C}$ such that $g_1s\leq g_2$ and $y_1\leq Fs(y_2)$. 
\end{itemize}

\begin{prop}\label{4.2}
Let $A$ be an object of $\mathcal{C}$. Then $F^\#A$ is the poset induced\footnote{A preorder $P$ induces a poset by identifying elements $a$ and $b$ in $P$ with each other if $a\leq b$ and $b\leq a$.} by the preorder $\mathcal{P}_A$.
\end{prop}

\begin{proof}
Let $\hat{\mathcal{P}}_A$ denote the poset induced by the preorder $\mathcal{P}_A$.  We will now show that $\hat{\mathcal{P}}_A$ satisfies the universal property of lax coends. 

For $B\in \ob\mathcal{C}$, there clearly is an order-preserving map $e_B:\mathcal{C}(B,A)\times FB\to \hat{\mathcal{P}}_A$. A map $s:B_2\to B_1$ induces 
\[\begin{tikzcd}
	{\mathcal{C}(B_1,A)\times FB_2} && {\mathcal{C}(B_2,A)\times FB_2} \\
	\\
	{\mathcal{C}(B_1,A)\times FB_1} && {\hat{\mathcal{P}}_A}
	\arrow["{\text{Id}\times Fs}"{description}, from=1-1, to=3-1]
	\arrow["{\mathcal{C}(s,A)\times \text{Id}}", from=1-1, to=1-3]
	\arrow["{e_{B_1}}"', from=3-1, to=3-3]
	\arrow["{e_{B_2}}", from=1-3, to=3-3]
	\arrow["\preceq"{description}, draw=none, from=3-1, to=1-3]
\end{tikzcd}\]

Indeed, for $f:B_1\to A$ and $y\in FB_2$, we have that $$(f,Fs(y))\preceq (fs,y).$$ This means that the poset $\hat{\mathcal{P}}_A$ together with the maps $(e_B)_{B\in \mathrm{ob}\mathcal{C}}$ form a cowedge. We will now show that they form a \emph{universal} cowedge. To do this, consider another cowedge, i.e. a poset $\mathcal{R}$ toghether with order-preserving maps $(\Tilde{e}_B:\mathcal{C}(B,A)\times FB\to \mathcal{R})_{B\in \ob\mathcal{C}}$ such that for every morphism $s:B_2\to B_1$,  \[\begin{tikzcd}
	{\mathcal{C}(B_1,A)\times FB_2} && {\mathcal{C}(B_2,A)\times FB_2} \\
	\\
	{\mathcal{C}(B_1,A)\times FB_1} && {\mathcal{R}}
	\arrow["{\text{Id}\times Fs}"{description}, from=1-1, to=3-1]
	\arrow["{\mathcal{C}(s,A)\times \text{Id}}", from=1-1, to=1-3]
	\arrow["{\tilde{e}_{B_1}}"', from=3-1, to=3-3]
	\arrow["{\tilde{e}_{B_2}}", from=1-3, to=3-3]
	\arrow["\preceq"{description}, draw=none, from=3-1, to=1-3]
\end{tikzcd}\]

Let $e:\mathcal{P}_A\to \tilde{\mathcal{P}}$ be the map that sends $(B\xrightarrow{g}A,y)$ to $\tilde{e}_B(g,y)$. For $(g_1,y_1)\preceq (g_2,y_2)$ in $\mathcal{P}_A$, there exists a morphism $s:B_2\to B_1$ such that $g_1s\leq g_2$ and $y_1\leq Fs(y_2)$. Because $\tilde{e}_{B_1}$ and $\tilde{e}_{B_2}$ are order-preserving and because the maps $(\tilde{e}_B)_{B\in \mathrm{ob}\mathcal{C}}$ form a wedge, we find the following relations in $\mathcal{R}$: 

$$\tilde{e}_{B_1}(g_1,y_1)\leq \tilde{e}(g_1,Fs(y_2))\leq \tilde{e}_{B_2}(g_1s,y_2)\leq \tilde{e}_{B_2}(g_2,y_2).$$

This shows that $e$ is order-preserving, and therefore it induces an order-preserving map $\hat{e}:\hat{\mathcal{P}}_A\to \mathcal{R}$. This is the unique order-preserving map such that $e\circ e_B = \tilde{e}_B$ for all $B\in \ob\mathcal{C}$. This shows the universal property.
\end{proof}

Let $[\mathcal{C},\textbf{Pos}]_l$ be the category of enriched functors $\mathcal{C}\to \textbf{Pos}$ and lax transformations and let $[\mathcal{C},\textbf{Pos}]_s$ be the subcategory of enriched functors and strict transformations. The following result is Theorem 3.16 in \cite{blackwell} together with section 7.1.2 in \cite{loregian}. In \cite{blackwell} the result follows from a more general theorem. Here we will also give a direct proof. 
\begin{prop} \label{Blackwell}
The inclusion $[\mathcal{C},\mathbf{Pos}]_s\to [\mathcal{C},\mathbf{Pos}]_l$ has a left adjoint, which is given by the assignment $F\mapsto F^\#$.
\end{prop}
\begin{proof}
It is enough to show that $[F^\#,G]_s\cong [F,G]_l$ for all functors $F$ and $G$. 

Given a strict transformation $\tau: F^\#\to G$, we can define for every $A\in \ob\mathcal{C}$ a functor $\lambda_A:FA\to GA$ by sending $x$ to $\tau_A(1_A,x)$. These form a lax transformation. 

Given a lax transformation $\lambda:F\to G$, we can define a strict transformation $\tau: F^\#\to G$, by the assignment \[\tau_A(g:B\to A,y):=\lambda_B(Fg(y)).\] These define an isomorphism of posets. 
\end{proof}
The counit of the adjunction in Proposition \ref{Blackwell} is a strict transformation $$l':F^\#\to F$$ and is defined by $$l'_A:F^\#A\to FA: (g:B\to A,y)\mapsto Fg(y),$$ for all objects $A$ in $\mathcal{C}.$

Dually, we define the functor $F_\#:\mathcal{C}\to \textbf{Pos}$ by sending every object $A$ in $\mathcal{C}$ to the colax coend \[F^\#A:=\ointctrclockwise^{B:\mathcal{C}}\mathcal{C}(B,A)\times FB.\]

A construction dual to the one from Proposition \ref{4.2} can be given for this poset. We also have a dual version of Proposition \ref{Blackwell}, namely that the assignment $F\mapsto F_\#$ defines a left adjoint to the inclusion $[\mathcal{C},\mathbf{Pos}]_s\to [\mathcal{C},\mathbf{Pos}]_c$. The functor $F_\#$ is called the \emph{colax morphism classifier} and has been studied in \cite{blackwell,loregian,lack}.

The counit is a strict transformation $$c': F_\#\to F$$ defined by 
$$c'_A:F_\#A:FA: (g,y)\mapsto Fg(y),$$ for all objects $A$ in $\mathcal{C}$.

Finally, consider the functor $F^\#_\#:\mathcal{C}\to \textbf{Pos}$ that is defined by sending every object $A$ in $\mathcal{C}$ to \[F^\#_\#A:=\sum_{B\in \ob\mathcal{C}}\mathcal{C}(B,A)\times FB.\]

Similarly to Proposition \ref{Blackwell}, it can be shown that the assignment $$F\mapsto F_\#^\#$$ defines a left adjoint to the inclusion $[\mathcal{C},\textbf{Pos}]\to \textbf{Pos}^{\mathrm{ob}\mathcal{C}}$.

The unit $F\to F^\#$ is an element of $[F,F^\#]_l\subseteq [F,F^\#]$ and therefore it corresponds to a strict transformation $$l:F^\#_\#\to F^\#.$$ We have that $$l_A(g,y)=(g,y)$$ for all $A\in \ob\mathcal{C}$ and $(g,y)\in F^\#_\#A$.

Similarly, the unit $F\to F_\#$ is an element of $[F,F^\#]_c\subseteq [F,F^\#]$ and also corresponds to a strict transformation $$c:F^\#_\#\to F_\#$$ such that $$c_A(g:B\to A,y)=(g,y)$$ for all $A$ in $\mathcal{C}$ and $(g,y)$ in $F^\#_\#A$.

This gives us the strict transformations that we will need in the rest of this section: 
\[\begin{tikzcd}
	& F \\
	{F_\#} && {F^\#} \\
	& {F_\#^\#}
	\arrow["{c}", from=3-2, to=2-1]
	\arrow["{l}"', from=3-2, to=2-3]
	\arrow["{c'}", from=2-1, to=1-2]
	\arrow["{l'}"', from=2-3, to=1-2]
\end{tikzcd}\]
As explained above, writing the general, lax and colax transformations in terms of $F^\#$ and $F_\#^\#$ allows us to describe the operation $\overline{(-)}$ as a left extension. For a lax transformation $\lambda:F\to G$, let $\tilde{\lambda}:F^\#\to G$ be the corresponding strict transformation. For $A\in \ob\mathcal{C}$ and $x\in FA$, \emph{assuming} that the left Kan extension exist,  we can write \[\overline{\lambda}_A(x)=\mathrm{Lan}_c(\tilde{\lambda}\circ l)_A(1_A,x).\]
\emph{Suppose} now that this left extension is objectwise. We then would have that $\overline{\lambda}_A(x)$ is equal to \begin{align*}
    & \mathrm{colim}(c\downarrow (1_A,x)\to F_\#^\#A\xrightarrow{l}F^\#A\xrightarrow{\tilde{\lambda}}GA)\\
    &= \bigvee\left\{\tilde{\lambda}_A(g,y)\mid g:C\to A; y\in FC \textrm{ such that }Fg(y)\leq x\right\}\\
    &= \bigvee\left\{Gg(\lambda_C(y))\mid g:C\to A; y\in FC\textrm{ such that } Fg(y)\leq x\right\}\end{align*}

In the following two results (Proposition \ref{4.7} and Theorem \ref{monadformula}), we will give conditions for when this is indeed the case. More specifically, we will give conditions on the functors $F$ and $G$ such that the \textit{colaxification formula}, 

$$\overline{\lambda}_A(x) =\bigvee\left\{Gg(\lambda_C(y))\mid g:C\to A; y\in FC\textrm{ such that } Fg(y)\leq x\right\},$$

holds every lax transformation $\lambda:F\to G$ , object $A$ in $\mathcal{C}$ and $x\in FA$.

\begin{prop}\label{4.7}
For $\lambda\in [F,G]^\Sigma$ and $A\in \ob\mathcal{C}$, define $\tau_A:FA\to GA$ by \[\tau_A(x):=\bigvee\left\{Gg(\lambda_C(y))\mid g:C\to A; y\in FC\textrm{ such that } Fg(y)\leq x\right\},\]
for $x\in FA$. If $\tau$ is colax, then $\tau = L_c\lambda$. If moreover, $\lambda$ is lax, then $\tau=\overline{\lambda}$.
\end{prop}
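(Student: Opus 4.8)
The plan is to verify the two claims directly from the universal properties established in Sections \ref{2} and \ref{3}, treating $\tau$ as a candidate colax transformation and checking it satisfies the characterization of $L_c\lambda$ (and, under the extra hypothesis, of $\overline{\lambda}$). First I would unwind what $L_c\lambda$ means: by Corollary \ref{2.2} and its dual, $L_c:[F,G]^\Sigma\to[F,G]^\Sigma_c$ is the left adjoint to the inclusion $U_c$, so $L_c\lambda$ is the \emph{least} colax $\Sigma$-transformation above $\lambda$, i.e. $L_c\lambda\geq\lambda$ and for every colax $\sigma$ with $\sigma\geq\lambda$ we have $\sigma\geq L_c\lambda$. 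Since we are told to \emph{assume} $\tau$ is colax, it remains to show (a) $\tau\geq\lambda$, and (b) any colax $\sigma\geq\lambda$ satisfies $\sigma\geq\tau$.

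For (a), evaluate the defining join at the particular triple $g=1_A$, $y=x$ (legitimate since $F(1_A)(x)=x\leq x$): this contributes $G(1_A)(\lambda_A(x))=\lambda_A(x)$ to the supremum, so $\tau_A(x)\geq\lambda_A(x)$, giving $\tau\geq\lambda$. For (b), let $\sigma$ be any colax $\Sigma$-transformation with $\sigma\geq\lambda$. Fix $A$ and $x\in FA$, and take any $g:C\to A$ and $y\in FC$ with $Fg(y)\leq x$. Then using colaxness of $\sigma$ at $g$, monotonicity of $Gg$, the bound $\sigma\geq\lambda$ at $C$, and monotonicity of $\sigma_A$:
\[
Gg(\lambda_C(y))\leq Gg(\sigma_C(y))\leq \sigma_A(Fg(y))\leq\sigma_A(x).
\]
Taking the supremum over all such $(g,y)$ gives $\tau_A(x)\leq\sigma_A(x)$, hence $\sigma\geq\tau$. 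Combined with (a) this is exactly the universal property pinning down $L_c\lambda$, so $\tau=L_c\lambda$.

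For the final sentence, suppose in addition $\lambda$ is lax. Then by the definition from Section \ref{2}, $\overline{\lambda}=L_cU_l(\lambda)$, and $U_l(\lambda)$ is just $\lambda$ viewed as a general $\Sigma$-transformation; so $\overline{\lambda}=L_c\lambda=\tau$ by what we have just shown. I do not expect a genuine obstacle here: the content of the proposition is essentially the observation that the objectwise left Kan extension formula, when it happens to land in the colax subposet, computes the colax reflection. The one point requiring a little care is making sure the inequality chain in (b) only invokes properties $\sigma$ is guaranteed to have — colaxness (not strictness) at a general morphism $g\notin\Sigma$, and monotonicity of each $\sigma_A$, which holds because $\sigma_A$ is a functor between posets — and that the hypothesis ``$\tau$ is colax'' is used \emph{only} to know $\tau$ lies in the codomain poset $[F,G]^\Sigma_c$, not anywhere in the comparison argument.
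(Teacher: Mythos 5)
Your proof is correct, but it takes a different route from the paper's. The paper proves this proposition inside the machinery of Section \ref{4}: it transports $\lambda$ and $\tau$ across the isomorphisms $[F,G]^\Sigma\cong[F^\#_\#,G]_s$ and $[F,G]^\Sigma_c\cong[F_\#,G]_s$ to strict transformations $\tilde\lambda:F^\#_\#\to G$ and $\tilde\tau:F_\#\to G$, and then verifies that $\tilde\tau$ is the left extension of $\tilde\lambda$ along $c$, so that $\tau=L_c\lambda$ via the identification of $L_c$ with $\mathrm{Lan}_c(-\circ\,\cdot\,)$. You instead work directly with the order-theoretic adjunction $L_c\dashv U_c$ from (the dual of) Corollary \ref{2.2}: since $U_c$ is a full inclusion of posets, $L_c\lambda$ is the least colax $\Sigma$-transformation above $\lambda$, and you check that $\tau$ has exactly that property. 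The two arguments contain the same essential inequalities --- your evaluation at $(1_A,x)$ matches the paper's verification that $\tilde\tau\circ c\geq\tilde\lambda$, and your chain $Gg(\lambda_C(y))\leq Gg(\sigma_C(y))\leq\sigma_A(Fg(y))\leq\sigma_A(x)$ matches the paper's estimate $\nu_C(1_C,y)\geq(\nu\circ c)_C(g,y')\geq Gg\lambda_{C'}(y')$ --- but yours is more elementary and self-contained, needing neither the coend functors nor the Kan-extension reformulation. What the paper's packaging buys is continuity with the rest of Section \ref{4}, where the same $F_\#$/$F^\#_\#$ setup is reused to analyse when the extension is objectwise (Theorem \ref{monadformula}); your version is the cleaner proof of this particular proposition in isolation. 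Your closing remarks are also accurate: the hypothesis that $\tau$ is colax is used only to place $\tau$ in $[F,G]^\Sigma_c$, and the comparison step invokes only colaxness of $\sigma$ at arbitrary morphisms together with monotonicity of $\sigma_A$ and $Gg$, all of which are guaranteed by the standing assumptions.
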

\begin{proof}
Let $\tilde{\lambda}$ be the strict transformation $F^\#_\#\to G$, corresponding to $\lambda$ and let $\tilde{\tau}$ be the strict transformation $F_\#\to G$ corresponding to $\tau$. We want to show that $\tilde{\tau}$ is the left extension of $\tilde{\lambda}$ along $c$. We first show that $\tilde{\tau}\circ c \geq \tilde{\lambda}$, i.e. 

\[\begin{tikzcd}
	{F_\#^\#} && G \\
	{F_\#}
	\arrow["c"', from=1-1, to=2-1]
	\arrow["{\tilde{\tau}}"', from=2-1, to=1-3]
	\arrow[""{name=0, anchor=center, inner sep=0}, "{\tilde{\lambda}}", from=1-1, to=1-3]
	\arrow["\geq"{marking}, draw=none, from=0, to=2-1]
\end{tikzcd}\]

For $(f:C\to A,y)\in F^\#_\#A$, \begin{align*}
    (\tilde{\tau}\circ c)_A(f,y)= Gf(\tau_C(y)) &= Gf\bigvee\left \{Gg(\lambda_{C'}(y'))\mid g:C'\to C; y'\in FC'\textrm{ such that } Fg(y')\leq y\right\}\\
     & \geq \bigvee\left \{ Gf(Gg(\lambda_{C'}(y')))\mid g:C'\to C; y'\in FC'\textrm{ such that } Fg(y')\leq y\right\}\\
     & \geq Gf(\lambda_C(y))= \tilde{\lambda}_A(f,y),
\end{align*} which means that $\tilde{\tau}\circ c\geq \tilde{\lambda}$. 

Suppose now that $\nu:F_\#\to G$ is a strict transformation such that $\nu\circ c \geq \tilde{\lambda}$, i.e. \[\begin{tikzcd}
	{F_\#^\#} && G \\
	{F_\#}
	\arrow["c"', from=1-1, to=2-1]
	\arrow["\nu"', from=2-1, to=1-3]
	\arrow[""{name=0, anchor=center, inner sep=0}, "{\tilde{\lambda}}", from=1-1, to=1-3]
	\arrow["\geq"{marking}, draw=none, from=0, to=2-1]
\end{tikzcd}\]

We will now show that $\tilde{\tau}\leq \nu$. Let $(f:C\to A,y)\in F_\#A$. Consider $g:C'\to C$ and $y'\in FC'$ such that $Fg(y')\leq y$, then $(g,y')\in F^\#_\#C$ and  $c_C(g,y')\leq (1_C,y)$ in $F_\#C$ and therefore \[\nu_C(1_C,y)\geq (\nu\circ c)_C(g,y') \geq \tilde{\lambda}_C(g,y')=Gg\lambda_{C'}(y').\] Taking the supremum over all such $(g,y')$, we have \[\nu_C(1_C,y)\geq \tau_C(y)\] and by applying $Gf$ to both sides,  \[\nu_A(f,y)=Gf\nu_A(1_C,y)\geq Gf\tau_C(y)=\tilde{\tau}_A(f,g).\] Here we used that $\nu$ is strict and the definition of $\tilde{\tau}$. This shows that $\tilde{\tau}$ is the left extension of $\tilde{\lambda}\circ l$ along $c$.
\end{proof}

Let $A\in \ob\mathcal{C}$ and $x\in FA$. Consider the subposets of $F^\#A$: $$l'_A\downarrow x := \{(g:C\to A,y)\mid Fg(y)\leq x\}$$

and 

$$(l'_A\downarrow x)_{=}:= \{(g:C\to A,y)\mid Fg(y)\leq x\}.$$


\begin{thm}[Colaxification formula] \label{monadformula}
Let $\kappa$ be a regular cardinal. Suppose that 

\begin{itemize}
    \item [($G_1)$]$Gf$ preserves $\kappa$-directed joins for all morphisms $f$ in $\mathcal{C}$,
    \item [$(F_1)$]$(l'_A\downarrow x)_{=}\subseteq l'_A\downarrow x$ is  cofinal \footnote{A subset $S$ of a poset $P$ is \textbf{cofinal} if for every $p\in P$ there exists an $s\in S$ such that $p\leq s$.},
    \item [$(F_2)$]$\mathcal{C}$ has and $F$ preserves $\kappa$-wide pullbacks.
    \end{itemize}
Then $\overline{\lambda}$ is a strict transformation for every lax transformation $\lambda:F\to G$, i.e. $(F,G,\emptyset)$ satisfies the strictness condition. 

Furthermore, the colaxification formula holds, i.e. for $A\in \ob\mathcal{C}$ and $x\in FA$, \[\overline{\lambda}_A(x)=\bigvee\left\{Gg(\lambda_C(y))\mid g:C\to A; y\in FC\textrm{ such that }Fg(y)\leq x\right\}.\]
\end{thm}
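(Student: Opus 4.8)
The plan is to apply Proposition~\ref{4.7}: it suffices to show that the candidate formula
\[\tau_A(x):=\bigvee\left\{Gg(\lambda_C(y))\mid g:C\to A;\ y\in FC\textrm{ such that }Fg(y)\leq x\right\}\]
defines a \emph{colax} (in fact strict) transformation, since then $\tau=\overline{\lambda}$ by that proposition. So the whole argument reduces to two things: (i) verifying that $\tau$ is a well-defined $\Sigma$-transformation that is colax, and (ii) upgrading colax to strict. For (i), the colax inequality $Gf\circ\tau_A\leq\tau_B\circ Ff$ for a general $f:A\to B$ is essentially formal — if $Fg(y)\leq x$ then $F(fg)(y)=Ff(Fg(y))\leq Ff(x)$, so every element $Gg(\lambda_C(y))$ appearing in $\tau_A(x)$ gives $G(fg)(\lambda_C(y))=Gf(Gg(\lambda_C(y)))$ appearing in $\tau_B(Ff(x))$, and applying $Gf$ to the join of the former is $\leq$ the join of the latter (here I use only that $Gf$ is order-preserving, plus that $Gf$ of a join is $\leq$ the join of $Gf$'s, which is automatic). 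The equality on $\Sigma$ needs $Gf$ to preserve joins, which holds by hypothesis.

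The real work is (ii), showing $Gf\circ\tau_A\geq\tau_B\circ Ff$ for all $f:A\to B$, and this is where $(G_1)$, $(F_1)$, $(F_2)$ enter. Fix $f:A\to B$ and $x\in FA$; I must show that for every $g:C\to B$ and $y\in FC$ with $Fg(y)\leq Ff(x)$, the element $Gg(\lambda_C(y))$ is $\leq Gf(\tau_A(x))$. The idea is to pull back: using $(F_2)$, form the pullback of $g$ along $f$ in $\mathcal{C}$, obtaining $P$ with maps $p:P\to A$, $q:P\to C$ and a 2-cell $fp\Rightarrow gq$; since $F$ preserves this pullback, the pair $(x,y)$ — which become comparable in $FB$ after pushing forward — should lift to an element $z\in FP$ with $Fp(z)\leq x$ (so that $(p,z)$ contributes to $\tau_A(x)$) and $Fq(z)$ related to $y$. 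The subtlety is that $Fg(y)\leq Ff(x)$ is only an \emph{inequality}, not an equality, so a naive pullback of $F$ won't directly produce the lift; this is exactly what condition $(F_1)$ is designed to handle — the subcategory $(l'_A\downarrow x)_{=}$ of objects with $Fg(y)=x$ is weakly final in $l'_A\downarrow x$, which lets me replace an inequality-witness by an equality-witness at the cost of a further morphism, over which the join is unchanged. I expect the argument to run: reduce to the case $Fg(y)=Ff(x)$ using $(F_1)$; then use the genuine pullback preservation $(F_2)$ to get $z\in FP$ with $Fp(z)=x$ and $Fq(z)=y$ (or $\leq y$); conclude $Gg(\lambda_C(y))=Gg\,\lambda_C(Fq(z))\cdot(\text{2-cell})=G(gq)(\lambda_P(\cdots))\leq$... matching $Gf(Gp(\lambda_P(z)))=Gf$ of a term in $\tau_A(x)$.

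Condition $(G_1)$ — preservation of $\kappa$-directed joins — is needed because $\tau_A(x)$ is a \emph{large} join, not finite, and to commute $Gf$ past it I need either full join-preservation (which I don't have for general $f$) or to exhibit the indexing poset as $\kappa$-directed, which is where the $\kappa$-wide pullbacks in $(F_2)$ produce the right cofinality: a $\kappa$-small family of witnesses $(g_i,y_i)$ can be amalgamated via a $\kappa$-wide pullback into a single witness dominating them all, making the diagram $\kappa$-directed. So the structure is: $(F_2)$ gives directedness of the diagram defining $\tau_A(x)$; $(G_1)$ then lets $Gf$ pass through the join; $(F_1)+(F_2)$ together let me match each term of $\tau_B(Ff(x))$ against $Gf$ of a term of $\tau_A(x)$. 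The main obstacle, and the step I'd spend the most care on, is the inequality-to-equality reduction via weak finality of $(l'_A\downarrow x)_=$ combined with correctly tracking the 2-cells through $F$'s preservation of $\kappa$-wide pullbacks — getting the variances and the direction of all the comparison morphisms consistent is the delicate bookkeeping here. Once $\tau$ is shown strict, Proposition~\ref{4.7} (its last sentence, using that $\lambda$ is lax) immediately gives $\tau=\overline{\lambda}$, completing the proof.
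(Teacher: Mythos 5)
Your overall strategy coincides with the paper's: define $\tau$ by the displayed formula, show it is colax so that Proposition~\ref{4.7} yields $\tau=\overline{\lambda}$, then show it is also lax and hence strict; and your term-matching argument for the $\geq$ direction (use $(F_1)$ to replace an inequality-witness over $Ff(x)$ by an equality-witness, pull back along $f$, and lift through $F$ using $(F_2)$) is exactly the paper's. However, step (i) contains a genuine error: for an order-preserving map $Gf$ the automatic inequality is $Gf\bigl(\bigvee S\bigr)\geq \bigvee Gf(S)$, not $\leq$. So the colax inequality $Gf\circ\tau_A\leq\tau_B\circ Ff$ is \emph{not} formal; it is exactly the point where $Gf$ must be commuted past the large join $\tau_A(x)=\bigvee S_{A,x}$, and hence exactly where $(G_1)$ together with the $\kappa$-directedness of $S_{A,x}$ is consumed. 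Your proposal assigns these ingredients to the $\geq$ direction instead, where they are not needed: there one only wants a lower bound on $Gf(\tau_A(x))$, and $Gf(\tau_A(x))\geq Gf(\text{any single term of }\tau_A(x))$ already suffices once each term of $\tau_B(Ff(x))$ has been matched against such a term. As written, your proof of colaxity collapses to a false one-line argument, and the hypotheses you invoke for laxity are doing no work there.

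Two smaller points. The $\kappa$-directedness of $S_{A,x}$ is itself a nontrivial lemma using \emph{both} $(F_1)$ and $(F_2)$: given a $\kappa$-small family $(g_i,y_i)$ one must first upgrade each to an equality-witness $(\tilde g_i,\tilde y_i)$ with $F\tilde g_i(\tilde y_i)=x$ via $(F_1)$, since only then do the $\tilde y_i$ form a compatible tuple in the limit of the $F\tilde C_i$ over $FA$ that preservation of the $\kappa$-wide pullback lets you lift; your summary credits directedness to $(F_2)$ alone. And in the $\geq$ direction the (binary) pullback must be taken of the equality-witness $\tilde g$ against $f$, so the $(F_1)$-reduction has to come first --- your second pass gets this order right even though your first description pulls back $g$ itself. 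With the roles of the hypotheses in the two inequalities corrected, the argument goes through as in the paper.
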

\begin{proof}
For $A\in \ob\mathcal{C}$ and $x\in FA$ denote \[S_{A,x}:=\{Gg\lambda_C(y)\mid g:C\to A; y\in FC\textrm{ such that } Fg(y)\leq x\}\]
and write $\tau_A(x):=\bigvee S_{A,x}$. We will now show that $S_{A,x}$ is $\kappa$-directed.  

Let $I$ be a set such that $\lvert I\rvert < \kappa$. And consider a collection $(g_i:C_i\to A,y_i)_{i\in I}$ in $l'_A\downarrow x$.

From $(F1)$ it follows that for every $i\in I$, there exists a $(\tilde{g}_i:\tilde{C}_i\to A,\tilde{y}_i) \in (l'_A\downarrow x)_{=}$ such that 

$$(g_i,y_i)\preceq (\tilde{g}_i,\tilde{y}_i),$$
i.e. for every $i\in I$ there is a map $s_i:\tilde{C}_i\to C_i$ such that $g_is_i\leq \tilde{g}_i$ and such that $y_i\leq Fs_i(\tilde{y}_i)$.\footnote{Note that we might need the axiom of choice here.} 

The maps $(\tilde{g}_i)_{i\in I}$ form a $\kappa$-wide pullback diagram. Let $C$ be the pullback of this diagram and let $(p_i:C\to \tilde{C}_i)_{i\in I}$ be the collection of projections. \[\begin{tikzcd}
	& \vdots &  \\
	& {\tilde{C}_i} \\
	C & \vdots & A \\
	& {\tilde{C}_j} \\
	& \vdots & 
	\arrow["{\tilde{g_j}}"', from=4-2, to=3-3]
	\arrow["{\tilde{g_i}}", from=2-2, to=3-3]
	\arrow["{p_i}", from=3-1, to=2-2]
	\arrow["{p_j}"', from=3-1, to=4-2]
	\arrow["\lrcorner"{anchor=center, pos=0.125, rotate=45}, draw=none, from=3-1, to=3-3]
\end{tikzcd}\]
Because $F\tilde{g}_i(\tilde{y}_i)= x$ for all $i\in I$, we have that $(\tilde{y}_i)_{i\in I}\in \bigtimes_{FA}FC_i$. Since $F$ preserves this limit, there exists a unique $y\in FC$ such that $Fp_i(y)=\tilde{y}_i$. Let $g:C\to A$ be the morphism that is equal to $\tilde{g}_ip_i$ for all $i\in I$. We find for all $i\in I$ that \begin{align*}
    Gg_i(\lambda_{C_i}(y_i))& \leq Gg_i(\lambda_{C_i}(Fs_i(\tilde{y}_i)))\\
    & =Gg_i(\lambda_{C_i}(Fs_iFp_i(y)))\\
    &\leq (Gg_is_ip_i)(\lambda_C(y))\\
    & \leq (G\tilde{g}_ip_i)(\lambda_C(y))= Gg\lambda_C(y)
\end{align*}
We conclude that $S_{A,x}$ is $\kappa$-directed. Using $(G_1)$, for a morphism $f:A\to B$, we see that \begin{align*}
    Gf\tau_A(x) & = \bigvee\left\{Gf(Gg(\lambda_{C}(y)))\mid g:C\to A; y\in FC\textrm{ such that } Fg(y)\leq x\right\}\\
    & \leq \bigvee S_{B,Ff(x)}= \tau_B(Ff(x))).\end{align*}
This shows that $\tau$ is a colax transformation. It follows from Proposition \ref{4.7} that $\overline{\lambda}=\tau$. This shows the second claim. 

To prove the first claim we need to show that for $f:A\to B$ \[L:=\bigvee\left\{Gf(Gg(\lambda_{C}(y)))\mid g:C\to A; y\in FC\textrm{ such that } Fg(y)\leq x\right\}\geq \bigvee S_{B,Ff(x)}\]

Consider $(g:C\to B, y)$ in $l'_B\downarrow Ff(x)$. Then by $(F_1)$ there exist $(\tilde{g}:\tilde{C}\to B,\tilde{y})\in (l'_B\downarrow Ff(x))_{=}$ such that $(g,y)\preceq (\tilde{g},\tilde{y}),$
i.e. there is a map $s:\tilde{C}\to C$ such that $gs\leq \tilde{g}$ and $y\leq Fs(\tilde{y})$.


Let $P$ be the pullback of $f$ and $\tilde{g}$ and let $p_1:P\to A$ and $p_2:P\to \tilde{C}$ be the projection maps. \[\begin{tikzcd}
	P && A \\
	\\
	{\tilde{C}} && B
	\arrow["f"{description}, from=1-3, to=3-3]
	\arrow["{\tilde{g}}"{description}, from=3-1, to=3-3]
	\arrow["{p_2}"{description}, from=1-1, to=3-1]
	\arrow["{p_1}"{description}, from=1-1, to=1-3]
	\arrow["\lrcorner"{anchor=center, pos=0.125}, draw=none, from=1-1, to=3-3]
\end{tikzcd}\]
Clearly $(x,\tilde{y})$ is an element of $FA\times_{FB}F\tilde{C}$. Because $F$ preserves pullbacks, there exists a unique $z\in F(A\times_B\tilde{C})$ such that $Fp_1(z)=x$ and $Fp_2(z)=\tilde{y}$.  We now have \begin{align*}
    Gg(\lambda_C(y)) &\leq Gg(\lambda_C(Fs(\tilde{y}))) \\
    & = Gg(\lambda_C(FsFp_2(z)))\\
    & \leq (Ggsp_2)(\lambda_P(z))\\
    & \leq( G\tilde{g}p_2)(\lambda_P(z))\\
    & = Gf(Gp_1(\lambda_P(z)))\leq L.
\end{align*}
Since $(g,y)$ was arbitrary, this shows that $\bigvee S_{B,Ff(x)}\leq L$. This proves that $\overline{\lambda}$ is a strict transformation.
\end{proof}

\begin{rem}
    In the case of Theorem \ref{monadformula}, $(F,G,\emptyset)$ satisfies the strictness conditions. Therefore, Proposition \ref{4.1} can be applied. \end{rem}
\section{The extension formula}\label{5}
In this section we will give sufficient conditions for extensions to be objectwise. This will allow us to give concrete formulas for the extensions of $\Sigma$-natural (co)lax transformations that were discussed in Section \ref{3}. In Section \ref{7} we will use these constructions and result to extend premeasures to measures, using the representation of (pre)measures as strict transformations described in Section \ref{6}. 

Let $\mathcal{C}$ be a small poset-enriched category and let $\Sigma$ be a collection of morphisms in $\mathcal{C}$. Let $F,G,H$ be enriched functors $\mathcal{C}\to \textbf{Pos}$. Suppose that $GA$ is a complete for all $A\in \ob\mathcal{C}$ and that $Gf$ preserves meets and joins for all $f\in \Sigma$ and let $\iota:F\to H$ be a strict transformation.

We will start with a result on the objectwiseness of extensions of $\Sigma$-natural \emph{general} transformations, this is the content of Theorem \ref{5.1}. In theorem \ref{objectwisesigmacolax}, we will give sufficient conditions for $\Sigma$-natural colax transformations to be objectwise.

\begin{thm}\label{5.1}
Suppose that $Ff$ and $Hf$ have left adjoints $(Ff)_*$ and $(Hf)_*$ for all $f:A\to B$ in $\Sigma$, such that the following square commutes\[\begin{tikzcd}
	FA && FB \\
	\\
	HA && HB
	\arrow["{\iota_A}"{description}, shift right=2, from=1-1, to=3-1]
	\arrow["{\iota_B}"{description}, from=1-3, to=3-3]
	\arrow["{(Ff)_*}"{description}, from=1-3, to=1-1]
	\arrow["{(Hf)_*}"{description}, from=3-3, to=3-1]\end{tikzcd}\]
 Then left extensions of $\Sigma$-natural transformations along $\iota$ are objectwise. 
\end{thm}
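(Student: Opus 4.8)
The plan is to build the left extension by hand as the objectwise (pointwise) left Kan extension and then verify it has the required universal property; objectwiseness is then automatic. Since $GA$ is a complete poset it is in particular cocomplete, so the pointwise left Kan extension of the functor $\lambda_A\colon FA\to GA$ along $\iota_A\colon FA\to HA$ exists and, being a colimit valued in a poset, is computed by a join (see e.g. \cite{riehl}):
\[
(\mathrm{Lan}_{\iota_A}\lambda_A)(h)\;=\;\bigvee\bigl\{\lambda_A(x)\;\bigm|\;x\in\ob FA,\ HA(\iota_Ax,h)\neq\emptyset\bigr\},\qquad h\in HA.
\]
Set $\mu_A:=\mathrm{Lan}_{\iota_A}\lambda_A$ and $\mu:=(\mu_A)_{A\in\ob\mathcal C}$. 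The left extension $\mathrm{Lan}_\iota\lambda$ exists by Proposition \ref{3.3} and is unique, so it suffices to show that $\mu$ is a $\Sigma$-transformation satisfying the defining property of the left extension; then $(\mathrm{Lan}_\iota\lambda)_A=\mu_A=\mathrm{Lan}_{\iota_A}\lambda_A$, which is exactly objectwiseness.

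The core of the argument is showing $Gf\circ\mu_A=\mu_B\circ Hf$ for every $f\colon A\to B$ in $\Sigma$. Since $Gf$ preserves joins, $\lambda$ is a $\Sigma$-transformation ($Gf\lambda_A=\lambda_BFf$), and $\iota$ is strict ($Hf\iota_A=\iota_BFf$), both sides unfold into joins of terms of the form $\lambda_B(-)$:
\[
Gf(\mu_A(h))=\bigvee\{\lambda_B(Ff\,x)\mid HA(\iota_Ax,h)\neq\emptyset\},\qquad
\mu_B(Hf(h))=\bigvee\{\lambda_B(x')\mid HB(\iota_Bx',Hf\,h)\neq\emptyset\}.
\]
For $Gf\circ\mu_A\leq\mu_B\circ Hf$ one uses $\iota_B(Ff\,x)=Hf(\iota_Ax)$: applying $Hf$ to a morphism $\iota_Ax\to h$ shows every generator on the left occurs among the generators on the right. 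The reverse inequality is where the hypothesis enters. Given $x'$ with a morphism $\iota_Bx'\to Hf\,h$, adjointness $(Hf)_*\dashv Hf$ transposes it to a morphism $(Hf)_*(\iota_Bx')\to h$ in $HA$, and the commuting square $\iota_A(Ff)_*=(Hf)_*\iota_B$ identifies its source with $\iota_A((Ff)_*x')$; so $x:=(Ff)_*x'$ admits a morphism $\iota_Ax\to h$, while the unit of $(Ff)_*\dashv Ff$ gives a morphism $x'\to Ff(Ff)_*x'=Ff\,x$, whence $\lambda_B(x')\leq\lambda_B(Ff\,x)$ in the poset $GB$. Thus each generator on the right lies below a generator on the left, and taking joins yields $\mu_B\circ Hf\leq Gf\circ\mu_A$. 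Hence equality holds and $\mu\in[H,G]^\Sigma$.

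Finally, the universal property (equivalently, that $\mathrm{Lan}_\iota$ behaves as the left adjoint of $-\circ\iota$ in the sense of Lemma \ref{lem}) follows formally from the colimit formula: $\mu\circ\iota\geq\lambda$ because $\lambda_A(x)$ is itself a generator of $\mu_A(\iota_Ax)$, and if $\mu'\in[H,G]^\Sigma$ satisfies $\mu'\circ\iota\geq\lambda$, then for $h\in HA$ and any $x$ with a morphism $\iota_Ax\to h$ one has $\lambda_A(x)\leq\mu'_A(\iota_Ax)\leq\mu'_A(h)$ by functoriality of $\mu'_A$, so taking the join over all such $x$ gives $\mu_A(h)\leq\mu'_A(h)$, i.e. $\mu\leq\mu'$. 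Therefore $\mu=\mathrm{Lan}_\iota\lambda$ and this extension is objectwise. The one genuine obstacle I anticipate is the reverse inequality in the middle step — matching each generator $\lambda_B(x')$ of $\mu_B(Hf\,h)$ with a larger generator $\lambda_B(Ff\,x)$ of $Gf(\mu_A(h))$ — which is precisely the point the two adjoints together with the compatibility square are there to supply; everything else is bookkeeping with the pointwise colimit formula and join-preservation.
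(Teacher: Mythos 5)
Your proof is correct and follows essentially the same route as the paper: both identify the extension with the pointwise Kan extension $\mathrm{Lan}_{\iota_A}\lambda_A$ and reduce the problem to showing these components form a $\Sigma$-transformation, using join-preservation of $Gf$, the $\Sigma$-naturality of $\lambda$, and the commuting square of left adjoints. Your elementwise matching of generators via the transposition under $(Hf)_*\dashv Hf$ and the unit of $(Ff)_*\dashv Ff$ is exactly the unpacked form of the paper's argument that the comparison functor $\iota_A\downarrow h\to\iota_B\downarrow Hf(h)$ is a right adjoint and hence final.
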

\begin{proof}
Let $\tau:F\to G$ be a $\Sigma$-natural transformation. It is enough to show that $(\mathrm{Lan}_{\iota_A}\tau_A)_{A\in \ob\mathcal{C}}$ is a $\Sigma$-natural general transformation. Let $f:A\to B$ be in $\Sigma$ and let $x\in HA$, 
\begin{align*}
    Gf \mathrm{Lan}_{\iota_A}\tau_A(x) &= Gf\mathrm{colim}(\iota_A\downarrow x \to FA\xrightarrow{\tau_A}GA) \\
    & =\mathrm{colim}(\iota_A\downarrow x \to FA\xrightarrow{\tau_A}GA\xrightarrow{Gf}GB)\\
    & =\mathrm{colim}(\iota_A\downarrow x \to FA\xrightarrow{Ff}FB\xrightarrow{\tau_B}GB)\\
    & = \mathrm{colim}(\iota_A\downarrow x\to \iota_B\downarrow Hf(x) \to FB\xrightarrow{\tau_B}GB)\\
    & = \mathrm{colim}(\iota_B\downarrow Hf(x) \to FB\xrightarrow{\tau_B}GB)=\mathrm{Lan}_{\iota_B}\tau_B(Hf(x))
\end{align*}
The second equality follows from the fact that $Gf$ preserves joins, since $f\in \Sigma$. Because $Gf\tau_A=\tau_BFf$, we have the third equality. Equality four, follows from the fact that $\iota$ is a strict transformation.

Using the hypothesis, it can be shown that \[\iota_A\downarrow x \to \iota_B\downarrow Hf(x) \cong \iota_A(Ff)_*\downarrow x\] is right adjoint and therefore final. The fourth equality now follows.
\end{proof}
\begin{thm}\label{objectwisesigmacolax}
Let $\kappa$ be a regular cardinal.
\begin{itemize}
    \item [$(G1)$] Suppose that $Gf$ preserves $\kappa$-directed joins for all morphisms $f$ in $\mathcal{C}$.
    \item [$(\iota1)$] Suppose that $\iota_A$ is $\kappa$-flat for all $A\in \ob\mathcal{C}$, i.e. $\iota_A\downarrow x$ is $\kappa$-directed for all $x\in FA$.
    \item [$(\iota2)$] $Ff$ and $Hf$ have left adjoints $(Ff)_*$ and $(Hf)_*$ for all $f:A\to B$ in $\Sigma$, such that the following square commutes \[\begin{tikzcd}
	FA && FB \\
	\\
	HA && HB
	\arrow["{\iota_A}"{description}, shift right=2, from=1-1, to=3-1]
	\arrow["{\iota_B}"{description}, from=1-3, to=3-3]
	\arrow["{(Ff)_*}"{description}, from=1-3, to=1-1]
	\arrow["{(Hf)_*}"{description}, from=3-3, to=3-1]\end{tikzcd}\]
\end{itemize}
Then left extensions of $\Sigma$-natural colax transformations along $\iota$ are objectwise.
\end{thm}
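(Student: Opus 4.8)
The plan is to reduce the objectwise-ness of left extensions of colax $\Sigma$-transformations to the already-established objectwise-ness of left extensions of general $\Sigma$-transformations (the previous theorem, which only needs hypothesis $(\iota2)$). By Proposition \ref{3.6}, it suffices to check that the square there commutes, i.e. that $R_l(-\circ \iota) = (R_l-)\circ \iota$ as maps $[H,G]^\Sigma \to [F,G]_l^\Sigma$; but that formulation is about lax transformations, whereas here we want colax. So instead I would dualize: by the dual of Proposition \ref{3.6} it is enough to show that $L_c$ commutes with restriction, i.e. that for every $\Sigma$-transformation $\tau:H\to G$ one has $L_c(\tau\circ \iota) = (L_c\tau)\circ \iota$. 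Equivalently, writing $\sigma := L_c\tau$ and using the explicit colax-coreflection formula from Proposition \ref{4.7} (valid once we know the relevant transformation is colax), the claim becomes the pointwise identity
\[
\bigvee\{Gg(\tau_{C}(y)) \mid g:C\to A,\ y\in FC,\ Fg(y)\le \iota_A(x)\}
= \bigvee\{Gg((\tau\circ\iota)_{C}(y)) \mid g:C\to A,\ y\in FC,\ Hg(\iota_C y)\le\iota_A(x)\}
\]
for all $A\in\ob\mathcal C$ and $x\in FA$ — but since the left side ranges over all of $H$ rather than the image of $\iota$, this needs an argument.

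Concretely, the steps I would carry out are: (1) invoke the preceding theorem to get that left extensions of general $\Sigma$-transformations along $\iota$ are objectwise, so $(\mathrm{Lan}_\iota\tau)_A = \mathrm{Lan}_{\iota_A}\tau_A = \mathrm{colim}(\iota_A\downarrow x \to FA \xrightarrow{\tau_A} GA)$; (2) for a colax $\Sigma$-transformation $\sigma$, express $\sigma = L_cU_c\sigma$ and use the dual of Proposition \ref{3.6}, reducing the problem to showing $L_c$ commutes with $-\circ\iota$; (3) prove that commutation using the colax-coend description, exactly as the previous theorem's proof manipulates the comma category $\iota_A\downarrow x$ via the adjoints $(Ff)_*,(Hf)_*$ from $(\iota2)$, together with hypothesis $(\iota1)$ ensuring $\iota_A\downarrow x$ is $\kappa$-directed so that the colimit defining $L_c$ is a $\kappa$-directed join, which by $(G1)$ is preserved by every $Gf$ — this last point is what makes $L_c\sigma$ land back among colax transformations with the explicit formula of Proposition \ref{4.7}; (4) assemble: the objectwise formula for $\mathrm{Lan}_{\iota}$ of the general $\Sigma$-transformation underlying $\sigma$, combined with $L_c$-commutation and the isomorphism $[F,G]^\Sigma_c \cong (\text{algebras})$, yields that $\mathrm{Lan}_\iota\sigma$ is computed objectwise.

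The main obstacle I anticipate is step (3): verifying that the naturality square in $(\iota2)$, which a priori only controls behaviour along morphisms $f\in\Sigma$, interacts correctly with the $\kappa$-directedness from $(\iota1)$ to show that the pointwise-defined colax-coreflection of $\tau\circ\iota$ agrees with the restriction of the colax-coreflection of $\tau$. The subtlety is that the comma category $\iota_A\downarrow x$ used to compute $\mathrm{Lan}_{\iota_A}$ and the comma category $(l'_A\downarrow x)$ used to compute $L_c$ (from Proposition \ref{4.7}) must be shown to produce the same join; this requires checking that the relevant functor between them is final, which is where the adjoints $(Ff)_*$ and the commuting square genuinely enter, in the same way they do in the proof of the preceding theorem. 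I expect the $\kappa$-cardinality bookkeeping (why $|\iota_A\downarrow x|$-indexed joins suffice, and why $\kappa$-directedness is preserved under the relevant base change) to be routine once the finality of the comparison functor is in place, so that finality argument is the real content.
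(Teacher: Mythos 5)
There is a genuine gap in your reduction step. The dual of Proposition \ref{3.6} does not say what you need it to say. In Proposition \ref{3.6} the hypothesis is that the \emph{right} adjoints $R_l$ commute with restriction, and the proof deduces that the corresponding \emph{left} adjoints commute, i.e. $U_l\circ\mathrm{Lan}_\iota^{\mathrm{lax}}=\mathrm{Lan}_\iota^{\mathrm{gen}}\circ U_l$; that is why the conclusion concerns \emph{left} extensions of \emph{lax} transformations. Dualizing (lax $\leftrightarrow$ colax, $R_l\leftrightarrow L_c$, joins $\leftrightarrow$ meets) also swaps left and right extensions: the hypothesis $L_c(-\circ\iota)=(L_c-)\circ\iota$ is a commutation of \emph{left} adjoints, so what it buys you is commutation of their right adjoints, namely $U_c\circ\mathrm{Ran}_\iota^{\mathrm{colax}}=\mathrm{Ran}_\iota^{\mathrm{gen}}\circ U_c$ — objectwiseness of \emph{right} extensions of colax $\Sigma$-transformations, plus only properness for left ones. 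What you actually need for this theorem is $U_c\circ\mathrm{Lan}_\iota^{\mathrm{colax}}=\mathrm{Lan}_\iota^{\mathrm{gen}}\circ U_c$, and this mixes a left adjoint ($\mathrm{Lan}_\iota$) with a right adjoint ($U_c$), so no formal adjoint-commutation argument is available; this is exactly the ``mixed'' situation of Remark \ref{3.5}, where the paper only records an inequality. Your displayed pointwise identity also compares a join over $y\in HC$ (for $L_c\tau$) with a join over $y\in FC$ (for $L_c(\tau\circ\iota)$), which is a different and harder comparison than the one the theorem requires, and in any case targets the wrong extension.

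The way to close the gap — and what the paper does — is to skip the reduction entirely and verify directly that the family $(\mathrm{Lan}_{\iota_A}\sigma_A)_{A}$ of pointwise Kan extensions is itself a colax $\Sigma$-transformation; once that is known, the universal property of the pointwise extensions forces it to be the left extension, which is then objectwise by construction. The verification is a four-line computation: $Gf\,\mathrm{Lan}_{\iota_A}\sigma_A(x)=Gf\,\mathrm{colim}(\iota_A\downarrow x\to FA\xrightarrow{\sigma_A}GA)$, which equals the colimit of the composite with $Gf$ because $\iota_A\downarrow x$ is $\kappa$-directed ($\iota1$) and $Gf$ preserves $\kappa$-directed joins ($G1$); this is $\leq$ the colimit over $\iota_B\downarrow Hf(x)$ using colaxness of $\sigma$ and the inclusion of comma categories; and for $f\in\Sigma$ both inequalities become equalities because $\sigma$ is a $\Sigma$-transformation and because $(\iota2)$ makes the comparison functor $\iota_A\downarrow x\to\iota_B\downarrow Hf(x)\cong \iota_A(Ff)_*\downarrow x$ a right adjoint, hence final. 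You correctly anticipated that this finality argument is the real content, but it must be deployed on the components of the extension itself, not on an $L_c$-commutation claim.
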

\begin{proof}
Let $\sigma:F\to G$ be a $\Sigma$-natural colax transformation. It is enough to show that $(\mathrm{Lan}_{\iota_A}\sigma_A)_{A\in \ob\mathcal{C}}$ is a $\Sigma$-natural colax transformation. Let $f:A\to B$ be a morphism in $\ob\mathcal{C}$ and let $x\in HA$, \begin{align*}
    Gf \mathrm{Lan}_{\iota_A}\sigma_A(x) &= Gf\mathrm{colim}(\iota_A\downarrow x \to FA\xrightarrow{\sigma_A}GA) \\
    & =\mathrm{colim}(\iota_A\downarrow x \to FA\xrightarrow{\sigma_A}GA\xrightarrow{Gf}GB)\\
    & \leq \mathrm{colim}(\iota_A\downarrow x\to \iota_B\downarrow Hf(x) \to FB\xrightarrow{\sigma_B}GB)\\
    & \leq \mathrm{colim}(\iota_B\downarrow Hf(x) \to FB\xrightarrow{\sigma_B}GB)=\mathrm{Lan}_{\iota_B}\sigma_B(Hf(x))
\end{align*}
In the second equality we use that $\iota_A\downarrow x$ is $\kappa$-directed and that $Gf$ preserves $\kappa$-directed joins. The inequality in the third line follows from the fact that $\sigma$ is colax and the last inequality is induced by the inclusion $\iota_A\downarrow x \to \iota_B\downarrow Hf(x)$. This shows that it is a colax transformation. 

To show that the colax transformation is a $\Sigma$-natural, suppose that $f\in \Sigma$. In this case the first inequality becomes an equality because $\sigma$ is a $\Sigma$-natural transformation and therefore $Gf\sigma_A=\sigma_BFf$. From $(\iota2)$ it is straightforward to check that \[\iota_A\downarrow x \to \iota_B\downarrow Hf(x) \cong \iota_A(Ff)_*\downarrow x\] is right adjoint and therefore final. It follows that the second inequality is an equality in this case. 
\end{proof}
\section{Premeasures as transformations}\label{6}
In this section we will represent certain premeasures by certain transformations between functors. Roughly speaking, colax and lax transformation correspond to inner and outer premeasures and strict transformations correspond to measures. Using these representations, we can apply the results from Section \ref{2} and Section \ref{5} to obtain results and formulas for joins and meets of certain premeasures and for operations to turn a certain kind of premeasure in a different kind in a universal way. 

We will study two approaches. In the first approach we will look at functors whose domain is $\mathbf{Set}_c^f$, a subcategory of countable sets and functions. In the second approach we use the category $\mathbf{Part}_c^f$, a subcategory of countable sets and \emph{partial} functions. The advantage of the first way of representing premeasures is that $\mathbf{Set}_c^f$ is slightly easier to work with than $\mathbf{Part}_c^f$. The second approach is more flexible to obtain variations of the representation result. For example, if we restrict to \emph{probability} premeasures, we need to use the second point of view to represent inner and outer probability premeasures. 

In this section we will use the notation $\bigcupdot$ and $\cupdot$ to emphasize that we are working with \emph{disjoint} unions of subsets.

\subsection{Premeasures}
We will discuss a first representation theorem for premeasures as \emph{strict} transformations from a functor $F'_{\mathcal{B}}$, describing premeasurable partitions, to a functor $G$ describing real values (Theorem \ref{rep}). The rest of this section is focused on proving that $(F'_\mathcal{B},G,\emptyset)$ satisfies the strictness condition and that the colaxification formula holds. For this we use Theorem \ref{monadformula}.

A \textbf{premeasurable space } is a set $X$ set together with an algebra  $\mathcal{B}$ of subsets, i.e. a collection of subsets closed under finite unions and complements. Let $(X_1,\mathcal{B}_1)$ and $(X_2, \mathcal{B}_2)$ be premeasurable spaces. A map $f:X_1\to X_2$ is called \textbf{premeasurable} if $f^{-1}(\mathcal{B}_2)\subseteq \mathcal{B}_1$. We denote the category of premeasurable spaces and premeasurables maps by $\mathbf{PreMble}$ and interpret it as a poset-enriched category where the hom-categories are discrete.

For a countable set $A$, let $\mathcal{P}_f(A)$ be the algebra of finite and cofinite sets. A map $f:A\to B$ is called \textbf{finite} if $f^{-1}(\mathcal{P}_f(B))\subseteq \mathcal{P}_f(A)$. Let $\mathbf{Set}_c^f$ be the category of countable sets and finite maps, which we interpret as a poset-enriched category. Endowing a countable set $A$ with the algebra $\mathcal{P}_f(A)$ of finite and cofinite sets, yields a premeasurable space. This gives a functor $i:\textbf{Set}_c^f\to \textbf{PreMble}$. Now define the enriched functor $F'_\mathcal{B}$ as \[\textbf{Set}_c^f\xrightarrow{\textbf{PreMble}((X,\mathcal{B}),i-)} \textbf{Pos}.\] For a countable set $A$, $F'_\mathcal{B}A$ can be identified with the set of $A$-indexed premeasurable partitions of $X$. Note that it is import that we restrict to \emph{finite} maps, because an arbitrary countable union of premeasurable subsets might not be premeasurable.

Let $A$ be a countable set. The set $[0,\infty]^A$ together with the pointwise order, is a poset which we will denote by $G'A$. For a finite map $f:A\to B$ of countable sets, the assignment \[(p_a)_{a\in A}\mapsto \left(\sum_{f(a)=b}p_a\right)_{b\in B}\] defines an order-preserving map $G'f:G'A\to G'B$. We obtain a strict 2-functor $G':\textbf{Set}_c^f\to \textbf{Pos}$.

A \textbf{premeasure} on a premeasurable space $(X,\mathcal{B})$ is a map $\mu:\mathcal{B}\to [0,\infty]$ such that $\mu\left(\bigcupdot_{n=1}^\infty B_n\right)=\sum_{n=1}^\infty\mu(B_n)$ for pairwise disjoint $(B_n)_{n=1}^\infty$ in $\mathcal{B}$ such that $\bigcupdot_{n=1}^\infty B_n$ is also an element of $\mathcal{B}$. For premeasures $\mu_1$ and $\mu_2$, we write $\mu_1\leq \mu_2$ if $\mu_1(B)\leq \mu_2(B)$ for all $B\in \mathcal{B}$. This turns the set of premeasures on $(X,\mathcal{B})$ into a poset, which we will denote by $M(X,\mathcal{B})$.

We start by giving a first representation theorem for premeasures, as strict transformations (Theorem \ref{rep}). This representation result is related to the ideas in section 3 of \cite{staton}. 

For a premeasure $\mu$ and a countable set $A$, define $\tau^\mu_A:F'_\mathcal{B}A\to G'A$ by the assignment \[\left(B_a\right)_{a\in A}\mapsto \left(\mu(B_a)\right)_{a\in A}. \]
\begin{prop}\label{6.1}
Let $\mu$ be a premeasure, then $(\tau^\mu_A)_{A}$ is a strict transformation $F'_\mathcal{B}\to G'$.
\end{prop}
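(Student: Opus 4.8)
The plan is to verify directly that the collection of maps $\tau^\mu=(\tau^\mu_A)_{A}$ is compatible with the action of \emph{every} finite map $f:A\to B$ in $\mathbf{Set}_c^f$, i.e. that $Gf\circ\tau^\mu_A=\tau^\mu_B\circ F_{\mathcal{B}}f$ as maps $F_{\mathcal{B}}A\to GB$. First I would unwind the two functors on a morphism: $F_{\mathcal{B}}f$ sends an $A$-indexed premeasurable partition $(B_a)_{a\in A}$ of $X$ to the $B$-indexed partition whose $b$-th block is $\bigcupdot_{f(a)=b}B_a$ (this is the reindexing coming from $\mathbf{PreMble}((X,\mathcal{B}),i-)$ applied to $f$), and $Gf$ sends a tuple $(p_a)_{a\in A}$ to $\bigl(\sum_{f(a)=b}p_a\bigr)_{b\in B}$. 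Chasing a partition $(B_a)_{a}$ around the square then reduces the whole claim to the single identity
\[
\mu\!\left(\bigcupdot_{f(a)=b}B_a\right)=\sum_{f(a)=b}\mu(B_a)\qquad\text{for every }b\in B.
\]

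The key point is that this identity is exactly an instance of the $\sigma$-additivity (countable additivity on disjoint unions that stay inside $\mathcal{B}$) built into the definition of a premeasure: the blocks $(B_a)_{f(a)=b}$ are pairwise disjoint, their union is the $b$-block of the partition $F_{\mathcal{B}}f(B_a)_a$ and hence lies in $\mathcal{B}$, and the index set $f^{-1}(b)$ is at most countable since $A$ is countable. One should also note that this uses no finiteness of $f$ in an essential way \emph{once we know $F_{\mathcal{B}}f$ is well-defined}; the role of the finiteness hypothesis on $f$ is precisely to ensure that $i f$ is premeasurable so that $\mathbf{PreMble}((X,\mathcal{B}),i-)$ actually produces a functor $F_{\mathcal{B}}$ — I would remark on this but not belabour it, since it is already absorbed into the setup of $F_{\mathcal{B}}$. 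Finally, one also checks the two boundary cases $f^{-1}(b)=\emptyset$ (both sides are $\mu(\emptyset)=0$, using $\sigma$-additivity applied to the empty union, equivalently additivity of $\mu$) and $f^{-1}(b)$ finite (ordinary finite additivity, again a special case of the premeasure axiom by padding with $\emptyset$'s).

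Since $\mathbf{PreMble}$ is treated as a $2$-category with discrete hom-categories, there are no nontrivial $2$-cells to worry about, so strictness of $\tau^\mu$ amounts to exactly the commuting square above for each $1$-morphism $f$; there is nothing further to verify. I do not expect any genuine obstacle here — the statement is essentially a repackaging of countable additivity — so the only thing to be careful about is bookkeeping: making sure the description of $F_{\mathcal{B}}f$ on partitions is the correct one (reindexing by preimages) and that disjointness and the ``union lands in $\mathcal{B}$'' side-condition are both inherited from the original partition. Given those, the computation
\[
\bigl(Gf\circ\tau^\mu_A\bigr)(B_a)_a=\Bigl(\textstyle\sum_{f(a)=b}\mu(B_a)\Bigr)_b=\Bigl(\mu\bigl(\bigcupdot_{f(a)=b}B_a\bigr)\Bigr)_b=\bigl(\tau^\mu_B\circ F_{\mathcal{B}}f\bigr)(B_a)_a
\]
closes the proof.
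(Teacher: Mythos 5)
Your proof is correct and follows essentially the same route as the paper: unwind $F_{\mathcal{B}}f$ and $Gf$ on a partition and reduce the commuting square to the $\sigma$-additivity identity $\mu\bigl(\bigcupdot_{f(a)=b}B_a\bigr)=\sum_{f(a)=b}\mu(B_a)$, noting that the union is the $b$-block of the image partition and hence lies in $\mathcal{B}$. Your extra remarks on the empty and finite fibre cases and on the role of finiteness of $f$ are sensible bookkeeping that the paper leaves implicit.
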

\begin{proof}
Let $f:A\to B$ be a finite map. For $(E_a)_{a\in A}\in F'_\mathcal{B}A$ and $b\in B$ \[G'f(\tau^\mu_A((E_a)_a))_b=\sum_{f(a)=b}\mu(E_a)=\mu\left(\bigcupdot_{f(a)=b}E_a\right).\]
Here we used the $\sigma$-additivity of $\mu$.
\end{proof}
If $\mu_1\leq \mu_2$, then clearly $\tau^{\mu_1}\leq \tau^{\mu_2}$. It follows that the assignment $\mu\mapsto \tau^\mu$ defines an order-preserving map $\tau^{(-)}:M(X,\mathcal{B})\to [F'_\mathcal{B},G']_s$.

\begin{thm}\label{rep}
The map $\tau^{(-)}$ is an isomorphism of posets.
\end{thm}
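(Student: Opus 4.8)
The plan is to construct an explicit inverse to $\tau^{(-)}\colon M(X,\mathcal{B})\to [F_\mathcal{B},G]_s$ and check that the two composites are identities. Given a strict transformation $\tau\colon F_\mathcal{B}\to G$, I would define a candidate premeasure $\mu^\tau\colon \mathcal{B}\to[0,\infty]$ by evaluating $\tau$ on a two-element indexing set. Concretely, for $B\in\mathcal{B}$ consider the countable (indeed two-element) set $\mathbf{2}=\{0,1\}$; the partition $(B, X\setminus B)$ is an element of $F_\mathcal{B}\mathbf{2}$, and I set $\mu^\tau(B):=\tau_{\mathbf{2}}(B,X\setminus B)_0\in[0,\infty]$. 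The first step is to verify this is well-defined and that $\mu^\tau(\emptyset)=0$: apply naturality of $\tau$ with respect to a suitable finite map, or note directly that for $B=\emptyset$ one may compare with the map $\mathbf{2}\to\mathbf{1}$ and use strictness to force the value $0$.

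Next I would prove that $\mu^\tau$ is $\sigma$-additive. Suppose $(B_n)_{n=1}^\infty$ is a pairwise disjoint family in $\mathcal{B}$ with $\bigcupdot_n B_n=:B\in\mathcal{B}$. Using the partition indexed by the countable set $\mathbb{N}\cup\{\infty\}$ whose blocks are the $B_n$ together with $X\setminus B$ — this is a premeasurable partition precisely because $B\in\mathcal{B}$ and each $B_n\in\mathcal{B}$ — one gets an element of $F_\mathcal{B}(\mathbb{N}\cup\{\infty\})$. Applying $\tau$ and then pushing forward along the finite map $\mathbb{N}\cup\{\infty\}\to\mathbf{2}$ that sends $\mathbb{N}$ to $0$ and $\infty$ to $1$ (this is finite since the preimage of a finite set is finite or cofinite), strictness of $\tau$ gives $\sum_{n}\mu^\tau(B_n)=\mu^\tau(B)$, because $Gf$ on the relevant component is exactly summation over the fiber. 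This is the step I expect to carry the real content, since one must be careful that the map $\mathbb{N}\cup\{\infty\}\to\mathbf{2}$ is genuinely a morphism in $\mathbf{Set}_c^f$ and that the partition $(B_n)_n\cup(X\setminus B)$ really lies in $F_\mathcal{B}$; once that is set up, $\sigma$-additivity is just a single application of the strictness equation $Gf\circ\tau_A=\tau_B\circ F_\mathcal{B}f$.

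It then remains to check the two round trips. For $\mu^{\tau^\mu}=\mu$ this is immediate from the definitions: $\tau^\mu_{\mathbf{2}}(B,X\setminus B)=(\mu(B),\mu(X\setminus B))$, so its $0$-component is $\mu(B)$. For $\tau^{\mu^\tau}=\tau$ I would argue that an arbitrary element $(B_a)_{a\in A}$ of $F_\mathcal{B}A$ and any $a_0\in A$ can be compared, via the finite map $A\to\mathbf{2}$ collapsing everything but $a_0$ to $1$, to the two-block partition $(B_{a_0},X\setminus B_{a_0})$; strictness of $\tau$ then yields $\tau_A((B_a)_a)_{a_0}=\tau_{\mathbf{2}}(B_{a_0},X\setminus B_{a_0})_0=\mu^\tau(B_{a_0})=\tau^{\mu^\tau}_A((B_a)_a)_{a_0}$, and since $a_0$ was arbitrary the two transformations agree. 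Finally, order-preservation in both directions is clear from the pointwise definitions, so $\tau^{(-)}$ is an isomorphism of posets. The main obstacle, as noted, is purely bookkeeping: confirming that all the indexing sets are countable, all the collapsing maps are finite maps, and all the partitions built from $\mathcal{B}$-sets are premeasurable partitions, so that the single tool available — the strictness identity — can be invoked.
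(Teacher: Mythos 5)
Your proposal follows essentially the same route as the paper: define the inverse by evaluating $\tau$ on the two-block partition $(B,X\setminus B)$, then use collapsing finite maps into $\mathbf{2}$ together with the strictness identity to get $\sigma$-additivity and both round trips. Two small corrections to details you left open: the map $\mathbf{2}\to\mathbf{1}$ only yields $\tau_\mathbf{2}(\emptyset,X)_0+\tau_\mathbf{2}(\emptyset,X)_1=\tau_\mathbf{1}(X)$, which does not force $\mu^\tau(\emptyset)=0$ because one cannot cancel in $[0,\infty]$ --- the paper instead uses $t\colon\mathbf{1}\to\mathbf{2}$ with $t(0)=0$, for which the relevant component of $Gt$ is an empty sum, giving $\tau_\mathbf{2}(X,\emptyset)_1=0$ outright. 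Similarly, your single collapse $\mathbb{N}\cup\{\infty\}\to\mathbf{2}$ gives only $\sum_n\tau(\cdots)_n=\mu^\tau(B)$; you still need the per-index collapses sending $n$ to one point and everything else to the other (exactly the maps you use later for $\tau^{\mu^\tau}=\tau$) to identify each $\tau(\cdots)_n$ with $\mu^\tau(B_n)$, so $\sigma$-additivity takes countably many applications of strictness rather than one.
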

\begin{proof}
We will construct an inverse map for $\tau^{(-)}$. Let $\tau:F'_\mathcal{B}\to G'$ be a strict transformation. For $n\geq 1$, write $\mathbf{n}:=\{0,1,\ldots, n-1\}$. For $B\in \mathcal{B}$, define a map $\mu_\tau:\mathcal{B}\to [0,\infty]$ by \[\mu_\tau(B):=\tau_\textbf{2}((B^c,B))_1.\]
We will now show that this map is a premeasure. Let $t:\textbf{1}\to \textbf{2}$ be the map that sends $0$ to $0$. We find \[\mu_\tau(\emptyset)=\tau_\textbf{2}((X,\emptyset))_1= \tau_\textbf{2}(F'_\mathcal{B}t(X))_1=G't(\tau_\textbf{1}(X))_1=0.\]

Now consider $(B_n)_{n=1}^\infty$ pairwise disjoint elements in $\mathcal{B}$ such that $B:=\bigcupdot_{n=1}^\infty B_n$ is also an element in $\mathcal{B}$. Writing $B_0:=B^c$, yields an element $(B_n)_{n\in \mathbb
N}\in F'_\mathcal{B}\mathbb{N}$. For a natural number $n\geq 1$, let $s_n:\mathbb{N}\to \textbf{2}$ be the finite map that sends $n$ to $1$ and every other element to $0$. Furthermore, define a map $s_0:\mathbb{N}\to \textbf{2}$ that sends $0$ to $0$ and every other element to $1$. This map is also finite. 

For $n\in\mathbb{N}$, we have a commutative diagram \[\begin{tikzcd}
	{F'_\mathcal{B}\mathbb{N}} && {G'\mathbb{N}} \\
	\\
	{F'_\mathcal{B}\textbf{2}} && {G'\textbf{2}}
	\arrow["{F's_n}"{description}, from=1-1, to=3-1]
	\arrow["{'Gs_n}"{description}, from=1-3, to=3-3]
	\arrow["{\tau_\mathbb{N}}"{description}, from=1-1, to=1-3]
	\arrow["{\tau_\textbf{2}}"{description}, from=3-1, to=3-3]
\end{tikzcd}\]
It follows that \begin{align*}
    \mu_\tau(B) & = \tau_\textbf{2}(F'_\mathcal{B}s_0((B_m)_{m\in \mathbb{N}}))_1\\
    & = G's_0(\tau_\mathbb{N}((B_m)_{m\in \mathbb{N}}))_1\\
    & = \sum_{n=1}^\infty \tau_\mathbb{N}((B_m)_{m\in \mathbb{N}})_n. 
\end{align*}
For $n\geq 1$, we find \begin{align*}
    \mu_\tau(B_n) & = \tau_\textbf{2}(F'_\mathcal{B}s_n((B_m)_{m\in \mathbb{N}}))_1\\
    & = G's_n(\tau_\mathbb{N}((B_m)_{m\in\mathbb{N}})))_1\\
    & = \tau_\mathbb{N}((B_m)_{m\in \mathbb{N}}))_n.
\end{align*}
Combining the above gives $\sum_{n=1}^\infty \mu_\tau(B_n)=\mu_\tau(B)$, which shows that $\mu$ is a premeasure. Sending $\tau$ to $\mu_\tau$ defines an order-preserving map $\mu_{(-)}:[F'_\mathcal{B},G']_s\to M(X,\mathcal{B})$. We will now show that this is the inverse of $\tau^{(-)}$.

Let $A$ be a countable set and let $a_0\in A$. Consider the finite map $s_{a_0}:A\to \textbf{2}$ that sends $a_0$ to $1$ and every other element to $0$. For a strict transformation $\tau:F'_\mathcal{B}\to G'$ and for $(B_a)_{a\in A}\in F'_\mathcal{B}A$, \[\tau^{\mu_\tau}_A((B_a)_a))_{a_0}=\mu_\tau(B_{a_0})=\tau_\textbf{2}(F'_\mathcal{B}s_{a_0}((B_a)_a)_1=G's_{a_0}(\tau_A((B_a)_a))_1=\tau_A((B_a)_a)_{a_0}\]
This shows that $\tau^{\mu_\tau}=\tau$. For a premeasure $\mu$ and $B\in \mathcal{B}$, \[\mu_{\tau^\mu}(B)=\tau^\mu_\textbf{2}(B^C,B)_1=\mu(B).\] We conclude that $\mu_{\tau^\mu}=\mu$.
\end{proof}

We want to apply Theorem \ref{monadformula} to the transformations that represent premeasures. The following two results (Lemma \ref{lemma} and Proposition \ref{proppreserves}) will help us to verify the conditions of Theorem \ref{monadformula}. This will lead to the proof of Proposition \ref{6.5} that describes how to turn a lax transformation $F_\mathcal{B}'\to G'$ into a premeasure.  

\begin{lem} \label{lemma}
The category $\mathbf{Set}_c^f$ has pullbacks and $i:\textbf{Set}_c^f\to \textbf{PreMble}$ preserves those.
\end{lem}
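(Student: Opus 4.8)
The plan is to verify both claims by hand, since both are concrete and low-tech once the right description of pullbacks is in place. First I would recall that in $\mathbf{Set}$ the pullback of $f:A\to C$ and $g:B\to C$ is the set $P=\{(a,b)\in A\times B\mid f(a)=g(b)\}$ with the two coordinate projections $\pi_1:P\to A$ and $\pi_2:P\to B$. For this to live in $\mathbf{Set}_c^f$ I need two things: that $P$ is countable, which is immediate as a subset of the countable set $A\times B$; and that $\pi_1$ and $\pi_2$ are finite maps, i.e. that preimages of finite-or-cofinite sets are finite-or-cofinite. The key observation is that for $S\subseteq A$ one has $\pi_1^{-1}(S)=\{(a,b)\in P\mid a\in S\}$, which can be rewritten using that $f$ and $g$ are finite maps; the point is that $\pi_1$ factors as $P\hookrightarrow A\times B\xrightarrow{\mathrm{pr}_A} A$ and that on the fibre over $C$ the finiteness of $f,g$ controls the fibres of $\pi_1$.

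More carefully, the step I would actually carry out is: given a finite map $f:A\to C$ and $g:B\to C$, show $\pi_1:P\to A$ is finite. For $S\in\mathcal{F}_A$ with $S$ finite, $\pi_1^{-1}(S)=\bigcup_{a\in S}\{a\}\times g^{-1}(f(a))$; each $g^{-1}(f(\{a\}))$ — wait, $f(a)$ is a single point, and $g^{-1}$ of a single point is $g^{-1}$ of a finite set, hence finite or cofinite. Here I need the extra input that a single point of $C$ is finite, hence $g^{-1}$ of it is in $\mathcal{F}_B$; if it is cofinite then $\pi_1^{-1}(\{a\})$ could be cofinite in $P$, which would be a problem. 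So the genuinely careful part is to check the \emph{cofinite} case: if $S\in\mathcal F_A$ is cofinite, then $A\setminus S$ is finite, and one argues $P\setminus\pi_1^{-1}(S)=\pi_1^{-1}(A\setminus S)$ and reduces to the finite case — but one still must rule out $\pi_1$ having infinite fibres over a single point contributing badly. The clean way is: one checks directly that the composite $\mathbf{Set}_c^f\hookrightarrow\mathbf{Set}$ reflects the universal property, so it suffices to verify $P$ with its projections is in $\mathbf{Set}_c^f$ and that the mediating map into $P$ from any competing cone in $\mathbf{Set}_c^f$ is again finite — the latter because a restriction/corestriction of a finite map is finite, which is a short direct check.

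For the second claim, that $i:\mathbf{Set}_c^f\to\mathbf{PreMble}$ preserves pullbacks, I would show that the premeasurable space $i(P)=(P,\mathcal F_P)$, together with the images under $i$ of $\pi_1,\pi_2$, satisfies the universal property of the pullback of $i(f)$ and $i(g)$ in $\mathbf{PreMble}$. Since hom-categories in $\mathbf{PreMble}$ are discrete, this is a $1$-categorical statement: given a premeasurable space $(Y,\mathcal D)$ and premeasurable maps $u:(Y,\mathcal D)\to i(A)$, $v:(Y,\mathcal D)\to i(B)$ with $i(f)\circ u=i(g)\circ v$, the unique set map $w:Y\to P$ with $\pi_1 w=u,\pi_2 w=v$ is forced; I must check $w$ is premeasurable, i.e. $w^{-1}(\mathcal F_P)\subseteq\mathcal D$. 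This follows because $\mathcal F_P$ is generated (under finite unions and complements) by sets of the form $\pi_1^{-1}(\text{finite})$ and $\pi_2^{-1}(\text{finite})$ together with the cofinite sets, and $w^{-1}\pi_1^{-1}=u^{-1}$, $w^{-1}\pi_2^{-1}=v^{-1}$ land in $\mathcal D$ by hypothesis — plus $\mathcal D$ is closed under the Boolean operations. The main obstacle, and the only place real care is needed, is the bookkeeping around finite-versus-cofinite sets when verifying finiteness of the projections $\pi_1,\pi_2$ in the first claim; everything after that is formal.
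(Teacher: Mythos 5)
Your route is the same as the paper's: take the set-theoretic pullback $P=\{(a,b)\mid s(a)=t(b)\}$ with its projections, argue that this data lives in $\mathbf{Set}_c^f$, and then get preservation under $i$ from the fact that $p_1^{-1}(\{a\})\cap p_2^{-1}(\{b\})=\{(a,b)\}$, so the algebra generated by the projections contains all singletons of $P$ (your remark that $\mathcal{F}_P$ is generated by $\pi_1^{-1}(\text{finite})$ and $\pi_2^{-1}(\text{finite})$ is exactly this, and the extra ``together with the cofinite sets'' is redundant since cofinite sets are complements of finite unions of singletons). The second half of your argument is fine, as are countability of $P$ and the finiteness of the mediating maps.

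The gap is in the first half, and you identify it yourself and then do not close it. The entire content of the lemma is that $p_1$ and $p_2$ are finite maps, i.e.\ that each fibre $p_1^{-1}(\{a\})$ is finite or cofinite \emph{in $P$}. That fibre is in bijection with $t^{-1}(s(a))$, which is finite or cofinite \emph{in $B$}; in the finite case all is well, but when $t^{-1}(s(a))$ is cofinite in $B$ the set $p_1^{-1}(\{a\})=\{a\}\times t^{-1}(s(a))$ need not be cofinite in $P$. Concretely, if $s=t:\mathbb{N}\to\{\ast\}$ are the unique maps (these are finite maps), the set-theoretic pullback is $\mathbb{N}\times\mathbb{N}$ and every fibre of $p_1$ is infinite with infinite complement, so $p_1$ is not a finite map. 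This is precisely the worry you voice (``$\pi_1$ having infinite fibres over a single point contributing badly''), but your proposed escape --- that it ``suffices to verify $P$ with its projections is in $\mathbf{Set}_c^f$'' --- is circular: that is the statement at issue, and reflecting the universal property along $\mathbf{Set}_c^f\hookrightarrow\mathbf{Set}$ does not help with it. (For comparison, the paper's proof dispatches this step with the single line ``$p_1^{-1}(\{a\})=t^{-1}(s(a))$, hence $p_1$ is finite'', which elides the same distinction between cofiniteness in $B$ and cofiniteness in $P$; your instinct that this is where the real care is needed is correct, but the argument as you sketch it does not supply that care, and for the set-theoretic pullback the claim genuinely fails once $s$ and $t$ both have a cofinite fibre over a common point.)
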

\begin{proof}
Let $s:A\to C$ and $t: B\to C$ be finite maps of countable sets. Define $P:=\{(a,b)\mid s(a)=t(b)\}$ and let $p_1:P\to A$ and $p_2:P\to B$ be the projection maps. Because $p_1^{-1}(\{a\})= t^{-1}(s(a))$ for all $a\in A$, $p_1$ is a finite map. Similarly, $p_2$ is a finite map. It is now easy to see that $(P,p_1,p_2)$ forms the pullback of $s$ and $t$ in $\mathbf{Set}_c^f$.

To show that $i$ preserves this pullback, we need to show that the algebra generated by $p_1$ and $p_2$ contains all singletons. But this follows immediately from the fact that $p_1^{-1}(\{a\})\cap p_2^{-1}(\{b\})=\{(a,b)\}$ for all $(a,b)\in P$.
\end{proof}

\begin{prop}\label{proppreserves}
For $f:A\to B$ in $\mathbf{Set}_c^f$, $G'f$ preserves directed joins.    
\end{prop}
\begin{proof}
Let $(p^i)_{i\in I}$ be a directed collection of elements in $GA$. For $a\in A$, define $p_a:=\sup_{i\in I}p_a^i$. For $b\in B$,  we always have that \[G'f((p_a)_a)_b=\sum_{f(a)=b}p_a\geq \sup_{i\in I}\sum_{f(a)=b}p_a^i=\sup_{i\in I}G'f(p^i).\]
To prove the other inequality, consider an element $b\in B$. Suppose first that there exists $a\in f^{-1}(\{b\})$ such that $p_a=\infty$. In that case, for every $N\geq 1$, there exists $i_N\in I$ such that $N\leq p_a^{i_N}$. In that case \[N\leq p_a^{i_N}\leq \sum_{f(a)=b}p_a^{i_N}\leq \sup_{i\in I}\sum_{f(a)=b}p_a^i.\] By taking $N\to \infty$, we conclude that $\sup_{i\in I}\sum_{f(a)=b}p_a^i=\infty$, which shows the equality.

Suppose now that $p_a<\infty$ for all $a\in f^{-1}(\{b\})$ and consider a finite set $A'\subseteq f^{-1}(\{b\})$ and $\epsilon>0$. For every $a\in A'$ there exists $i_a\in I$ such that \[p_a-\frac{\epsilon}{\lvert A'\rvert}\leq p_a^{i_a}.\]
Because $(p^i)_{i\in I}$ is directed and $A'$ is finite, there exists an $i_0\in I$ such that $p^{i_a}\leq p^{i_0}$ for all $a\in A'$. Summing over $A'$ leads to \[\sum_{a\in A'}p_a-\epsilon\leq \sum_{a\in A'}p_a^{i_a}\leq \sum_{a\in A'}p_a^{i_0}\leq \sum_{f(a)=b}p_a^{i_0} \leq \sup_{i\in I}\sum_{f(a)=b}p_a^i.\]
The claim now follows by first taking $\epsilon\to 0$ and then taking the supremum over all finite subsets of $f^{-1}(\{b\})$. 
\end{proof}

\begin{prop} \label{6.5}
The triple $(F'_\mathcal{B},G',\emptyset)$ satisfies the strictness property. Moreover, for $\lambda\in [F'_\mathcal
B,G']_l$, \[\mu^{\overline{\lambda}}(B)=\sup\left\{\sum_{a\in A}\lambda_A(B_a)\mid \bigcupdot_{a\in A}B_a=B\right\},\]
for all $B\in \mathcal{B}$.
\end{prop}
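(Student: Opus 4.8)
The statement has two parts: first, that $(F_{\mathcal{B}}, G, \emptyset)$ satisfies the strictness condition, and second, the explicit formula for the premeasure $\mu^{\overline{\lambda}}$ associated to $\overline{\lambda} = L_c U_l(\lambda)$ for a lax transformation $\lambda$. The natural strategy is to verify the hypotheses of Theorem~\ref{monadformula} for the pair $(F_{\mathcal{B}}, G)$ with a suitably chosen regular cardinal $\kappa$; once $\overline{\lambda}$ is known to be strict and given by the join formula of that theorem, the formula for $\mu^{\overline{\lambda}}$ drops out by evaluating $\overline{\lambda}$ at the object $\mathbf{2}$ and translating through the isomorphism $\tau^{(-)} \colon M(X,\mathcal{B}) \cong [F_{\mathcal{B}}, G]_s$.

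First I would fix $\kappa = \aleph_1$ (so that "$\kappa$-directed" means "directed" and "$\kappa$-wide pullbacks" means countable wide pullbacks) and check the three hypotheses of Theorem~\ref{monadformula} in turn. Hypothesis $(G_1)$, that $Gf$ preserves $\kappa$-directed joins, is exactly Proposition~\ref{proppreserves} (directed joins are $\aleph_1$-directed joins). For $(F_2)$, I need that $\mathbf{Set}_c^f$ has countable wide pullbacks and that $F_{\mathcal{B}}$ preserves them; this extends Lemma~\ref{lemma}, which handles binary pullbacks — the countable wide pullback of maps $s_i \colon A_i \to C$ is $\{(a_i)_i : s_i(a_i) = s_j(a_j) \text{ for all } i,j\}$, a countable set, and the argument that $i \colon \mathbf{Set}_c^f \to \mathbf{PreMble}$ and hence $F_{\mathcal{B}}$ preserves it is the same singleton-generation argument as in Lemma~\ref{lemma}. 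For $(F_1)$, the weak finality of $(l'_A \downarrow x)_= \hookrightarrow l'_A \downarrow x$: here $A$ is a countable set, $x = (B_a)_{a \in A} \in F_{\mathcal{B}}A$ is a premeasurable partition, and an object of $l'_A \downarrow x$ is a finite map $g \colon C \to A$ together with a partition $(D_c)_{c \in C}$ of $X$ with $F_{\mathcal{B}}g((D_c)_c) \le (B_a)_a$, i.e. $\bigcupdot_{g(c)=a} D_c \subseteq B_a$; weak finality means every such object maps to one where equality holds. Given such an object, one enlarges $C$ to $C' := C \sqcup \{*\}$ (or adjusts the fibres) and the partition by adding the leftover set $X \setminus \bigcupdot_c D_c$ as the new block, checking that the enlarged map $C' \to A$ is still finite and that there is a morphism in $l'_A \downarrow x$ from the original object to the new one.

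With all three hypotheses verified, Theorem~\ref{monadformula} gives that $\overline{\lambda}$ is strict and
\[
\overline{\lambda}_A(x) = \bigvee\left\{ Gg(\lambda_C(y)) \mid g \colon C \to A;\ y \in F_{\mathcal{B}}C \text{ with } F_{\mathcal{B}}g(y) \le x \right\},
\]
and since strictness of $\overline{\lambda}$ for all $\lambda \in [F_{\mathcal{B}},G]_l$ is condition (1) of Proposition~\ref{strictness}, the triple $(F_{\mathcal{B}}, G, \emptyset)$ satisfies the strictness condition. For the formula for $\mu^{\overline{\lambda}}$: by definition $\mu^{\overline{\lambda}}(B) = \overline{\lambda}_{\mathbf{2}}(B^c, B)_1$. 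I would evaluate the displayed join at $A = \mathbf{2}$ and $x = (B^c, B)$, picking out the component indexed by $1$. An object $(g \colon C \to \mathbf{2},\ (D_c)_c)$ with $F_{\mathcal{B}}g((D_c)_c) \le (B^c, B)$ amounts to a premeasurable partition $(D_c)_c$ with $\bigcupdot_{g(c)=1} D_c \subseteq B$; the weak-finality step (hypothesis $(F_1)$) lets me assume equality, so writing $\{c : g(c) = 1\}$ as the index set of a partition $(B_a)_{a \in A}$ of $B$ inside $\mathcal{B}$, the relevant component of $Gg(\lambda_C(y))$ is $\sum_{a} \lambda_{A}(B_a)$ (here reindexing by the fibre over $1$, a countable set, and using that $\lambda$ is evaluated componentwise). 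Taking the supremum yields
\[
\mu^{\overline{\lambda}}(B) = \sup\left\{ \sum_{a \in A} \lambda_A(C_a) \;\middle|\; \bigcupdot_{a \in A} C_a = B \right\},
\]
the supremum being over countable premeasurable partitions of $B$ (matching the notation $(C_a)$, $(B_a)$ in the statement).

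\textbf{Main obstacle.} The delicate point is hypothesis $(F_1)$ — verifying that $(l'_A \downarrow x)_= \hookrightarrow l'_A \downarrow x$ is weakly final. This requires understanding the morphisms of $l'_A \downarrow x$ precisely (a map $s \colon C_2 \to C_1$ of countable sets, finite, with a 2-cell $g_1 s \Rightarrow g_2$ — which in the discrete 2-category $\mathbf{PreMble}$-image is just $g_1 s = g_2$ — and $y_1 \le F_{\mathcal{B}}s(y_2)$) and then exhibiting, for each object with a strict inequality $\bigcupdot_{g(c)=a} D_c \subsetneq B_a$ for some $a$, a morphism to an object with equality throughout. The construction is the measure-theoretic heart of the argument, and care is needed to ensure the enlarged indexing set stays countable and the enlarged map stays finite (finiteness of $g \colon C \to A$ means finite fibres over finite sets and cofinite fibres... actually $g^{-1}(\mathcal{F}_A) \subseteq \mathcal{F}_C$); one must also handle the case where $B_a$ or its complement is infinite. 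A secondary, more routine obstacle is bookkeeping the reindexing when passing from the abstract join over all $(g \colon C \to \mathbf{2}, y)$ to the concrete supremum over partitions of $B$, making sure the "$+\infty$" cases and the trivial partition are correctly accounted for.
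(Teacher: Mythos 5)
Your overall strategy --- verify the hypotheses of Theorem \ref{monadformula} and then evaluate the resulting join formula at $\mathbf{2}$ --- is exactly the paper's route, but two of your three hypothesis-checks go wrong in ways worth naming.

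First, the choice $\kappa=\aleph_1$ is a genuine error. ``Directed'' means $\aleph_0$-directed (finite subsets have upper bounds), so Proposition \ref{proppreserves} gives $(G_1)$ for $\kappa=\aleph_0$, and the paper correspondingly only needs $\aleph_0$-wide pullbacks, i.e.\ ordinary (iterated binary) pullbacks, which is precisely what Lemma \ref{lemma} supplies. With your $\kappa=\aleph_1$ you must produce \emph{countable} wide pullbacks in $\mathbf{Set}_c^f$, and your proposed construction does not stay in the category: for finite maps $s_i:A_i\to C$ the fibre of $\{(a_i)_i: s_i(a_i)=s_j(a_j)\ \forall i,j\}$ over $c\in C$ is $\prod_i s_i^{-1}(c)$, which is uncountable as soon as infinitely many of the $s_i^{-1}(c)$ have at least two elements. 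So your verification of $(F_2)$ fails as stated; the fix is simply to take $\kappa=\aleph_0$.

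Second, you identify $(F_1)$ as ``the measure-theoretic heart of the argument'' and propose an enlarge-the-partition construction, but in this section that work is unnecessary: $\mathbf{PreMble}$ has discrete hom-categories, so $F_{\mathcal{B}}A=\mathbf{PreMble}((X,\mathcal{B}),iA)$ is a \emph{discrete} poset, the condition $F_{\mathcal{B}}g(y)\leq x$ forces $F_{\mathcal{B}}g(y)=x$, and hence $(l'_A\downarrow x)_= = l'_A\downarrow x$ on the nose --- which is all the paper says. The leftover-block construction you describe (and the attendant worries about inclusions $\bigcupdot_{g(c)=a}D_c\subseteq B_a$ being strict) belongs to the partial-map representation of Section \ref{section6.2}, where $F_{\mathcal{B}}A$ carries a nontrivial order; that is exactly the argument of Proposition \ref{6.10}, not of this proposition. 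Your final translation of the join formula into the supremum over partitions of $B$ is in the right spirit, once one notes that here the partitions must cover $B$ exactly (consistent with the equality $\bigcupdot_a B_a=B$ in the statement, as opposed to the inclusion appearing in Proposition \ref{6.10}).
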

\begin{proof}
By Proposition \ref{proppreserves}, $G'f$ preserves directed joins for all $f\in \textbf{Set}_c^f$. For a countable set $A$ and $x\in F'_{\mathcal{B}}(A)$,  $(l'_A\downarrow x)_==l'_A\downarrow x$, since $F'_{\mathcal{B}}(A)$ is a discrete poset. Finally,  Lemma \ref{lemma} and the fact that hom-functors preserve pullbacks imply that $F'_\mathcal{B}$ preserves pullbacks. The result now follows by Theorem \ref{monadformula} for $\kappa=\aleph_0$.
\end{proof}
\subsection{Outer/inner premeasures}\label{section6.2}
In this subsection we will give a second representation theorem for premeasures as \emph{strict} transformations and extend this to a representation result for inner and outer measures as \emph{lax} and \emph{colax} transformations (Theorem \ref{6.7}). We will then prove that also in this case the strictness condition is satisfied and the the colaxifiction formula holds. We will again rely on Theorem \ref{monadformula} to prove this. 

Let $(X_1,\mathcal{B}_1)$ and $(X_2,\mathcal{B}_2)$ be premeasurable spaces. A partial map $f:X_1\to X_2$ is \textbf{premeasurable} if $f^{-1}(B)\in \mathcal{B}_1$ for all $B\in \mathcal{B}_2$. For premeasurable partial maps $f_1,f_2:A\to B$, we write $f_1\leq f_2$ if $\mathrm{dom}f_1\subseteq \mathrm{dom}f_2$ and $f_1(x)=f_2(x)$ for all $x\in \mathrm{dom}f_1$. The poset-enriched category of premeasurable spaces and premeasurable partial maps, which are ordered, is denoted as $\mathbf{PartPreMble}$. 

A partial map $f:A\to B$ of countable sets is called \textbf{finite} if $f^{-1}(B')\in \mathcal{P}_f(A)$ for all $B'\in \mathcal{P}_f(B)$. We write $\mathbf{Part}_c^f$ for the poset-enriched category of countable sets and partial finite maps, which are ordered. The collection of injective partial finite maps is denoted by $\Sigma$. Sending a countable set $A$ to $(A,\mathcal{P}_f(A))$ again yields an enriched functor $j:\textbf{Part}_c^f\to \textbf{PartPreMble}$. For a premeasurable space $(X,\mathcal{B})$, define the enriched functor $F_\mathcal{B}$ as \[\textbf{Part}_c^f\xrightarrow{\textbf{PartPreMble}(X,\mathcal{B}),j-)}\textbf{Pos}.\] For a countable set $A$, the poset $F_\mathcal{B}A$ is the set of $A$-indexed collections of pairwise disjoint premeasurble subsets, whose union is again premeasurable, endowed with the pointwise order.

Let $A$ be a countable set. Let $GA$ be the poset $[0,\infty]^A$, where the order is defined pointwise. For a finite partial map $f:A\to B$, sending $(p_a)_{a\in A}$ to \[\left(\sum_{f(a)=b}p_a\right)_{b\in B}\] gives an order-preserving map $Gf:GA\to GB$. This yields an enriched functor $G:\textbf{Part}_c^f\to \textbf{Pos}$. 

An order-preserving map $\mu:\mathcal{B}\to [0,\infty]$ such that $\mu(\emptyset)=0$ is called an \textbf{outer premeasure} if $\mu(B)\leq \sum_{n=1}^\infty\mu(B_n)$ for pairwise disjoint subsets $(B_n)_{n=1}^\infty$ in $\mathcal{B}$ such that $B:=\bigcupdot_{n=1}^\infty B_n$ is also an element in $\mathcal{B}$. It is called an \textbf{inner premeasure} if $\mu(B)\geq \sum_{n=1}^\infty\mu(B_n)$ for pairwise disjoint subsets $(B_n)_{n=1}^\infty$ in $\mathcal{B}$ such that $B:=\bigcupdot_{n=1}^\infty B_n$ is also an element in $\mathcal{B}.$ The set of outer (inner) premeasures together with objectwise order is denoted by $M_\leq(X,\mathcal{B})$ ($M_\geq(X,\mathcal{B})$).

We will now give a second representation result that describes inner (outer) premeasures as lax (colax) transformations (Theorem \ref{6.7}).

For an order-preserving map $\mu:\mathcal{B}\to [0,\infty]$ such that $\mu(\emptyset)=0$, we define maps $(\tau^\mu_A:F_{\mathcal{B}}A\to GA)_A$ by the assignment \[(E_a)_{a\in A}\mapsto \mu(E_a)_{a\in A}.\]

\begin{prop}
If $\mu$ is an outer (inner) premeasure, then $\tau^\mu$ is a $\Sigma$-natural lax (colax) transformation.
\end{prop}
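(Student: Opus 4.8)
The plan is to check the two defining inequalities directly from the definition of a (co)lax $\Sigma$-transformation, exploiting the explicit description of $G f$ as a fibrewise sum and the explicit description of $F_{\mathcal B} f$ as a ``rearrangement'' of a disjoint family along a partial finite map. Fix a finite partial map $f\colon A\to B$ and an element $(E_a)_{a\in A}\in F_{\mathcal B}A$; by definition this is an $A$-indexed family of pairwise disjoint $\mathcal B$-sets whose union $E:=\bigcupdot_{a\in A}E_a$ lies in $\mathcal B$. Unwinding the definition of $j$ and the hom-functor, $F_{\mathcal B}f$ sends $(E_a)_a$ to the family $\bigl(\,\bigcupdot_{f(a)=b}E_a\,\bigr)_{b\in B}$ (the disjoint union is over those $a\in\mathrm{dom}\,f$ with $f(a)=b$), and one must first note that this family is again an element of $F_{\mathcal B}B$: the sets are pairwise disjoint since the $E_a$ are, and their union $\bigcupdot_{a\in\mathrm{dom}\,f}E_a$ is a $\mathcal B$-subset of $E$ — this uses that $f$ is a \emph{finite} partial map so that $\mathrm{dom}\,f=f^{-1}(B)$ is finite or cofinite, hence the relevant sub-union is premeasurable. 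With this in hand the $b$-component of $Gf\circ\tau^\mu_A$ applied to $(E_a)_a$ is $\sum_{f(a)=b}\mu(E_a)$, while the $b$-component of $\tau^\mu_B\circ F_{\mathcal B}f$ is $\mu\bigl(\bigcupdot_{f(a)=b}E_a\bigr)$.

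So everything reduces, componentwise, to comparing $\sum_{f(a)=b}\mu(E_a)$ with $\mu\bigl(\bigcupdot_{f(a)=b}E_a\bigr)$ for each fixed $b\in B$. If $\mu$ is an outer premeasure then $\mu\bigl(\bigcupdot_{f(a)=b}E_a\bigr)\le\sum_{f(a)=b}\mu(E_a)$, i.e. $Gf\circ\tau^\mu_A\ge\tau^\mu_B\circ F_{\mathcal B}f$, which is the lax inequality; if $\mu$ is an inner premeasure the inequality reverses and we get the colax inequality. One subtlety: the outer/inner premeasure axioms as stated are phrased for countably infinite disjoint families, whereas here the fibre $f^{-1}(\{b\})$ may be finite; the finite case follows from the countable one by padding with $\emptyset$'s and using $\mu(\emptyset)=0$ (and monotonicity to handle the empty-fibre case, where the left side is $0=\mu(\emptyset)$).

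It remains to verify that the inequality becomes an \emph{equality} when $f\in\Sigma$, i.e. when $f$ is an injective partial finite map. In that case each fibre $f^{-1}(\{b\})$ has at most one element, so the sum $\sum_{f(a)=b}\mu(E_a)$ is either a single term $\mu(E_{a})$ or the empty sum $0$, and likewise $\bigcupdot_{f(a)=b}E_a$ is either $E_{a}$ or $\emptyset$; either way $\sum_{f(a)=b}\mu(E_a)=\mu\bigl(\bigcupdot_{f(a)=b}E_a\bigr)$ trivially (using $\mu(\emptyset)=0$ for the empty case). Hence $Gf\circ\tau^\mu_A=\tau^\mu_B\circ F_{\mathcal B}f$ for all $f\in\Sigma$, so $\tau^\mu$ is a genuine $\Sigma$-transformation and is lax (resp. colax) exactly as required.

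The only real obstacle is the bookkeeping around partiality and finiteness: one has to be careful that $F_{\mathcal B}f$ really does land in $F_{\mathcal B}B$ (this is where the ``finite partial map'' hypothesis is used and where the algebra $\mathcal F_A$ of finite/cofinite sets enters), and one has to bridge the gap between the countably-infinite phrasing of the inner/outer axioms and the possibly-finite fibres appearing here. Both are routine once spelled out, so I expect no genuine difficulty, just care.
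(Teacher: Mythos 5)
Your proposal is correct and follows essentially the same route as the paper: compare the two composites componentwise at each $b\in B$, reduce to the inequality between $\sum_{f(a)=b}\mu(E_a)$ and $\mu\bigl(\bigcupdot_{f(a)=b}E_a\bigr)$ given by the outer/inner premeasure axiom, and observe that injectivity of $f$ forces each fibre to be a singleton or empty so the inequality becomes an equality. The extra bookkeeping you supply (that $F_{\mathcal B}f$ lands in $F_{\mathcal B}B$, and the padding-by-$\emptyset$ argument for finite fibres) is left implicit in the paper but is consistent with its argument.
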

\begin{proof}
For a partial finite map $f:A\to B$ and $(E_a)_{a\in A}$ and $b\in B$, we have \[Gf(\tau^\mu_A((E_a)_a))_b = \sum_{f(a)=b}\mu(E_a) \geq \mu\left(\bigcupdot_{f(a)=b}E_a\right) = \tau^\mu(F_\mathcal{B}f((E_a)_a)_b.\]
If $f$ is injective, then $f^{-1}(\{b\})$ is either a singleton or the empty set; in both cases the inequality becomes an equality.
\end{proof}

Clearly, if $\mu$ is a premeasure, $\tau^\mu$ is a strict transformation. This leads to following order-preserving maps. \[\begin{tikzcd}
	{[F_\mathcal{B},G]^\Sigma_l} & {[F_\mathcal{B},G]_s} & {[F_\mathcal{B},G]^\Sigma_c} \\
	\\
	{M_{\leq}(X,\mathcal{B})} & {M(X,\mathcal{B})} & {M_{\geq}(X,\mathcal{B})}
	\arrow[from=1-2, to=1-1]
	\arrow[from=1-2, to=1-3]
	\arrow[from=3-2, to=3-1]
	\arrow[from=3-2, to=3-3]
	\arrow["{\lambda^{(-)}}", from=3-1, to=1-1]
	\arrow["{\tau^{(-)}}"{description}, from=3-2, to=1-2]
	\arrow["{\sigma^{(-)}}"', from=3-3, to=1-3]
\end{tikzcd}\]
\begin{thm}\label{6.7}
The maps $\lambda^{(-)}, \tau^{(-)}$ and $\sigma^{(-)}$ are isomorphisms of posets.
\end{thm}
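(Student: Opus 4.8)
The strategy is to mimic the proof that $\tau^{(-)}$ is an isomorphism of posets for (ordinary) premeasures, now carried out in the partial-map setting, and to check that restricting that isomorphism to the lax (resp.\ colax) subposets exactly recovers outer (resp.\ inner) premeasures. Since the middle column is already known to be an isomorphism, and since the three vertical maps are compatible with the two pullback squares of inclusions, it suffices to produce mutually inverse order-preserving maps $\lambda^{(-)}\colon M_\leq(X,\mathcal{B})\to[F_\mathcal{B},G]_l^\Sigma$ and $\mu_{(-)}\colon[F_\mathcal{B},G]_l^\Sigma\to M_\leq(X,\mathcal{B})$, and dually for the colax side.

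\textbf{Key steps.} First I would define, for a lax $\Sigma$-transformation $\lambda\colon F_\mathcal{B}\to G$, the set function $\mu_\lambda\colon\mathcal{B}\to[0,\infty]$ by $\mu_\lambda(B):=\lambda_{\mathbf{2}}(B^c,B)_1$, exactly as in the strict case; here $(B^c,B)$ is viewed as a $\mathbf{2}$-indexed disjoint cover of $X$, so it is an element of $F_\mathcal{B}\mathbf{2}$. Using the total map $t\colon\mathbf{1}\to\mathbf{2}$ sending $0$ to $0$ (which lies in $\Sigma$, being injective) and the naturality equality on $\Sigma$-morphisms one gets $\mu_\lambda(\emptyset)=0$, and monotonicity of $\lambda$ gives that $\mu_\lambda$ is order-preserving. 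Second, and this is the crux, I would verify countable subadditivity: given pairwise disjoint $(B_n)_{n\geq 1}$ in $\mathcal{B}$ with $B:=\bigcupdot_n B_n\in\mathcal{B}$, set $B_0:=B^c$ to obtain an element of $F_\mathcal{B}\mathbb{N}$, and use the \emph{partial} finite map $s_0\colon\mathbb{N}\to\mathbf{2}$ defined only on the positive integers, sending every $n\geq 1$ to $1$ — crucially this is where partiality matters, since $B^c=B_0$ must be \emph{discarded} rather than merged. Lax-ness of $\lambda$ applied to $s_0$ gives $\mu_\lambda(B)=\lambda_{\mathbf 2}(F_\mathcal{B}s_0((B_m)_m))_1\leq Gs_0(\lambda_{\mathbb N}((B_m)_m))_1=\sum_{n\geq 1}\lambda_{\mathbb N}((B_m)_m)_n$, while each injective partial map $s_n\colon\mathbb{N}\to\mathbf 2$ (sending $n$ to $1$, elsewhere to $0$) lies in $\Sigma$, so the $\Sigma$-naturality gives $\mu_\lambda(B_n)=\lambda_{\mathbb N}((B_m)_m)_n$ exactly; combining yields $\mu_\lambda(B)\leq\sum_n\mu_\lambda(B_n)$, i.e.\ $\mu_\lambda$ is an outer premeasure. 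Third, I would check the two round-trips: for a countable set $A$ and $a_0\in A$, the injective partial map $s_{a_0}\colon A\to\mathbf 2$ (sending $a_0$ to $1$, everything else to $0$) is in $\Sigma$, so $\tau^{\mu_\lambda}_A((E_a)_a)_{a_0}=\mu_\lambda(E_{a_0})=\lambda_{\mathbf 2}(F_\mathcal{B}s_{a_0}((E_a)_a))_1=Gs_{a_0}(\lambda_A((E_a)_a))_1=\lambda_A((E_a)_a)_{a_0}$, giving $\tau^{\mu_\lambda}=\lambda$; and $\mu_{\tau^\mu}(B)=\tau^\mu_{\mathbf 2}(B^c,B)_1=\mu(B)$. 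The inner/colax case is obtained by reversing every inequality, using that $\lambda$ colax flips the sole inequality above while the $\Sigma$-equalities are untouched.

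\textbf{Main obstacle.} The delicate point is confirming that the \emph{non-}$\Sigma$ morphisms used (the collapsing partial maps $s_0$ and, in the general cofinite-union argument, more complicated finite partial maps) are genuinely finite partial maps and that $F_\mathcal{B}$ really sends the relevant element of $F_\mathcal{B}\mathbb{N}$ to the expected element of $F_\mathcal{B}\mathbf 2$ under them — in other words, bookkeeping which preimages land in $\mathcal{F}_{\mathbf 2}$ and checking that discarding $B^c$ via a partial map is legitimate. This is exactly the step where the partial-function framework is essential rather than cosmetic, and it is why inner and outer premeasures end up on the colax and lax sides respectively; once this is pinned down, everything else is a transcription of the strict-case argument with one inequality inserted. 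I would also remark that the diagram of six maps commutes with the pullback-square inclusions on top and bottom, so the three isomorphisms are compatible, completing the identification of outer/strict/inner premeasures with lax/strict/colax $\Sigma$-transformations.
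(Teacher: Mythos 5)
Your strategy is the same as the paper's: define $\mu_\lambda$ by evaluating $\lambda$ on a small test object, use laxness for a single collapsing map to get countable subadditivity, use $\Sigma$-naturality for injective partial maps to get the exact identities, and check the two round trips; the colax case is dual. The one substantive difference is that the paper exploits the partial-map setting to work with the one-point set, defining $\mu_\lambda(B):=\lambda_{\mathbf 1}(B)$ (a single $B$ is already an element of $F_\mathcal{B}\mathbf 1$, no complement needed), whereas you carry over the $\mathbf 2$-indexed conventions from the total-map section. That choice creates two small mismatches you would need to patch. First, the maps you invoke for the exact equalities --- $s_n$ and $s_{a_0}$ ``sending $n$ (resp.\ $a_0$) to $1$ and everything else to $0$'' --- are total and non-injective as written, hence not in $\Sigma$; for a merely lax $\lambda$ they only give inequalities, so $\mu_\lambda(B_n)=\lambda_{\mathbb N}(x)_n$ and the round trip $\tau^{\mu_\lambda}=\lambda$ would fail as stated. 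You clearly intend the injective \emph{partial} maps defined only on $\{n\}$ (resp.\ $\{a_0\}$), which are in $\Sigma$ and make the argument work; the descriptions just need to be corrected. Second, with your definition $\mu_\lambda(B):=\lambda_{\mathbf 2}(B^c,B)_1$, the subadditivity step computes $\lambda_{\mathbf 2}(F_\mathcal{B}s_0((B_m)_m))_1=\lambda_{\mathbf 2}(\emptyset,B)_1$, which is not literally $\mu_\lambda(B)$; you need one extra application of $\Sigma$-naturality (for the injective partial map $\mathbf 2\to\mathbf 2$ defined only on $\{1\}$) to identify $\lambda_{\mathbf 2}(\emptyset,B)_1$ with $\lambda_{\mathbf 2}(B^c,B)_1$, or simply adopt the $\mathbf 1$-based definition, under which this issue disappears. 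With those repairs the proof coincides with the paper's.
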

\begin{proof}
We will only give the proof for $\lambda^{(-)}$, by constructing an inverse map.

Let $\lambda:F_\mathcal{B}\to G$ be a $\Sigma$-natural lax transformation. For $n\geq 1$, define $\mathbf{n}:=\{0,1,\ldots,n-1\}$. For $B\in \mathcal{B}$, define \[\mu_\lambda(B):=\lambda_\textbf{1}(B).\]
We will now show that this is an outer premeasure. Let $s$ be the unique injective map $\emptyset\to \textbf{1}$ and let $*$ be the unique element in $F_\mathcal{B}\emptyset$. We find \[\mu_\lambda(\emptyset)= \lambda_\textbf{1}(\emptyset)=\lambda_\textbf{1}(F_\mathcal{B}(*))= Gs(\lambda_\emptyset(*))=0.\]

Let $x:=(B_n)_{n\in \mathbb{N}}$ be a collection of pairwise disjoint premeasurable subsets such that $B:=\bigcupdot_{n\in \mathbb{N}}B_n$ is also an element of $\mathcal{B}$. Let $s:\mathbb{N}\to \textbf{1}$ be the map that sends every element to $0$ and for $m\in \mathbb{N}$, let $s_m:\mathbb{N}\to \textbf{1}$ be the partial map that is only defined on $\{m\}$. Note that these maps are finite partial maps and that $s_m$ is injective for all $m\in \mathbb{N}$. We have \begin{align*}
    \mu_\lambda(B) & = \lambda_\textbf{1}(F_\mathcal{B}s(x))\\
    & \leq Gs(\lambda_\mathbb{N}(x)) = \sum_{n\in \mathbb{N}}\lambda_\mathbb{N}(x)_n \end{align*} and for every $n\in \mathbb{N}$, \[\lambda_\mathbb{N}(x)_n=Gs_n(\lambda_\mathbb{N}(x))\\
    = \lambda_\textbf{1}(F_\mathcal{B}s_n(x))=\lambda_\textbf{1}(B_n)=\mu_\lambda(B_n).\]
This shows that $\mu_\lambda$ is an outer premeasure and this gives an order-preserving map $\mu_{(-)}:[F_\mathcal{B},G]_l^\Sigma\to M_\leq (X,\mathcal{B})$. We will now show that this map is the inverse of $\lambda^{(-)}$.

Let $\lambda\in [F_\mathcal{B},G]_l^\Sigma$ and let $A$ be a countable set. Let $a_0\in A$ and let $x:=(B_a)_{a\in A }\in F_\mathcal{B}A$. Let $s_{a_0}:A\to \textbf{1}$ be the finite partial map that is only defined on $\{a_0\}$. This map is injective. From the equalities, \[\lambda^{\mu_\lambda}(x)_{a_0}=\mu_\lambda(B_{a_0})=\lambda_\textbf{1}(F_\mathcal{B}s_{a_0}(x))=\lambda_A(x)_{a_0},\]
we conclude that $\lambda^{\mu_\lambda}=\lambda$. 

For an outer premeasure $\mu$ , we have $\mu_{\lambda^{\mu}}(B)=\lambda_\textbf{1}^\mu(B)=\mu(B)$ for all $B\in \mathcal{B}$. 

\begin{rem}\label{remark}
 From the proof of Theorem \ref{6.7} we can see that we could also restrict $\Sigma$ to collection of all partial maps of the form $$s_{a_0}:A\to \textbf{1},$$ where $A$ is a countable set, $a_0$ is an element of $A$ and the domain of $s_{a_0}$ is $\{a_0\}$. We will use this restriction in Proposition \ref{7.5}. 
\end{rem}
We conclude that $\mu_{(-)}$ is the inverse of $\lambda^{(-)}$. The claim for strict transformations and $\Sigma$-natural colax transformations follows from a similar argument.
\end{proof}

We will now use the second representation to give a formula for joins of premeasures (Proposition \ref{6.11}) and a formula that turns an inner premeasure into a premeasure (Proposition \ref{Rl} and Corollary \ref{6.13}). To do this, we again need Theorem \ref{monadformula} and the dual of Proposition \ref{4.7}. Therefore we first need to verify the conditions of Theorem \ref{monadformula}. We do this in Lemma \ref{lemma2} and Proposition \ref{proppreserves2}.

\begin{lem}\label{lemma2}
The category $\mathbf{Part}_c^f$ has pullbacks and $j:\textbf{Part}_c^f\to \textbf{PartPreMble}$ preserves these.
\end{lem}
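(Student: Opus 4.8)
\textbf{Proof plan for Lemma \ref{lemma2}.}

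The plan is to mirror the proof of Lemma \ref{lemma}, but carefully tracking the domains of the partial maps involved. First I would take finite partial maps $s\colon A\to C$ and $t\colon B\to C$ of countable sets. The natural candidate for the pullback is the set of pairs that agree wherever both are defined, namely
\[P:=\bigl\{(a,b)\in A\times B\mid a\in\mathrm{dom}\,s,\ b\in\mathrm{dom}\,t,\ s(a)=t(b)\bigr\}\cup\bigl\{(a,*)\mid a\in A\bigr\}\cup\bigl\{(*,b)\mid b\in B\bigr\},\]
or, more cleanly, one works in the category of pointed sets / partial maps by adjoining basepoints: $P$ is the ordinary pullback of $s$ and $t$ in $\mathbf{Part}$ (countable sets and partial maps), with projections $p_1\colon P\to A$ and $p_2\colon P\to B$ the evident partial maps, each defined exactly on the pairs whose relevant coordinate lies in the appropriate domain. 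I would first check that this $P$ is countable and that $p_1,p_2$ are \emph{finite} partial maps: for $a\in A$ one has $p_1^{-1}(\{a\})\cong t^{-1}(s(a))$ when $a\in\mathrm{dom}\,s$ (a singleton or all of $\mathrm{dom}\,t$ pulled along, hence in $\mathcal F$), and finiteness of $s,t$ forces $p_1^{-1}(\mathcal F_A)\subseteq\mathcal F_P$; symmetrically for $p_2$. Then I would verify the universal property in $\mathbf{Part}_c^f$: given finite partial maps $q_1\colon D\to A$, $q_2\colon D\to B$ with $s\circ q_1=t\circ q_2$ in $\mathbf{Part}_c^f$ (so in particular they have the same domain of definition, the common preimage), the induced partial map $D\to P$, $d\mapsto(q_1(d),q_2(d))$, is the unique one; checking it is finite again reduces to the corresponding fact for $q_1,q_2$.

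For the second assertion — that $j\colon\mathbf{Part}_c^f\to\mathbf{PartPreMble}$ preserves this pullback — I would argue exactly as in Lemma \ref{lemma}: the underlying partial map $P\to A\times B$ realizes $P$ as the set-theoretic pullback, so it suffices to show that the algebra on $P$ generated by requiring $p_1$ and $p_2$ premeasurable (i.e. generated by $p_1^{-1}(\mathcal F_A)$ and $p_2^{-1}(\mathcal F_B)$) is all of $\mathcal F_P$. This follows from $p_1^{-1}(\{a\})\cap p_2^{-1}(\{b\})=\{(a,b)\}$ for each pair $(a,b)\in P$ lying over $a$ and $b$, so every singleton of $P$ is in the generated algebra; since $P$ is countable, the generated algebra contains all finite and cofinite sets, hence equals $\mathcal F_P$, which is precisely the condition for $j$ to preserve the pullback.

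The only genuinely delicate point — and the main obstacle — is bookkeeping about domains of definition in $\mathbf{Part}_c^f$: one must be careful that the correct notion of ``commuting square'' for partial maps is used ($s\circ q_1=t\circ q_2$ as \emph{partial} maps, which constrains domains, not merely agreement on a common subset), and that the 2-categorical/order structure on $\mathbf{Part}_c^f$ does not interfere, i.e. that this 1-categorical pullback is what is needed downstream. Since the hom-posets here play no role in the pullback construction (it is a strict limit), I expect this to cause no real trouble, but it is the step where a naive transcription of Lemma \ref{lemma} could go wrong. Once the domains are handled correctly, both parts are routine, and the argument is essentially identical to the total-function case.
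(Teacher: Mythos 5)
Your route is genuinely different from the paper's, which disposes of Lemma \ref{lemma2} in two lines: it identifies $\mathbf{Part}_c^f\cong \mathbf{1}/\mathbf{Set}_c^f$ and $\mathbf{PartPreMble}\cong \mathbf{1}/\mathbf{PreMble}$ (adjoin a basepoint, exactly the move you gesture at with ``more cleanly''), notes that $j\cong \mathbf{1}/i$, and then invokes Lemma \ref{lemma}, using that pullbacks in a coslice category are created by the forgetful functor to the base. What the paper's reduction buys is precisely the elimination of the domain bookkeeping that you correctly single out as the delicate point; what your direct construction buys is an explicit description of the pullback object, which the coslice argument leaves implicit. Both proofs ultimately rest on the same two computations (finiteness of the projections, and singletons generating $\mathcal{F}_P$), so in substance you are reproving Lemma \ref{lemma} in the pointed setting rather than citing it.

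However, the domain bookkeeping is exactly where your sketch slips, in two places. First, your explicit $P$ is not the pullback in $\mathbf{Part}$: the basepoint pairs should be $\{(a,*)\mid a\notin\mathrm{dom}\,s\}$ and $\{(*,b)\mid b\notin\mathrm{dom}\,t\}$, not all of $A$ and $B$ --- a pair $(a,*)$ with $a\in\mathrm{dom}\,s$ fails $s_*(a)=t_*(*)$. Second, the parenthetical claim that $s\circ q_1=t\circ q_2$ forces $q_1$ and $q_2$ to have the same domain is false: $q_1$ may be defined at some $d$ with $q_1(d)\notin\mathrm{dom}\,s$ while $q_2$ is undefined at $d$, and then both composites are undefined at $d$; the induced map must send such a $d$ to $(q_1(d),*)$. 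Both errors disappear automatically once you genuinely work in $\mathbf{1}/\mathbf{Set}_c^f$, which is why the paper's reduction is the safer formulation. (A further caveat, inherited equally by your argument and by the paper's via Lemma \ref{lemma}: knowing $p_1^{-1}(\{a\})\cong t^{-1}(s(a))\in\mathcal{F}_B$ does not by itself place $p_1^{-1}(\{a\})$ in $\mathcal{F}_P$ when that fibre is cofinite-infinite in $B$, so the finiteness of the projections deserves more care than either write-up gives it; since this is not a gap relative to the paper's own proof, I only flag it.)
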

\begin{proof}
Let $\mathbf{1}$ be the set with one element. It can be shown that $\mathbf{1}/\textbf{Set}_c^f\cong \textbf{Part}_c^f$ and that $\mathbf{1}/\textbf{PreMble}\cong \textbf{PartPreMble}$. Under these equivalences, $j\cong \textbf{1}/i$, where $i:\textbf{Set}_c^f\to \textbf{PreMble}$. The claim now follows from Lemma \ref{lemma}.
\end{proof}
\begin{prop}\label{proppreserves2}
For $f:A\to B$ in $\mathbf{Part}_c^f$, $Gf$ preserves directed joins.
\end{prop}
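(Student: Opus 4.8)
The plan is to follow the proof of Proposition~\ref{proppreserves} essentially verbatim, after observing that nothing in that argument used the totality of the map (or finiteness) in an essential way. Concretely, for a partial finite map $f:A\to B$ the map $Gf$ still sends $(p_a)_{a\in A}$ to $\bigl(\sum_{f(a)=b}p_a\bigr)_{b\in B}$, where the sum now ranges over $\{a\in\mathrm{dom}f:f(a)=b\}$; this relabelling of each index set is the only place where ``partial'' enters. So the strategy is: given a directed family $(p^i)_{i\in I}$ in $GA$ with pointwise supremum $(p_a)_a$, show $Gf((p_a)_a)_b=\sup_i Gf(p^i)_b$ for each $b\in B$.

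First I would dispose of the easy inequality: since $p_a\ge p_a^i$ for every $i$, monotonicity of (possibly infinite) sums gives $\sum_{f(a)=b}p_a\ge\sum_{f(a)=b}p_a^i$ for every $i$, hence $\ge\sup_i$. For the reverse inequality I would split into the same two cases as in Proposition~\ref{proppreserves}. If some $a$ in the fibre over $b$ has $p_a=\infty$, then picking for each $N$ an index $i_N$ with $p_a^{i_N}\ge N$ shows $\sup_i\sum_{f(a)=b}p_a^i\ge N$ for all $N$, so the supremum is $\infty$. Otherwise all $p_a$ in the fibre are finite; then for a finite subset $A'$ of the fibre and $\epsilon>0$, choose indices $i_a$ ($a\in A'$) with $p_a^{i_a}\ge p_a-\epsilon/|A'|$, use directedness of $(p^i)_i$ together with finiteness of $A'$ to find $i_0$ dominating all of them, and conclude $\sum_{a\in A'}p_a-\epsilon\le\sum_{a\in A'}p_a^{i_0}\le\sum_{f(a)=b}p_a^{i_0}\le\sup_i Gf(p^i)_b$. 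Letting $\epsilon\to0$ and taking the supremum over all finite $A'\subseteq f^{-1}(\{b\})$ yields $\sum_{f(a)=b}p_a\le\sup_i Gf(p^i)_b$.

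An alternative, slightly more structural route is to reduce to the total case already treated. Using the equivalence $\mathbf{Part}_c^f\cong\mathbf{1}/\mathbf{Set}_c^f$ from the proof of Lemma~\ref{lemma2}, a partial finite map $f:A\to B$ corresponds to a total finite map $f_+:A\sqcup\{*\}\to B\sqcup\{*\}$ collapsing the undefined points to $*$. Embedding $GA=[0,\infty]^A$ into $[0,\infty]^{A\sqcup\{*\}}$ by setting the $*$-coordinate to $0$, and projecting $[0,\infty]^{B\sqcup\{*\}}$ onto $[0,\infty]^B=GB$, one checks directly that $Gf$ is the composite of these with the $\mathbf{Set}_c^f$-version of $Gf_+$. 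Since the embedding and the projection compute joins coordinatewise and hence preserve directed joins, and $Gf_+$ does by Proposition~\ref{proppreserves}, so does $Gf$; the only extra thing to verify is that $f_+$ is again finite, which is immediate from the definition of $\mathcal{F}$ on $A\sqcup\{*\}$ and $B\sqcup\{*\}$.

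There is no real obstacle: the argument is a routine adaptation of Proposition~\ref{proppreserves}, and the only points needing a little care are the treatment of coordinates valued in $\infty$ and of infinite fibres $f^{-1}(\{b\})$ — both already handled there — and, in the second approach, the trivial check that $f_+$ is finite.
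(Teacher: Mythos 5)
Your proposal is correct and your primary argument is exactly the paper's: the paper simply states that the proof is the same as that of Proposition \ref{proppreserves}, which is the verbatim adaptation you carry out (with the sum restricted to the domain of $f$). The alternative reduction via $\mathbf{1}/\mathbf{Set}_c^f$ is a valid bonus but not needed.
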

\begin{proof}
The proof is the same as the proof of Proposition \ref{proppreserves}.
\end{proof}
Using the previous results (Lemma \ref{lemma2} and Proposition \ref{proppreserves2}) we can now obtain a formula to turn an outer premeasure into a premeasure in a universal way, by applying Theorem \ref{monadformula}.
\begin{prop}\label{6.10}
The triple $(F_\mathcal{B},G,\Sigma)$ satisfies the strictness condition. Moreover, the map ${\color{OliveGreen}M_\leq(X,\mathcal{B})\cong [F_\mathcal{B},G]_l^\Sigma\to[F_\mathcal{B},G]_s\cong M(X,\mathcal{B})}$, sends an outer premeasure $\mu$ to $\overline{\mu}$, which is defined as \[\overline{\mu}(B):=\sup\left\{\sum_{n=1}^\infty\mu(B_n)\mid \bigcupdot_{n=1}^\infty B_n\subseteq B, \bigcupdot_{n=1}^\infty B_n \in \mathcal{B}\right\},\] for all $B\in \mathcal{B}$.
\end{prop}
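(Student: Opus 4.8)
The plan is to verify the three hypotheses of Theorem \ref{monadformula} for the triple $(F_\mathcal{B},G,\Sigma)$ in the partial-map setting of Section \ref{section6.2}, apply Proposition \ref{4.1} to pass from $\Sigma=\emptyset$ to the full $\Sigma$, and then unwind the resulting coend formula for $\overline{\lambda}$ into the stated supremum over countable disjoint subfamilies. Condition $(G_1)$ is immediate from Proposition \ref{proppreserves2}: $Gf$ preserves directed (hence $\aleph_1$-directed) joins for every partial finite map $f$, so we may take $\kappa=\aleph_1$. Condition $(F_2)$ follows from Lemma \ref{lemma2}, since $\mathbf{Part}_c^f$ has pullbacks and $j$ preserves them, and the hom-functor $\mathbf{PartPreMble}((X,\mathcal{B}),j-)$ preserves all limits; one should note that countable wide pullbacks in $\mathbf{Part}_c^f$ also exist and are preserved, which is the $\kappa=\aleph_1$ version of $(F_2)$ and is checked by the same computation as in Lemma \ref{lemma2} (the generated algebra contains all singletons of the wide fibre product).

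The one hypothesis that genuinely requires attention is $(F_1)$: the inclusion $(l'_A\downarrow x)_= \subseteq l'_A\downarrow x$ must be weakly final. Unlike the $\mathbf{Set}_c^f$ case of the previous subsection, $F_\mathcal{B}A$ is no longer discrete — it is the poset of $A$-indexed families of pairwise disjoint premeasurable sets with premeasurable union, ordered pointwise — so an object of $l'_A\downarrow x$ is a pair $(g\colon C\to A,\, y=(E_c)_{c\in C})$ with $F_\mathcal{B}g(y)\le x$, i.e. with $\bigcupdot_{g(c)=a}E_c \subseteq x_a$ for each $a$, and we must produce a map from such an object to one in which the inclusions are equalities. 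The obstacle is that the "defect" sets $x_a \setminus \bigcupdot_{g(c)=a}E_c$ need not lie in $\mathcal{B}$ individually, but their union over $a$ together with all the $E_c$ does reassemble $\bigcupdot_a x_a$, which is premeasurable. The fix is to adjoin a single new index: enlarge $C$ to $C' := C \sqcup \{*\}$, extend $g$ by sending $*$ to some chosen $a_0$ (or, more carefully, work over $\mathbf 1$ and use a partial map undefined at the extra index so that no defect is forced onto a single fibre), and set the new family to agree with $y$ on $C$ and take the value $\bigcupdot_{a}\big(x_a\setminus\bigcupdot_{g(c)=a}E_c\big)$ — which \emph{is} in $\mathcal B$ since it equals $\big(\bigcupdot_a x_a\big)\setminus \bigcupdot_c E_c$ — at the index $*$; the evident map (inclusion on $C$, with the 2-cell witnessing compatibility) then lands in $(l'_A\downarrow x)_=$. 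I would write this out carefully, since getting the partial-map bookkeeping right — which index the leftover mass is parked on, and why the resulting family is still pairwise disjoint with premeasurable union — is the technical heart of the argument.

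With the three conditions in hand, Theorem \ref{monadformula} gives that $\overline{\lambda}$ (taken with $\Sigma=\emptyset$, hence with $\Sigma$ by Proposition \ref{4.1}, after checking that $(F_\mathcal B,G,\emptyset)$ satisfies the strictness condition — which is exactly what Theorem \ref{monadformula} also yields) is strict and that
\[
\overline{\lambda}_A(x)=\bigvee\left\{Gg(\lambda_C(y))\mid g\colon C\to A;\ y\in F_\mathcal{B}C\text{ with }F_\mathcal{B}g(y)\le x\right\}.
\]
Finally I would specialize to $A=\mathbf 1$, so that $x$ is a single set $B\in\mathcal B$ and $\overline{\mu}(B)=\mu^{\overline\lambda}(B)=\overline\lambda_{\mathbf 1}(B)$: an object contributing to the join is a partial finite map $g\colon C\to\mathbf 1$ with domain some countable $C_0\subseteq C$, together with pairwise disjoint $(E_c)_{c\in C_0}$ with premeasurable union, and the constraint $F_\mathcal B g(y)\le x$ says precisely $\bigcupdot_{c\in C_0}E_c\subseteq B$; the value $Gg(\lambda_C(y))$ at the unique point of $\mathbf 1$ is $\sum_{c\in C_0}\lambda_{\mathbf 1}(E_c)=\sum_{n}\mu(E_n)$. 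Reindexing $C_0$ by $\mathbb N$ (padding with $\emptyset$ if finite, using $\mu(\emptyset)=0$) turns the join into exactly
\[
\overline{\mu}(B)=\sup\left\{\sum_{n=1}^\infty\mu(B_n)\mid (B_n)_n\text{ pairwise disjoint in }\mathcal B,\ \bigcupdot_{n=1}^\infty B_n\subseteq B\right\},
\]
which is the claimed formula.
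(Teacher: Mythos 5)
Your overall strategy is the same as the paper's (verify the hypotheses of Theorem \ref{monadformula}, invoke Proposition \ref{4.1}, then specialize to $A=\mathbf 1$ and translate the join into the supremum over countable disjoint subfamilies), and your final unwinding of the formula is fine. But the step you yourself single out as ``the technical heart of the argument'' --- the verification of $(F_1)$ --- is carried out incorrectly, starting from a misdiagnosis. You claim the obstacle is that the defect sets $x_a\setminus\bigcupdot_{g(c)=a}E_c$ need not lie in $\mathcal B$ individually. They do: $Z_a:=\bigcupdot_{g(c)=a}E_c$ is the preimage of $\{a\}$ under the composite of the premeasurable partial map $X\to C$ corresponding to $(E_c)_c$ with the finite partial map $g$, so $Z_a\in\mathcal B$ and hence $x_a\setminus Z_a\in\mathcal B$ for every $a$. (The paper records exactly this observation.)

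Because of that misdiagnosis, your proposed fix --- adjoin a \emph{single} new index $*$ carrying the total leftover mass $\bigl(\bigcupdot_a x_a\bigr)\setminus\bigcupdot_c E_c$ --- does not produce an object of $(l'_A\downarrow x)_=$ for general countable $A$. If you send $*$ to a chosen $a_0$, the fibre over $a_0$ becomes $x_{a_0}\cup\bigcupdot_{a\neq a_0}(x_a\setminus Z_a)$, which in general is not even contained in $x_{a_0}$, so the new object leaves $l'_A\downarrow x$ altogether; if instead you leave the extended map undefined at $*$, the image of the new family under $F_\mathcal B$ is unchanged and every fibre is still $Z_a\neq x_a$, so you never reach $(l'_A\downarrow x)_=$. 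The correct construction (the one in the paper) adjoins one new index \emph{per} element of $A$, i.e.\ passes to $D:=C\amalg A$ with $\tilde Y_a:=x_a\setminus Z_a$ and the map $g+1:D\to A$ that is $g$ on $C$ and the identity on $A$; the comparison morphism is the partial map $s:D\to C$ that is the identity on $C$ and undefined on $A$ (note the direction: $s$ goes from the new index set to the old one, not an ``inclusion of $C$''). Your remark that Theorem \ref{monadformula} actually needs countable \emph{wide} pullbacks, not just binary ones, is a fair and slightly more careful point than the paper makes explicit, but it does not compensate for the failure of the $(F_1)$ argument.
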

\begin{proof}
By Proposition \ref{proppreserves2}, we know that $Gf$ preserves directed joins for $f:A\to B$ in $\textbf{Part}^f_c$. From Lemma \ref{lemma2} and the fact that hom-functors preserve pullbacks, we know that $F_\mathcal{B}$ preserves pullbacks. 

Let $A$ be a countable set and let $(X_a)_{a\in A} \in F_\mathcal{B}A$. Consider $g:C\to A$ and $(Y_c)_{c\in C}\in F_\mathcal{B}C$ such that $\bigcupdot_{g(c)=a}Y_c\subseteq X_a$ for all $a\in A$. In other words $$(g:C\to A, (Y_c)_{c\in C}$$ is an element of $l'_A\downarrow (X_a)_{a\in A}$.

Because $\bigcupdot_{c\in C}Y_c\in \mathcal{B}$ and $g$ is finite, we know that $Z_a:=\bigcupdot_{g(c)=a}Y_c\in \mathcal{B}$ for all $a\in A$. For $d\in C\amalg A=:D$, define \[\tilde{Y}_d:=\begin{cases}Y_c  \ \mathrm{ if } \ d=c\\
X_a\setminus Z_a \ \mathrm{ if } \ d=a\end{cases}.\]
It is clear that $(\tilde{Y})_{d\in D}$ is an element of $F_\mathcal{B}D$. Consider the finite partial map $g+1:D\to A$, which is the same as $g$ on $C$ and the identity on $A$. Let $s:D\to C$ be the finite partial map that is the identity on $C$ and is not defined on $A$.

We have the following inequality \[\begin{tikzcd}
	C && A \\
	D
	\arrow["g", from=1-1, to=1-3]
	\arrow[""{name=0, anchor=center, inner sep=0}, "{g+1}"', from=2-1, to=1-3]
	\arrow["s", from=2-1, to=1-1]
	\arrow["\leq"{description}, draw=none, from=1-1, to=0]
\end{tikzcd}\]
and we have that $\bigcupdot_{s(d)=c}\tilde{Y}_d=Y_c$ for all $c\in C$ and \[\bigcupdot_{(g+1)(d)=a}\tilde{Y}_d=Z_a\cupdot (X_a\setminus Z_a)=X_a.\]

This means that $$(g+1:D\to A,(\tilde{Y}_d)_{d\in D})$$ is an element of $(l'_A\downarrow (X_a)_{a\in A})_=$ and that $$(g,(Y_c)_{c\in C})\preceq (g+1,(\tilde{Y}_d)_{d\in D}).$$
Therefore it follows that $(l'_A\downarrow (X_a)_{a\in A})_{=}\subseteq(l'_A\downarrow (X_a)_{a\in A})$ is cofinal. Theorem \ref{monadformula} now says that $(F_\mathcal{B},G,\Sigma)$ satisfies the strictness condition and that for $\lambda\in [F_\mathcal{B},G]_l$ and $x\in F_\mathcal{B}A$ \[\overline{\lambda}_A(x)=\bigvee\left\{Gg\lambda_C(y)\mid g:C\to A; y\in F_\mathcal{B}C; F_\mathcal{B}g\leq x\right\}.\]
Let $\mu$ be an outer premeasure, then by definition $\overline{\mu}=\mu_{\overline{\lambda^\mu}}$. 
For $B\in \mathcal{B}$ this means,
\begin{align*}
    \overline{\mu}(B)& =\overline{\lambda^\mu}_\textbf{1}(B)\\
    & = \sup\left\{Gg\lambda^\mu((B_c)_c)\mid g:C\to \textbf{1}; \bigcupdot_{c\in \mathrm{dom}g}B_c\subseteq B\right\}\\
    & = \sup \left\{\sum_{n=1}^\infty \mu(B_n)\mid \bigcupdot_{n=1}^\infty B_n\subseteq B, \bigcupdot_{n=1}^\infty B_n\in\mathcal{B}\right\}.
\end{align*}
\end{proof}
 The following proposition now immediately follows from Corollary \ref{2.4}, giving us a formula for the joins of premeasures.
\begin{prop}\label{6.11}
The poset $M(X,\mathcal{B})$ has all joins and for $(\mu_i)_i$ in $M(X,\mathcal{B})$, \[\left(\bigvee_{i\in I}\mu_i\right)(B)=\sup\left\{\sum_{n=1}^\infty \sup_{i\in I}\mu_i(B_n)\mid \bigcupdot_{n=1}^\infty B_n\subseteq  B, \bigcupdot_{n=1}^\infty B_n\in\mathcal{B}\right\}\] for $B\in \mathcal{B}$.
\end{prop}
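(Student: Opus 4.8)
The plan is to transport the statement through the poset isomorphisms of Subsection \ref{section6.2} and then apply Corollary \ref{2.4} followed by the explicit formula of Proposition \ref{6.10}. First I would check that Corollary \ref{2.4} is applicable in the present setting: $GA=[0,\infty]^A$ is a complete poset for every countable set $A$; for an injective partial finite map $f\in\Sigma$ the map $Gf$ merely relabels coordinates along $f$ and pads with zeros outside the image of $f$, hence preserves arbitrary meets; and $(F_\mathcal{B},G,\Sigma)$ satisfies the strictness condition by Proposition \ref{6.10}.

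Given $(\mu_i)_{i\in I}$ in $M(X,\mathcal{B})$, I would pass to the corresponding strict transformations $\tau^{\mu_i}\in[F_\mathcal{B},G]_s$. Corollary \ref{2.4} then says that $\bigvee_{i\in I}\tau^{\mu_i}$ exists in $[F_\mathcal{B},G]_s$ and equals $\overline{\lambda}$, where $\lambda_A(x):=\bigvee_{i\in I}\tau^{\mu_i}_A(x)$. Since $\tau^{(-)}\colon M(X,\mathcal{B})\to[F_\mathcal{B},G]_s$ is an isomorphism of posets, this already proves that $M(X,\mathcal{B})$ has all joins and that $\bigvee_{i\in I}\mu_i=\mu_{\overline{\lambda}}$.

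It remains to identify $\mu_{\overline{\lambda}}$. For $x=(E_a)_a\in F_\mathcal{B}A$ we have $\lambda_A(x)=\bigl(\sup_{i\in I}\mu_i(E_a)\bigr)_a$, and by Proposition \ref{2.1} the objectwise join $\lambda$ of the lax (indeed strict) transformations $\tau^{\mu_i}$ is again a lax $\Sigma$-transformation; under $\lambda^{(-)}\colon M_\leq(X,\mathcal{B})\to[F_\mathcal{B},G]_l^\Sigma$ it therefore corresponds to the outer premeasure $\nu$ given by $\nu(B)=\lambda_\mathbf{1}(B)=\sup_{i\in I}\mu_i(B)$, i.e. to the pointwise supremum of the $\mu_i$. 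By Proposition \ref{6.10} the operation $\overline{(-)}$ is precisely the map sending such a $\nu$ to $\overline{\nu}(B)=\sup\{\sum_{n=1}^\infty\nu(B_n)\mid\bigcupdot_{n=1}^\infty B_n\subseteq B\}$; substituting $\nu(B_n)=\sup_{i\in I}\mu_i(B_n)=\bigvee_{i\in I}\mu_i(B_n)$ (joins in $[0,\infty]$ are suprema) yields the claimed formula.

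Beyond this bookkeeping there is no real obstacle; the only points worth checking with care are that $Gf$ preserves meets for $f\in\Sigma$ (needed so that Corollary \ref{2.4} genuinely applies) and that one keeps straight which isomorphism — $\tau^{(-)}$ for the strict join, $\lambda^{(-)}$ for the auxiliary lax transformation $\lambda$ — is in play at each step, so that the pointwise supremum of the $\mu_i$ is correctly recognised as an outer premeasure (which is automatic, since $[F_\mathcal{B},G]_l^\Sigma$ is closed under objectwise joins by Proposition \ref{2.1}).
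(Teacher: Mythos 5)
Your proposal is correct and follows exactly the route the paper takes: the paper's own proof is the one-line observation that the result "immediately follows from Corollary \ref{2.4}" (via the isomorphism $\tau^{(-)}$ and the formula of Proposition \ref{6.10}), and your argument is simply a careful unpacking of that, including the worthwhile verification that $Gf$ preserves meets for injective partial maps $f\in\Sigma$ and that the pointwise supremum is an outer premeasure.
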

The next result gives us a way to turn a $\Sigma$-natural general transformation into an outer measure. 
\begin{prop}\label{Rl}
Let $\sigma\in [F_\mathcal{B},G]^\Sigma$. For a countable set $A$ and $x:=(X_a)_{a\in A}\in F_\mathcal{B}A$ and $a\in A$, \[(R_l\sigma)_A(x)_a=\inf \left\{\sum_{n=1}^\infty \sigma_\textbf{1}(B_n)\mid X_a\subseteq \bigcupdot_{n=1}^\infty B_n, \bigcupdot_{n=1}^\infty B_n\in\mathcal{B}\right\}.\]
\end{prop}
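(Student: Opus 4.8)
The plan is to identify the right-hand side of the formula with the largest lax $\Sigma$-transformation lying below $\sigma$, which is precisely what $R_l\sigma$ is: since $U_l$ is a full and faithful order-embedding, its right adjoint $R_l$ (which exists by Corollary \ref{2.2}, because $Gf$ preserves all joins for $f\in\Sigma$) satisfies $R_l\sigma=\max\{\lambda\in[F_\mathcal{B},G]_l^\Sigma\mid \lambda\leq \sigma\}$. A preliminary reduction, used throughout, is that for any $\Sigma$-transformation (in particular any lax one) $\rho:F_\mathcal{B}\to G$ one has $\rho_A(x)_a=\rho_\mathbf{1}(X_a)$ for every countable $A$, every $x=(X_a)_{a\in A}\in F_\mathcal{B}A$ and every $a\in A$; this follows by applying strictness on $\Sigma$ to the injective partial finite map $s_a:A\to\mathbf{1}$ defined only at $a$, and similarly $\rho_\mathbf{1}(\emptyset)=0$ using the map $\emptyset\to\mathbf{1}$. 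Thus everything reduces to the components at $\mathbf{1}$, where $F_\mathcal{B}\mathbf{1}$ is identified with $\mathcal{B}$. (This is, up to this simplification, the dual of Proposition \ref{4.7}.)

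Next I would let $\nu_A(x)_a$ denote the right-hand side of the formula and verify that $\nu$ is a lax $\Sigma$-transformation with $\nu\leq\sigma$. The inequality $\nu\leq\sigma$ is immediate from the trivial cover $X_a\subseteq X_a\cupdot\emptyset\cupdot\cdots$, giving $\nu_A(x)_a\leq\sigma_\mathbf{1}(X_a)=\sigma_A(x)_a$. That $\nu$ is a $\Sigma$-transformation is a routine check, since an injective partial finite map acts on $F_\mathcal{B}$ and on $G$ by relabelling and padding with $\emptyset$, respectively with $0$, and $\sigma_\mathbf{1}(\emptyset)=0$. Laxity is the point that needs care: for a general finite map $f:A\to B$ and $b\in B$ one must show $Gf(\nu_A(x))_b=\sum_{f(a)=b}\nu_A(x)_a\geq \nu_B(F_\mathcal{B}f(x))_b$. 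Assuming the left side finite (the other case being trivial) and given $\varepsilon>0$, choose for each $a\in f^{-1}(\{b\})$ a disjoint cover $X_a\subseteq\bigcupdot_m B_{a,m}$ with $\sum_m\sigma_\mathbf{1}(B_{a,m})$ within a suitably small budget of $\nu_A(x)_a$, the total error being at most $\varepsilon$. Re-enumerating the countable family $\{B_{a,m}\}$ as $B_1,B_2,\dots$ and disjointifying inside the algebra via $C_k:=B_k\setminus(B_1\cup\cdots\cup B_{k-1})$ -- which uses only finite unions, complements and intersections, so $C_k\in\mathcal{B}$ -- yields $\bigcupdot_k C_k=\bigcup_k B_k\supseteq\bigcupdot_{f(a)=b}X_a=(F_\mathcal{B}f(x))_b$ and $\sigma_\mathbf{1}(C_k)\leq\sigma_\mathbf{1}(B_k)$ by monotonicity; hence $\nu_B(F_\mathcal{B}f(x))_b\leq\sum_k\sigma_\mathbf{1}(C_k)\leq\sum_{a,m}\sigma_\mathbf{1}(B_{a,m})\leq\sum_{f(a)=b}\nu_A(x)_a+\varepsilon$, and $\varepsilon\to 0$ gives laxity.

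Finally I would establish maximality: if $\lambda\in[F_\mathcal{B},G]_l^\Sigma$ satisfies $\lambda\leq\sigma$, then $\lambda\leq\nu$. By the preliminary reduction it suffices to show $\lambda_\mathbf{1}(X)\leq\sum_n\sigma_\mathbf{1}(B_n)$ for every disjoint cover $X\subseteq\bigcupdot_n B_n$ with $B_n\in\mathcal{B}$. Replacing $B_n$ by $B_n\cap X$, which only decreases $\sigma_\mathbf{1}$-values, we may assume $X=\bigcupdot_n B_n$, so $y:=(B_n)_n$ is a genuine element of $F_\mathcal{B}\mathbb{N}$; applying laxity of $\lambda$ to the total finite map $t:\mathbb{N}\to\mathbf{1}$ gives $\lambda_\mathbf{1}(X)=\lambda_\mathbf{1}(F_\mathcal{B}t(y))\leq Gt(\lambda_\mathbb{N}(y))=\sum_n\lambda_\mathbb{N}(y)_n=\sum_n\lambda_\mathbf{1}(B_n)\leq\sum_n\sigma_\mathbf{1}(B_n)$. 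Taking the infimum over all such covers gives $\lambda_\mathbf{1}(X)\leq\nu_\mathbf{1}(X)$, hence $\lambda\leq\nu$. Together with $\nu\leq\sigma$ and the fact that $\nu$ is a lax $\Sigma$-transformation, this identifies $\nu$ with $R_l\sigma$. The main obstacle is the laxity step -- specifically, the fact that one can disjointify a countable cover while remaining inside the merely finitely-closed algebra $\mathcal{B}$, combined with the bookkeeping of the $\varepsilon$-approximations when $f^{-1}(\{b\})$ is infinite.
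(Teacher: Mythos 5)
Your argument is correct, and its computational core (the candidate $\nu$ defined by countable disjoint covers, the reduction to components at $\mathbf{1}$ via the injective partial maps $s_a$, and the $\epsilon/2^n$ bookkeeping in the laxity check) coincides with the paper's. Where you genuinely diverge is in how the candidate is identified with $R_l\sigma$: the paper checks only that the candidate is lax and then invokes the dual of Proposition \ref{4.7}, which packages the universal property once and for all via the pointwise meet formula $\bigwedge\{Gg\,\sigma_C(y)\mid Fg(y)\geq x\}$; you instead re-prove the universal property by hand, using that $R_l$, being right adjoint to the full and faithful $U_l$, picks out the largest lax $\Sigma$-transformation below $\sigma$, and verifying maximality directly with the total map $\mathbb{N}\to\mathbf{1}$. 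Your route is more self-contained (and makes explicit the $\Sigma$-equality and $\nu\leq\sigma$ checks that the paper leaves implicit), but it duplicates work that Proposition \ref{4.7} was designed to do. One small point in the laxity step: your disjointification $C_k:=B_k\setminus(B_1\cup\dots\cup B_{k-1})$ produces a pairwise disjoint family in $\mathcal{B}$ whose union $\bigcup_k B_k$ need not lie in $\mathcal{B}$, whereas the covers arising from elements of $F_\mathcal{B}C$ (and hence from the meet formula) have premeasurable union; the paper sidesteps this by intersecting with the already pairwise disjoint sets $X_{a_n}$, so that the resulting union is exactly $(F_\mathcal{B}f(x))_b\in\mathcal{B}$. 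Your argument is repaired by the same move (intersect each $C_k$ with $(F_\mathcal{B}f(x))_b$, which only decreases the $\sigma_\mathbf{1}$-values by monotonicity), so this is a cosmetic rather than substantive gap.
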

\begin{proof}
For a countable set $C$ and an element $c\in C$, let $s_c:C\to \textbf{1}$ be the injective finite partial map that is only defined on $\{c\}$. For $y:=(B_c)_{c\in C}\in F_\mathcal{B}C$, we have \[\sigma_C(y)_c=Gs_c\sigma_C(y)=\sigma_\textbf{1}(B_c).\]

Define for a countable set $A$ and $x:=(X_a)_{a\in A}\in F_\mathcal{B}A$,  \begin{align*}\tau_A(x)_a& :=\left(\bigwedge\left\{Gg\sigma_C(y)\mid g:C\to A; y\in F_\mathcal{B}C; Fg(y)\geq x\right\}\right)_a\\
& = \inf\left\{\sum_{g(c')=a}\sigma_C((B_c)_c)_{c'}\mid g:C\to A; X_a\subseteq \bigcupdot_{g(c')=a}B_{c'} \right\}\\
& = \inf\left\{\sum_{n=1}^\infty \sigma_\textbf{1}(B_n)\mid X_a \subseteq \bigcupdot_{n=1}^\infty B_n, \bigcupdot_{n=1}^\infty B_n\in\mathcal{B}\right\}
\end{align*}
Consider a finite partial map $f:A\to B$ and an element $b\in B$. Write $f^{-1}(\{b\})=:\{a_1,a_2,\ldots \}$. For all $n\in \mathbb{N}$, there exists $(B_m^n)_{m=1}^\infty$ such that  $X_{a_n}\subseteq \bigcupdot_{m=1}^\infty B_m^n$ and such that \[\tau_A(x)_{a_n}+\frac{\epsilon}{2^n}\geq \sum_{m=1}^\infty \sigma_{\textbf{1}}(B^n_m)\geq \sum_{m=1}^\infty \sigma_{\textbf{1}}(B^n_m\cap X_{a_n})\]
Summing over $n$ gives \[Gf\tau_A(x)_b+\epsilon \geq \sum_{n,m}\sigma_\textbf{1}(B_m^n\cap X_{a_n})\geq \tau_B(F_\mathcal{B}f(x))_b\] for all $b\in B$. Here we used that $(B_m^n\cap X_{a_n})_{n,m}$ are pairwise disjoint premeasurable subsets, whose union equals $Ff(x)_b$. Taking $\epsilon\to 0$, shows that $\tau$ is lax. By the dual of Proposition \ref{4.7} it follows that $R_l\sigma=\tau$. 

\end{proof}
This leads to a way of turning inner premeasures into premeasures in a universal way.
\begin{cor}\label{6.13}
The map ${\color{BrickRed} M_{\geq}(X,\mathcal{B})\cong [F_\mathcal{B},G]_c^\Sigma\to [F_\mathcal{B},G]_s\cong M(X,\mathcal{B}})$, sends an inner premeasure $\mu$ to $\underline{\mu}$, which is defined as \[\underline{\mu}(B):=\inf \left\{\sum_{n=1}^\infty \mu(B_n)\mid B\subseteq \bigcupdot_{n=1}^\infty B_n, \bigcupdot_{n=1}^\infty B_n\in\mathcal{B}\right\},\] for all $B\in \mathcal{B}$.
\end{cor}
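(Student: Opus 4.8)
The plan is to obtain this corollary as the dual of Proposition \ref{6.10}, feeding Proposition \ref{Rl} into the strictness machinery that has already been set up. First I would recall that, under the isomorphisms $M_\geq(X,\mathcal{B})\cong [F_\mathcal{B},G]_c^\Sigma$ and $M(X,\mathcal{B})\cong [F_\mathcal{B},G]_s$, the map in the statement is precisely the operation $\underline{(-)}=R_l U_c$, corestricted to $[F_\mathcal{B},G]_s$. That corestriction is legitimate: Proposition \ref{6.10} shows that $(F_\mathcal{B},G,\Sigma)$ satisfies the strictness condition (the hypotheses being $Gf$ preserves directed joins, by Proposition \ref{proppreserves2}, and $F_\mathcal{B}$ preserves pullbacks, by Lemma \ref{lemma2}), so by the dual of Proposition \ref{strictness} the transformation $\underline{\sigma}$ is strict for every colax $\Sigma$-transformation $\sigma$, and $\underline{(-)}$ is the left adjoint to the inclusion $[F_\mathcal{B},G]_s\hookrightarrow [F_\mathcal{B},G]_c^\Sigma$.

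Next I would unwind the isomorphisms at the object $\textbf{1}$, exactly as in the proof of Proposition \ref{6.10} for outer premeasures. The poset $F_\mathcal{B}\textbf{1}$ is identified with $\mathcal{B}$, and the inverse isomorphism $[F_\mathcal{B},G]_s\to M(X,\mathcal{B})$ sends a strict transformation $\tau$ to $B\mapsto \tau_\textbf{1}(B)$. For an inner premeasure $\mu$, the corresponding colax $\Sigma$-transformation $\sigma^\mu$ satisfies $\sigma^\mu_\textbf{1}(B)=\mu(B)$ for all $B\in\mathcal{B}$, since $\sigma^\mu$ is defined by $(E_a)_a\mapsto(\mu(E_a))_a$. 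By definition $\underline{\mu}$ is the premeasure corresponding to the strict transformation $\underline{\sigma^\mu}$, so $\underline{\mu}(B)=(\underline{\sigma^\mu})_\textbf{1}(B)$.

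Finally I would apply Proposition \ref{Rl} to $\sigma:=\sigma^\mu$ with $A=\textbf{1}$, $x=B\in F_\mathcal{B}\textbf{1}$ and $a$ the unique element of $\textbf{1}$. Since $U_c$ is just the inclusion, $\underline{\sigma^\mu}=R_l U_c\sigma^\mu$ agrees with $R_l\sigma^\mu$, and Proposition \ref{Rl} gives $(\underline{\sigma^\mu})_\textbf{1}(B)=\inf\{\sum_{n=1}^\infty \sigma^\mu_\textbf{1}(B_n)\mid B\subseteq\bigcupdot_{n=1}^\infty B_n\}$. Substituting $\sigma^\mu_\textbf{1}(B_n)=\mu(B_n)$ yields exactly the claimed formula for $\underline{\mu}(B)$.

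I do not expect a genuine obstacle here; the only points needing care are bookkeeping ones: confirming that $F_\mathcal{B}\textbf{1}\cong\mathcal{B}$ and that evaluation at $\textbf{1}$ is the inverse isomorphism on the nose (which is the same verification used for $M_\leq$ in Proposition \ref{6.10}), and checking that the strictness condition established there really does license treating $\underline{\sigma^\mu}$ as a strict transformation rather than merely a lax $\Sigma$-transformation, so that it corresponds to an honest premeasure.
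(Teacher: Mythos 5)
Your proposal is correct and follows essentially the same route as the paper: the paper's proof is a one-line application of Proposition \ref{Rl} to $\sigma^\mu$, evaluated at $\textbf{1}$, combined with the identification $\underline{\mu}=\mu_{\underline{\sigma^\mu}}$. You have merely spelled out the bookkeeping (the strictness condition from Proposition \ref{6.10} licensing the corestriction to $[F_\mathcal{B},G]_s$, and the unwinding of the isomorphisms at $\textbf{1}$) that the paper leaves implicit.
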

\begin{proof}
If we apply Proposition \ref{Rl} on an inner premeasure $\mu$, we obtain for $B\in \mathcal{B}$, \[ \underline{\mu}(B) :=\mu_{\underline{\sigma^\mu}}(B) = \underline{\sigma^\mu}_\textbf{1}(B)=(R_l\sigma^\mu)_\textbf{1}(B)=\inf\left\{\sum_{n=1}^\infty \mu(B_n)\mid B\subseteq \bigcupdot_{n=1}^\infty B_n, \bigcupdot_{n=1}^\infty B_n\in\mathcal{B}\right\}.\]
\end{proof}

 \section{The Carath\'eodory extension theorem}\label{7}
 In this last section, we will apply the results of extensions of transformations from sections \ref{3} and \ref{5} to the transformations that represent measures and premeasures, as described in Section \ref{6}. This will lead to a proof of the Carath\'eodory extension theorem.

 Let $(X,\mathcal{B})$ be a premeasurable space. Let $\sigma(\mathcal{B})$ be the $\sigma$-algebra generated by $\mathcal{B}$. Let $\Sigma, F_{\mathcal{B}},F_{\sigma(\mathcal{B})}$ and $G$ as in subsection \ref{section6.2}. For a countable set $A$, we have an inclusion $\iota_A:F_\mathcal{B}A\to F_{\sigma(\mathcal{B})}A$. These define a strict transformation $\iota: F_\mathcal{B}\to F_{\sigma(\mathcal{B})}$. Our goal is to prove that we can extend strict transformations $F_{\mathcal{B}}\to G$ along $\iota$ and that these extensions are proper.

We will show that right extensions of strict transformations always exist, which will imply the Carath\'eodory extension theorem. We will refer to this extension as the \emph{right Carath\'eodory extension}. Moreover, this characterizes the right Carath\'eodory extension by a universal property. We will also briefly discuss left extensions. However, we will show that the \emph{left Carath\'eodory extension} does not exist in general. 

 \subsection{The right Carath\'eodory extension}
We first look at a useful result for right extensions of $\Sigma$-natural general transformations. The following result shows a partial objectwiseness and will later be used to reduce extensions of transformations to the usual Kan extension of order-preserving maps between posets. 

We will use the notations $\mathrm{Ran}^\text{gen}$ and $\mathrm{Ran}^\text{lax}$ to emphasize that we are looking at the right Kan extensions of $\Sigma$-natural general transformations or $\Sigma$-natural lax transformation respectively. 

\begin{lem}\label{generaltransformations}
For $\tau \in [F_\mathcal{B},G]^\Sigma$, the right extension of $\tau$ along $\iota$ exists and \[(\mathrm{Ran}^{\mathrm{gen}}_\iota\lambda)_{\textbf{1}}=\mathrm{Ran}_{\iota_\textbf{1}}\lambda_{\textbf{1}}.\]
In other words, the following commutative diagram exhibits $(\mathrm{Ran}^\mathrm{gen}_\iota \lambda)_{\textbf{1}}$ as right Kan extension of $\lambda_{\textbf{1}}$ along $\iota_{\textbf{1}}$: 
\[\begin{tikzcd}
	{\mathcal{B}} && {[0,\infty]} \\
	{\sigma(\mathcal{B})}
	\arrow["{\lambda_{\textbf{1}}}", from=1-1, to=1-3]
	\arrow["{\iota_{\textbf{1}}}"', from=1-1, to=2-1]
	\arrow["{(\text{Ran}^{\text{gen}}_\iota\lambda)_{\textbf{1}}}"', from=2-1, to=1-3]
\end{tikzcd}\]
\end{lem}
\begin{proof}
We will first show that $[F_\mathcal{B},G]^\Sigma\cong \textbf{Pos}(\mathcal{B},[0,\infty])$; these are the order-preserving maps $\mathcal{B}\to [0,\infty]$. There is an order-preserving map $[F_\mathcal{B},G]^\Sigma\to \textbf{Pos}(\mathcal{B},[0,\infty])$ that sends $\tau \to \tau_\textbf{1}$. 

For an order-preserving map $f:\mathcal{B}\to [0,\infty]$, define \[\tau^f_A\left((B_a)_{a\in A}\right):=(f(B_a))_{a\in A}.\]

Let $\tau \in [F_{\mathcal{B}}, G]^\Sigma$. For a countable set $A$ and $a_0\in A$, let $s_{a_0}:A\to \textbf{1}$ be the finite partial map that is only defined on $\{a_0\}$. Since this map is injective we find that \[\tau_A((B_a)_{a\in A})_{a_0}=Gs_{a_0}(\tau_A((B_a)_{a\in A}))_1 = \tau_\textbf{1}(B_{a_0}).\]
This shows that $\tau^{\tau_\textbf{1}}=\tau$. For an order-preserving map $f:\mathcal{B}\to [0,\infty]$, we find that $\tau^f_\textbf{1}=f$. This shows that $\tau\mapsto \tau_\textbf{1}$ is an isomorphism. 

Clearly, $\textbf{Pos}(\sigma(\mathcal{B}),[0,\infty])\xrightarrow{-\circ \iota_\textbf{1}}\textbf{Pos}(\mathcal{B},[0,\infty])$ has a right adjoint. It is now easy to see that under the identification, this gives a right extension of $\lambda$ along $\iota$ and the claimed equality. 
\end{proof}

We will now describe what right extensions of $\Sigma$-natural lax transformations look like.

\begin{prop} \label{7.2}
Let $\lambda \in [F_{\mathcal{B}},G]^\Sigma_l$. The right extension of $\lambda$ along $\iota$ exists and is proper. Moreover, \[[\mathrm{Ran}^\mathrm{lax}_\iota\lambda]_A (x)_a=\inf \left\{\sum_{n=1}^\infty \lambda_\textbf{1}(B_n)\mid X_a\subseteq \bigcupdot_{n=1}^\infty B_n, \bigcupdot_{n=1}^\infty B_n\in\mathcal{B}\right\}\]
for all countable sets $A$ and $x:=(X_a)_{a\in A}\in F_{\sigma(\mathcal{B})}A$.\end{prop}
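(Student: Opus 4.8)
The plan is to reduce the statement to an explicit construction with outer premeasures, using the isomorphisms $[F_{\mathcal B},G]_l^\Sigma\cong M_\le(X,\mathcal B)$ and $[F_{\sigma(\mathcal B)},G]_l^\Sigma\cong M_\le(X,\sigma(\mathcal B))$ established in Section~\ref{section6.2}, and to exhibit the right extension directly. One cannot simply invoke the general-transformation machinery of Section~\ref{3} (Remark~\ref{3.5}, Proposition~\ref{3.6}) here, because the right extension of $\lambda$ viewed as a \emph{general} $\Sigma$-transformation is in general not lax. Write $\mu:=\mu_\lambda=\lambda_{\textbf 1}$ for the outer premeasure on $\mathcal B$ corresponding to $\lambda$ and define $\mu'\colon\sigma(\mathcal B)\to[0,\infty]$ by
\[\mu'(E):=\inf\left\{\sum_{n=1}^\infty\mu(B_n)\mid B_n\in\mathcal B;\ E\subseteq\bigcupdot_{n=1}^\infty B_n\right\}.\]
I will show that $\mu'$ is an outer premeasure on $\sigma(\mathcal B)$, that $\lambda^{\mu'}\in[F_{\sigma(\mathcal B)},G]_l^\Sigma$ is the right extension of $\lambda$ along $\iota$, and that it is proper; the displayed formula then reads off as $(\mathrm{Ran}_\iota\lambda)_A(x)_a=\mu'(X_a)$.

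First I would note that, as $\mu$ is monotone, the infimum defining $\mu'(E)$ is unchanged if arbitrary (not necessarily disjoint) countable covers of $E$ by members of $\mathcal B$ are allowed: replacing a cover $(C_n)_n$ by $B_1:=C_1$ and $B_n:=C_n\setminus(C_1\cup\cdots\cup C_{n-1})$ gives pairwise disjoint sets in $\mathcal B$ with the same union and a no larger sum. From this reformulation, $\mu'(\emptyset)=0$ and monotonicity of $\mu'$ are immediate, and countable subadditivity of $\mu'$ on $\sigma(\mathcal B)$ follows from the standard $\varepsilon 2^{-n}$ argument (take near-optimal $\mathcal B$-covers of the pieces of a countable partition and concatenate). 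Hence $\mu'$ is an outer premeasure, so $\lambda^{\mu'}$ is a well-defined element of $[F_{\sigma(\mathcal B)},G]_l^\Sigma$. Moreover $\mu'|_{\mathcal B}=\mu$: the inequality $\mu'(B)\le\mu(B)$ uses the cover $(B,\emptyset,\emptyset,\dots)$ and $\mu(\emptyset)=0$, while $\mu(B)\le\mu'(B)$ follows from monotonicity of $\mu$ together with the defining inequality of the outer premeasure $\mu$ applied to $B=\bigcupdot_n(B\cap B_n)$ for any disjoint $\mathcal B$-cover $(B_n)_n$ of $B$. Since $(\lambda^{\mu'}\circ\iota)_A((X_a)_a)=(\mu'(X_a))_a$ and $\lambda_A((X_a)_a)=(\mu(X_a))_a$, this already yields $\lambda^{\mu'}\circ\iota=\lambda$, so the extension — once shown to exist — is proper.

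It remains to check the universal property. Since $\lambda^{\mu'}\circ\iota=\lambda\le\lambda$, the transformation $\lambda^{\mu'}$ is one of the competitors, and I must show it is the largest, i.e.\ $\nu\le\lambda^{\mu'}$ for every $\nu\in[F_{\sigma(\mathcal B)},G]_l^\Sigma$ with $\nu\circ\iota\le\lambda$. Passing to outer premeasures, $\nu=\lambda^{\mu_\nu}$ with $\mu_\nu$ an outer premeasure on $\sigma(\mathcal B)$, and $\nu\circ\iota\le\lambda$ amounts to $\mu_\nu\le\mu$ on $\mathcal B$; then for $E\in\sigma(\mathcal B)$ and any disjoint $\mathcal B$-cover $(B_n)_n$ of $E$,
\[\mu_\nu(E)=\mu_\nu\!\left(\bigcupdot_{n}(E\cap B_n)\right)\le\sum_n\mu_\nu(E\cap B_n)\le\sum_n\mu_\nu(B_n)\le\sum_n\mu(B_n),\]
using subadditivity and monotonicity of $\mu_\nu$. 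Taking the infimum over all such covers gives $\mu_\nu\le\mu'$, hence $\nu\le\lambda^{\mu'}$. Therefore $\mathrm{Ran}_\iota\lambda=\lambda^{\mu'}$ exists, is proper, and $(\mathrm{Ran}_\iota\lambda)_A(x)_a=\mu'(X_a)$ is exactly the claimed formula.

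The one genuinely non-formal point is recognising that the naive candidate — the pointwise right Kan extension $\mathrm{Ran}_{\iota_{\textbf 1}}\lambda_{\textbf 1}$ of the underlying monotone map, namely $E\mapsto\inf\{\mu(B):B\in\mathcal B,\ E\subseteq B\}$, which is what Proposition~\ref{generaltransformations} produces objectwise — is in general \emph{not} an outer premeasure, so the genuine right extension in the lax category is obtained from it by a further countable subadditivisation, and this is precisely what produces the infinite sums. Everything else is the routine disjointification and $\varepsilon 2^{-n}$ bookkeeping above. (Alternatively, one can reach the same formula by combining Proposition~\ref{generaltransformations} with the description of $R_l$ in Proposition~\ref{Rl}, observing that $\mathrm{Ran}_\iota\lambda=R_l$ applied to the general right extension of $\lambda$ since the relevant left adjoints commute; the direct argument above is shorter.)
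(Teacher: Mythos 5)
Your proof is correct, but it takes a genuinely different route from the paper's. The paper gets existence abstractly from Proposition~\ref{3.4}, uses the commuting square of right adjoints to identify $\mathrm{Ran}_\iota\lambda$ with $R_l\tau$ where $\tau$ is the right extension of the underlying \emph{general} $\Sigma$-transformation, and then combines the formula of Proposition~\ref{generaltransformations} (which gives $\tau_{\mathbf 1}(A)=\inf\{\lambda_{\mathbf 1}(B)\mid A\subseteq B,\ B\in\mathcal B\}$) with the formula for $R_l$ from Proposition~\ref{Rl}; the resulting two-layer infimum $\inf\sum\inf$ is then collapsed to the single infimum in the statement by an $\varepsilon 2^{-n}$ argument, and properness comes from $R_l(\lambda)=\lambda$. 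You instead write down the candidate $\mu'$ directly (the classical Carath\'eodory outer measure of $\mu=\lambda_{\mathbf 1}$), check it is an outer premeasure restricting to $\mu$ on $\mathcal B$, and verify the universal property of the right extension by hand: any competitor $\nu$ with $\nu\circ\iota\le\lambda$ satisfies $\mu_\nu(E)\le\sum_n\mu_\nu(E\cap B_n)\le\sum_n\mu(B_n)$ for every disjoint $\mathcal B$-cover, hence $\nu\le\lambda^{\mu'}$. All the individual steps (disjointification, $\mu'|_{\mathcal B}=\mu$ via the outer-premeasure inequality on $B=\bigcupdot_n(B\cap B_n)$, the subadditivity chain) check out against the paper's definitions, and your correct observation that the objectwise general extension $E\mapsto\inf\{\mu(B)\mid E\subseteq B\}$ need not be countably subadditive is precisely the reason the paper must post-compose with $R_l$. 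What each approach buys: yours is shorter and self-contained, establishing existence, the formula, and properness in one pass without the adjoint-lifting machinery; the paper's derivation is longer but exhibits the formula as an instance of its general Kan-extension framework, which is the point of the article --- indeed your closing parenthetical, combining Propositions~\ref{generaltransformations} and~\ref{Rl} via commuting right adjoints, is essentially the paper's proof.
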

\begin{proof}
The existence follows from Proposition \ref{3.4}. The left adjoints in the following diagram commute
\[\begin{tikzcd}
	{[F_{\sigma(\mathcal{B})},G]^\Sigma_l} && {[F_{\mathcal{B}},G]^\Sigma_l} \\
	\\
	{[F_{\sigma(\mathcal{B})},G]^\Sigma} && {[F_{\mathcal{B}},G]^\Sigma}
	\arrow["{U_l}"', from=1-1, to=3-1]
	\arrow["{U_l}", from=1-3, to=3-3]
	\arrow["{-\circ \iota}", from=1-1, to=1-3]
	\arrow["{-\circ \iota}"', from=3-1, to=3-3]
\end{tikzcd}\]
Therefore their right adjoints also commute, i.e.
\[\begin{tikzcd}
	{[F_{\sigma(\mathcal{B})},G]^\Sigma_l} && {[F_{\mathcal{B}},G]^\Sigma_l} \\
	\\
	{[F_{\sigma(\mathcal{B})},G]^\Sigma} && {[F_{\mathcal{B}},G]^\Sigma}
	\arrow["{R_l}", from=3-1, to=1-1]
	\arrow["{R_l}"', from=3-3, to=1-3]
	\arrow["{\text{Ran}^\text{lax}_\iota}"', from=1-3, to=1-1]
	\arrow["{\text{Ran}^\text{gen}_\iota}", from=3-3, to=3-1]
\end{tikzcd}\]
Therefore we have, \[\mathrm{Ran}^\mathrm{lax}_\iota\lambda = R_l(\mathrm{Ran}^\mathrm{gen}_\iota \lambda).\] Let $A$ be a countable set and let $x:=(X_a)_{a\in A}\in F_{\sigma(\mathcal{B})}A$. By Proposition \ref{Rl} and Lemma \ref{generaltransformations} together with the fact that $\mathrm{Ran}_{\iota_\textbf{1}}\lambda_{\textbf{1}}\circ \iota_\textbf{1} = \lambda_\textbf{1}$, since $\iota_\textbf{1}$ is full and faithful, we find that 
\begin{align*}
    [\mathrm{Ran}^\mathrm{lax}_\iota\lambda]_A(x)_a &= \inf\left\{\sum_{n=1}^\infty (\mathrm{Ran}^\mathrm{gen}_\iota\lambda)_\textbf{1}(A_n)\mid X_a\subseteq \bigcupdot_{n=1}^\infty A_n, A_k\in \sigma(\mathcal{B}) \right\}\\ 
    & =  \inf\left\{\sum_{n=1}^\infty \mathrm{Ran}_{\iota_\textbf{1}}\lambda_\textbf{1}(A_n)\mid X_a\subseteq \bigcupdot_{n=1}^\infty A_n, A_k\in \sigma(\mathcal{B})\right\}\\
    & \leq \inf\left\{\sum_{n=1}^\infty \mathrm{Ran}_{\iota_\textbf{1}}\lambda_\textbf{1}(B_n)\mid X_a\subseteq \bigcupdot_{n=1}^\infty B_n, \bigcupdot_{n=1}^\infty B_n\in \mathcal{B} \text{ and }B_k\in \mathcal{B}\right\}\\
    & = \inf\left\{\sum_{n=1}^\infty\lambda_\textbf{1}(B_n)\mid X_a\subseteq \bigcupdot_{n=1}^\infty B_n, \bigcupdot_{n=1}^\infty B_n\in \mathcal{B} \text{ and }B_k\in \mathcal{B}\right\}. 
\end{align*} 

We will now show that the inequality in the third line is in fact an equality.

First, it is easy to see that for $E\in \sigma(\mathcal{B})$ we have that  $$\mathrm{Ran}_{\iota_\textbf{1}}\lambda_\textbf{1}(E)=\inf\{\lambda_\textbf{1}(B)\mid E\subseteq B, B\in \mathcal{B}\},$$ as this is a usual right Kan extension of order-preserving maps between posets. Let $(A_n)_{n=1}^\infty$ be pairwise disjoint in $\sigma(\mathcal{B})$ such that $X_a\subseteq \bigcupdot_{n=1}^\infty A_n$. Let $\epsilon>0$. For every $n\geq 1$, there exists $A_n\subseteq B_n$ such that \[\mathrm{Ran}_{\iota_\textbf{1}}\lambda_\textbf{1}(A_n)+\frac{\epsilon}{2^n}\geq \lambda_\textbf{1}(B_n)\geq \lambda_\textbf{1}(\tilde{B}_n),\]
where $\tilde{B}_1:=B_1$ and $\tilde{B}_n:=B_n\setminus \tilde{B}_{n-1}$ for $n>1$. Summing over $n\geq 1$ gives \[\sum_{n=1}^\infty \mathrm{Ran}_{\iota_\textbf{1}}\lambda_{\textbf{1}}(A_n)+\epsilon\geq \sum_{n=1}^\infty \lambda_{\textbf{1}}(\tilde{B}_n) \geq \inf\left\{\sum_{n=1}^\infty \lambda_{\textbf{1}}(B_n)\mid X_a\subseteq \bigcupdot_{n=1}^\infty B_n, \bigcupdot_{n=1}^\infty B_n\in \mathcal{B}\right\}. \]Taking $\epsilon \to 0$ and taking the infimum over all such $(A_n)_{n=1}^\infty$, yields the claimed equality.

Using Proposition \ref{Rl}, it follows that \[\mathrm{Ran}^\mathrm{lax}_\iota\lambda\circ \iota = R_l(\lambda)=\lambda.\]
This shows that the extension is proper.
\end{proof}
\begin{thm}
Right extensions along $\iota$ of strict transformations exist and are proper.
\end{thm}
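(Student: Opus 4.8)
The plan is to deduce the statement from the preceding proposition by showing that the right extension of a strict transformation, taken among lax $\Sigma$-transformations, is automatically strict. Fix $\tau\in[F_\mathcal{B},G]_s$; via the isomorphism $[F_\mathcal{B},G]_s\cong M(X,\mathcal{B})$ this is a premeasure $\mu$ on $\mathcal{B}$, and in particular $\tau$ is a lax $\Sigma$-transformation. By the preceding proposition the right extension $\nu:=\mathrm{Ran}_\iota\tau\in[F_{\sigma(\mathcal{B})},G]_l^\Sigma$ exists, is proper (so $\nu\circ\iota=\tau$), and
\[
\nu_A(x)_a=\inf\left\{\sum_{n=1}^\infty\mu(B_n)\mid X_a\subseteq\bigcupdot_{n=1}^\infty B_n,\ B_n\in\mathcal{B}\right\}
\]
for $x=(X_a)_{a\in A}\in F_{\sigma(\mathcal{B})}A$; that is, $\nu$ corresponds to the outer premeasure $\mu^{*}$ on $\sigma(\mathcal{B})$ generated by $\mu$, and properness records $\mu^{*}|_\mathcal{B}=\mu$. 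Being a lax $\Sigma$-transformation, $\nu$ corresponds to an \emph{outer} premeasure, so $\mu^{*}$ is monotone, vanishes at $\emptyset$, and is $\sigma$-subadditive; the only thing preventing $\nu$ from being strict is the failure of $\sigma$-superadditivity of $\mu^{*}$ on $\sigma(\mathcal{B})$, i.e.\ of $\sum_n\mu^{*}(E_n)\leq\mu^{*}\!\bigl(\bigcupdot_n E_n\bigr)$ for pairwise disjoint $E_n\in\sigma(\mathcal{B})$.

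First I would rule this out. This is the one step that does not come from a Kan extension formula and that mirrors the classical proof: one checks that every $B\in\mathcal{B}$ satisfies the Carath\'eodory splitting condition $\mu^{*}(E)=\mu^{*}(E\cap B)+\mu^{*}(E\setminus B)$ for all $E\in\sigma(\mathcal{B})$ --- here ``$\leq$'' is $\sigma$-subadditivity and ``$\geq$'' uses the infimum formula for $\mu^{*}$ together with finite additivity of $\mu$ --- and then that the family $\mathcal{M}$ of all $B\in\sigma(\mathcal{B})$ splitting every $E\in\sigma(\mathcal{B})$ is a $\sigma$-algebra; since $\mathcal{B}\subseteq\mathcal{M}$ this forces $\mathcal{M}=\sigma(\mathcal{B})$, whence $\mu^{*}$ is finitely additive (apply the splitting condition to $E=B_1\cupdot B_2$) and therefore, together with $\sigma$-subadditivity, $\sigma$-additive on $\sigma(\mathcal{B})$. (Alternatively, a Dynkin/monotone-class argument on the family of ``good'' sets yields the same conclusion.) I expect this to be the main obstacle, as it carries the genuine content of Carath\'eodory's theorem; everything else is formal.

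Granting that $\nu$ is strict, the theorem follows at once. Since strict transformations are in particular lax $\Sigma$-transformations, the universal property of $\nu=\mathrm{Ran}_\iota\tau$ among lax $\Sigma$-transformations restricts: any $\tau'\in[F_{\sigma(\mathcal{B})},G]_s$ with $\tau'\circ\iota\leq\tau$ satisfies $\tau'\leq\nu$, while $\nu$ is strict and $\nu\circ\iota=\tau\leq\tau$. Hence $\nu$ is the right extension of $\tau$ in $[F_{\sigma(\mathcal{B})},G]_s$, and it is proper because the lax right extension was. (Mere existence is even more formal: arguing as in Proposition~\ref{3.7}, the right adjoint $R$ of the inclusion $[F_{\sigma(\mathcal{B})},G]_s\hookrightarrow[F_{\sigma(\mathcal{B})},G]_l^\Sigma$ --- which exists by the strictness condition of Proposition~\ref{6.10} --- lifts $\mathrm{Ran}_\iota$ to a right adjoint of $-\circ\iota\colon[F_{\sigma(\mathcal{B})},G]_s\to[F_\mathcal{B},G]_s$; strictness of $\nu$ is exactly what makes this lifted right adjoint land on $\nu$ itself and hence be proper.) Translating back through $[F_\mathcal{B},G]_s\cong M(X,\mathcal{B})$ and $[F_{\sigma(\mathcal{B})},G]_s\cong M(X,\sigma(\mathcal{B}))$ then recovers Carath\'eodory's extension theorem, with the extension pinned down by the universal property of a right Kan extension.

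A variant avoiding $\mu^{*}$-measurable sets is to apply the strictification $\overline{(-)}$ of Proposition~\ref{6.10} for $(X,\sigma(\mathcal{B}))$ to the outer premeasure $\nu$, obtaining a premeasure $\overline{\nu}\geq\nu$ on $\sigma(\mathcal{B})$, so that $\overline{\nu}\circ\iota\geq\tau$; verifying the reverse inequality $\overline{\nu}\circ\iota\leq\tau$ (again the Carath\'eodory-type estimate) makes $\overline{\nu}$ a strict transformation restricting to $\tau$, and then the lax universal property forces $\overline{\nu}\leq\nu$, hence $\overline{\nu}=\nu$ and $\nu$ is strict. Either way, the single inequality that must be established by hand is the additivity of the generated outer measure on $\sigma(\mathcal{B})$.
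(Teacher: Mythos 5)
Your argument is correct, but it routes the essential analytic content differently from the paper. You compute $\nu=\mathrm{Ran}_\iota\tau$ at the lax level (the generated outer measure $\mu^{*}$, via the preceding proposition), prove directly that $\mu^{*}$ is $\sigma$-additive on $\sigma(\mathcal{B})$ by the classical Carath\'eodory splitting-condition argument ($\mathcal{B}$ is contained in the $\sigma$-algebra of $\mu^{*}$-measurable sets), and then observe that a lax right extension which happens to be strict is automatically the strict right extension; all of these steps are sound, and the final formal step is a legitimate shortcut past the adjoint-lifting machinery. The paper instead keeps the conclusion entirely formal --- it invokes Corollary \ref{3.8}, for which the one nontrivial input is that the strictification monad commutes with restriction, $\overline{\lambda}\circ\iota=\overline{\lambda\circ\iota}$ for \emph{every} lax $\Sigma$-transformation $\lambda$ on $F_{\sigma(\mathcal{B})}$ --- and supplies that input from Corollary \ref{C.3}, which rests on the symmetric-difference approximation lemma of Theorem \ref{C.1} (every $A\in\sigma(\mathcal{B})$ is approximable by some $B\in\mathcal{B}$ with $\mu(A\Delta B)<\epsilon$ in the finite case) rather than on $\mu^{*}$-measurability. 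Your second ``variant'' (strictify $\nu$ and check $\overline{\nu}\circ\iota\leq\tau$) is in fact essentially what Corollary \ref{3.8} does, so the real divergence is only in which hand-proved measure-theoretic lemma certifies the key inequality: the splitting condition for you, the approximation lemma for the paper. What your route buys is directness and a weaker hypothesis (you only need the estimate for the single outer measure $\mu^{*}$, not for all of them); what it costs is that it reimports wholesale the classical ``trick'' that the paper's categorical framework was designed to repackage, whereas the paper's choice isolates the analytic content as a statement about the monad $\overline{(-)}$ that slots into the general theory of Section \ref{3} and avoids mentioning $\mu^{*}$-measurable sets at all.
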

\begin{proof}  
Corollary \ref{C.3} states that $\overline{\mu\mid_\mathcal{B}}=\overline{\mu}\mid_\mathcal{B}$ for all outer measures $\mu$.
This shows that $\overline{\lambda}\circ \iota = \overline{\lambda\circ \iota}$ for all $\lambda \in [F_{\sigma(\mathcal{B})},G]_l^\Sigma$. Because $(F_\mathcal{B},G,\Sigma)$ and $(F_{\sigma(\mathcal{B})},G,\Sigma)$ satisfy the strictness condition, the result now follows from Corollary \ref{3.8} and Proposition \ref{7.2}.
\end{proof}
\begin{thm}[Carath\'edory]\label{7.4}
Let $\rho$ be a premeasure on a premeasurable space $(X,\mathcal{B})$. There exists a measure $\mu$ on $(X,\sigma(\mathcal{B}))$, such that $\mu(B)=\rho(B)$ for all $B\in \mathcal{B}$.  
\end{thm}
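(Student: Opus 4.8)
The plan is to transport the premeasure $\rho$ into the world of strict transformations, apply the extension theorem proved just above, and then transport the result back. By the isomorphism $M(X,\mathcal{B})\cong[F_\mathcal{B},G]_s$ of Section \ref{section6.2}, the premeasure $\rho$ corresponds to a strict transformation $\tau^\rho:F_\mathcal{B}\to G$, characterised by $\tau^\rho_\textbf{1}(B)=\rho(B)$ for all $B\in\mathcal{B}$ (here $\textbf{1}$ is the one-element set, under the identification $F_\mathcal{B}\textbf{1}\cong\mathcal{B}$). The theorem just proved, that right extensions of strict transformations along $\iota:F_\mathcal{B}\to F_{\sigma(\mathcal{B})}$ exist and are proper, then produces a strict transformation $\mathrm{Ran}_\iota\tau^\rho:F_{\sigma(\mathcal{B})}\to G$ with $\mathrm{Ran}_\iota\tau^\rho\circ\iota=\tau^\rho$; moreover this extension is characterised by the universal property of right extensions.

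Next I would transport back. Since $\sigma(\mathcal{B})$ is a $\sigma$-algebra, every countable disjoint union of its members again lies in $\sigma(\mathcal{B})$, so a premeasure on $(X,\sigma(\mathcal{B}))$ is precisely a measure; applying the isomorphism theorem of Section \ref{section6.2} to the premeasurable space $(X,\sigma(\mathcal{B}))$ gives $M(X,\sigma(\mathcal{B}))\cong[F_{\sigma(\mathcal{B})},G]_s$. I would then define $\mu$ to be the measure on $(X,\sigma(\mathcal{B}))$ corresponding to $\mathrm{Ran}_\iota\tau^\rho$ under this isomorphism; explicitly $\mu(B)=(\mathrm{Ran}_\iota\tau^\rho)_\textbf{1}(B)$ for $B\in\sigma(\mathcal{B})$.

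It then remains only to check that $\mu$ agrees with $\rho$ on $\mathcal{B}$. The point is that $\iota_\textbf{1}:F_\mathcal{B}\textbf{1}\to F_{\sigma(\mathcal{B})}\textbf{1}$ is simply the subset inclusion $\mathcal{B}\hookrightarrow\sigma(\mathcal{B})$, so for every $B\in\mathcal{B}$,
\[\mu(B)=(\mathrm{Ran}_\iota\tau^\rho)_\textbf{1}(\iota_\textbf{1}B)=(\mathrm{Ran}_\iota\tau^\rho\circ\iota)_\textbf{1}(B)=\tau^\rho_\textbf{1}(B)=\rho(B),\]
where the third equality is properness of the extension.

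I do not expect any genuine obstacle here: all of the real content — that $Gf$ preserves directed joins (Proposition \ref{proppreserves2}), that $F_\mathcal{B}$ and $F_{\sigma(\mathcal{B})}$ preserve pullbacks (Lemma \ref{lemma2}), the weak-finality hypothesis and the strictness condition for the relevant triples (Proposition \ref{6.10}), and the compatibility $\overline{\mu\mid_\mathcal{B}}=\overline{\mu}\mid_\mathcal{B}$ feeding Corollary \ref{3.8} — has already been established in the preceding sections. The only care needed is the routine bookkeeping that the poset isomorphisms $M\cong[F,G]_s$ intertwine restriction of (pre)measures along $\mathcal{B}\subseteq\sigma(\mathcal{B})$ with precomposition of transformations with $\iota$, which is an immediate unwinding of the definitions of $\tau^{(-)}$, $\mu_{(-)}$, and $\iota$.
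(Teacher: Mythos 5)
Your proposal is correct and is essentially the paper's own proof, just spelled out in more detail: the paper likewise takes $\mu$ to be the measure corresponding under $M(X,\sigma(\mathcal{B}))\cong[F_{\sigma(\mathcal{B})},G]_s$ to the right extension of $\tau^\rho$ along $\iota$, and deduces $\mu(B)=\rho(B)$ from properness. The extra bookkeeping you flag (that the isomorphisms intertwine restriction along $\mathcal{B}\subseteq\sigma(\mathcal{B})$ with precomposition by $\iota$) is indeed the only remaining check, and it unwinds immediately from the definitions as you say.
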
 
\begin{proof}
Let $\mu$ be the measure corresponding to the right extension along $\iota$ of the strict transformation $\tau^\rho:F_\mathcal{B}\to G$. Since the extension is proper, it follows that $\mu(B)=\rho(B)$ for all $B\in \mathcal{B}$.
\end{proof}

\subsection{The left Carath\'eodory extension}
We start by showing that extensions of $\Sigma$-natural colax transformations along $\iota$ are objectwise and proper. This is an application of Theorem \ref{objectwisesigmacolax}.
 \begin{prop} \label{7.5}
Left extensions along $\iota$ of $\Sigma$-natural colax transformations exist and are objectwise and proper.   
 \end{prop}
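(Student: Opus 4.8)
The plan is to verify the hypotheses of Theorem \ref{objectwisesigmacolax} for the strict transformation $\iota:F_\mathcal{B}\to F_{\sigma(\mathcal{B})}$, which will give objectwiseness of left extensions of colax $\Sigma$-transformations, and then to invoke Lemma \ref{propernesslemma} to get properness. For the first hypothesis $(G1)$, we have already shown in Proposition \ref{proppreserves2} that $Gf$ preserves directed joins for all $f$ in $\mathbf{Part}_c^f$, so with $\kappa=\aleph_0$ this is immediate. The main work is hypothesis $(\iota1)$: I need to show that $\iota_A\downarrow x$ is $\kappa$-directed (i.e.\ directed) for every countable set $A$ and every $x=(X_a)_{a\in A}\in F_{\sigma(\mathcal{B})}A$. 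Concretely, objects of $\iota_A\downarrow x$ are pairs consisting of $(E_a)_{a\in A}\in F_\mathcal{B}A$ together with the data of $\iota_A((E_a)_a)\leq x$, i.e.\ $E_a\subseteq X_a$ for all $a$ and $E_a\in\mathcal{B}$. Given two such objects $(E_a)_a$ and $(E'_a)_a$, the natural candidate for an upper bound is $(E_a\cup E'_a)_a$, which still lies in $\mathcal{B}$ (finite unions), is still a disjoint family bounded by $x$, and — crucially — its union is again premeasurable since it equals $(\bigcupdot_a E_a)\cup(\bigcupdot_a E'_a)$, a finite union of elements of $\mathcal{B}$. This exhibits $\iota_A\downarrow x$ as (upward) directed, and it is nonempty because the empty family $(\emptyset)_a$ is always an object. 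For hypothesis $(\iota2)$, I need left adjoints $(F_\mathcal{B}f)_*$ and $(F_{\sigma(\mathcal{B})}f)_*$ for injective partial finite maps $f:A\to B$ making the evident square commute; since an injective $f$ induces, on collections of disjoint premeasurable subsets, essentially a reindexing-and-zero-extension, the left adjoint just forgets the components not in the image of $f$, and this manifestly commutes with the inclusions $\iota$. Hence Theorem \ref{objectwisesigmacolax} applies and left extensions of colax $\Sigma$-transformations along $\iota$ are objectwise.

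With objectwiseness in hand, properness follows from Lemma \ref{propernesslemma}: each $\iota_A:F_\mathcal{B}A\to F_{\sigma(\mathcal{B})}A$ is an inclusion of posets (a full and faithful functor), so the objectwise left extension restricts back to $\lambda$ on the nose. Existence of the left extension is part of the conclusion of Theorem \ref{objectwisesigmacolax} together with Lemma \ref{lem} (the restriction map acquires a left adjoint), or can be read off directly from the objectwise formula $(\mathrm{Lan}_\iota\sigma)_A(x)_a=\sup\{\sum\sigma_{\mathbf 1}(B_n)\mid \bigcupdot_n B_n\subseteq X_a,\ B_n\in\mathcal{B}\}$, using that $\iota_A\downarrow x$ is directed so the colimit over it is a genuine join in the complete poset $GA$.

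The step I expect to be the main obstacle is checking that $\iota_A\downarrow x$ is genuinely directed, specifically the subtlety that membership of a family in $F_\mathcal{B}A$ requires not only that each $E_a\in\mathcal{B}$ and the $E_a$ are pairwise disjoint, but also that $\bigcupdot_a E_a\in\mathcal{B}$; one must confirm that this closure condition is preserved under the componentwise union used to build upper bounds, and that disjointness is not destroyed — both hold because $E_a,E'_a\subseteq X_a$ and the $X_a$ are pairwise disjoint, so $E_a\cup E'_a\subseteq X_a$ keeps the enlarged family disjoint. A secondary point of care is $(\iota2)$: one should be slightly careful about how partial finite injections act on the functors $F_\mathcal{B}$ and $F_{\sigma(\mathcal{B})}$ and verify the naturality square literally, though this is routine once the action is spelled out. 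Everything else reduces to previously established facts: $(G1)$ is Proposition \ref{proppreserves2}, and properness-from-objectwiseness is Lemma \ref{propernesslemma}.
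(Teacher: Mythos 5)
Your proposal is correct and follows essentially the same route as the paper's proof: verify $(G1)$ via Proposition \ref{proppreserves2}, verify $(\iota1)$ by taking componentwise unions $(B^1_a\cup B^2_a)_a$ to show $\iota_A\downarrow x$ is directed, verify $(\iota2)$ with the reindexing left adjoint $(B_b)_b\mapsto (B_{f(a)})_a$ for injective partial $f$, then apply Theorem \ref{objectwisesigmacolax} and Lemma \ref{propernesslemma}. Your explicit justification that disjointness and premeasurability of the union are preserved under the componentwise union is a detail the paper leaves implicit, but the argument is the same.
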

\begin{proof}
 
We want to apply Theorem \ref{objectwisesigmacolax}. Therefore we will show that the three conditions of this result are satisfied.

Let $f:A\to B$ be a finite partial map. We know from Proposition \ref{proppreserves2} that $Gf$ preserves directed joins. This shows that $(G1)$ from Theorem \ref{objectwisesigmacolax} is satisfied. 

Consider a countable set $A$ and $x:=(X_a)_{a\in A}\in F_{\sigma(\mathcal{B})}A$ and $b^1:=(B^1_a)_{a\in A}$ and $b^2:=(B^2_a)_{a\in A}$ in $F_\mathcal{B}A$ such that $b^1\leq x$ and $b^2\leq x$. Then $(B^1_a\cup B^2_a)_{a\in A}$ are pairwise disjoint premeasurable subsets and clearly $B^1_a\cup B^2_a\subseteq X_a$ for all $a\in A$. Therefore $\iota_A\downarrow x$ is directed. This means that condition $(\iota 1)$ of Theorem \ref{objectwisesigmacolax} holds for $\kappa=\aleph_0$.

By Remark \ref{remark}, we can restrict $\Sigma$ to the collection of partial maps of the form $s_{a_0}:A\to \textbf{1}$, where $A$ is a countable set, $a_0$ is an element of $A$ and the domain of $s_{a_0}$ is $\{a_0\}$. 

For such a  partial map $s_{a_0}:A\to \textbf{1}$, we have that  $F_\mathcal{B}s_a:F_\mathcal{B}A\to F_\mathcal{B}\textbf{1}$ is defined by the assignment $$(B_a)_{a\in A}\mapsto B_{a_0}.$$

Now define $(F_\mathcal{B}s_{a_0})_*:F_\mathcal{B}\textbf{1}\to F_\mathcal{B}A$ by 

$$(F_\mathcal{B}s_{a_0})_*(B)_a:=\begin{cases} B \text{ if }a=a_0\\
\emptyset \text{ otherwise}\end{cases},$$ for all $B\in F_\mathcal{B}\textbf{1}$ and $a\in A$.

It is straightforward to verify that $(F_\mathcal{B}s_{a_0})_*$ is left adjoint to $F_\mathcal{B}s_{a_0}$ and these left adjoints satisfy condition $(\iota_2)$ in Theorem \ref{objectwisesigmacolax}. 

It follows now from Theorem \ref{objectwisesigmacolax} that left extensions along $\iota$ exist and are objectwise. Since $\iota_A$ is full and faithful for all countable sets $A$, we know by Lemma \ref{propernesslemma} that the extension is proper. 
\end{proof}

However, left extensions along $\iota$ of \emph{strict} transformations might not exist in general. In other words, the left Carath\'eodory extension of a premeasure does not always exist, as can be seen from the following counterexample, based on Example 4.20 in \cite{wise}. 

\begin{eg}
Let $\mathcal{B}$ be the algebra of subsets of $\mathbb{Q}$, generated by $\{(a,b]\cap \mathbb{Q}\mid a,b\in \mathbb{Q}\}$. Let $\rho:\mathcal{B}\to [0,\infty]$ be the premeasure that takes the value $\infty$ for all non-empty subsets in $\mathcal{B}$. Suppose that this premeasure has a left Carath\'eodory extension $\mu$. Note that since right Caratheodory extensions exist and are proper, we know by Proposition \ref{3.9} that the left Carath\'edoroy extension has to be proper. For $r>0$, there is a measure $\mu_r$ on $\mathbb{Q}$ that is defined by $\mu_r(\{q\})=r$ for all $q\in \mathbb{Q}$. We clearly have that $\mu_r(B)=\rho(B)$ for all $B\in \mathcal{B}$. By the universal property of left extensions it follows that $\mu\leq \mu_r$, i.e. for all $r>0$ and $q\in \mathbb{Q}$, we have \[\mu(\{q\})\leq r.\] This implies that $\mu=0$, which is clearly a contradiction. 
\end{eg}

\begin{rem}
It follows immediately from the $\pi$-$\lambda$ theorem (Theorem 1.19 in \cite{klenke}), that a \emph{finite} measure is determined by its values on a generating algebra of subsets. 
This means that if there exist a proper extension, it has to be unique. By the Carath\'eodory extension theorem (Theorem \ref{7.4}), we know that such an extension exist. 

It follows now that for \emph{finite} premeasures, the right and the left Carath\'edory extensions exist and are equal. 
\end{rem}
\newpage
\appendix 
 \section{Lax coends}\label{A}
 In this section we will give the definition of lax coends. More details about lax coends can be found in chapter 7 of \cite{loregian}.

Let $\mathcal{A}$ and $\mathcal{B}$ be poset-enriched categories and let $S:\mathcal{A}^\mathrm{op}\times \mathcal{A}\to \mathcal{B}$ be an enriched functor. 

\begin{defn}
A \textbf{lax cowedge} $\omega$ consists of \begin{itemize}
    \item an object $B$ in $\mathcal{B}$,
    \item a morphism $\omega_A: S(A,A)\to B$ for every object $A$ in $\mathcal{A}$ and 
    \item for every morphism $f:A_1\to A_2$, a 2-cell $\omega_f$ \[\begin{tikzcd}
	{S(A_1,A_2)} && {S(A_2,A_2)} \\
	\\
	{S(A_1,A_1)} && B
	\arrow["{\omega_{A_1}}"', from=3-1, to=3-3]
	\arrow["{\omega_{A_2}}", from=1-3, to=3-3]
	\arrow["{S(1_{A_1}\times f)}"', from=1-1, to=3-1]
	\arrow["{S(f\times 1_{A_2})}", from=1-1, to=1-3]
	\arrow["{\leq }"{description}, draw=none, from=3-1, to=1-3]
\end{tikzcd}.\]
\end{itemize}
\end{defn}

\begin{defn}
Let $\omega^1:=(B^1,(\omega^1_A)_A,(\omega^1_f)_f)$ and $\omega^2:=(B^2,(\omega^2_A)_A,(\omega^2_f)_f)$ be lax cowedges of $S$. A \textbf{morphism of cowedges} from $\omega^1$ to $\omega^2$ is a morphism $s:B_1\to B_2$ such that \[\begin{tikzcd}
	{S(A_1,A_2)} && {S(A_2,A_2)} && {S(A_1,A_2)} && {S(A_2,A_2)} \\
	&&& {=} \\
	{S(A_1,A_1)} && {B_1} && {S(A_1,A_1)} && {B_2} \\
	&&& {B_2}
	\arrow["{\omega_{A_1}}"{description}, from=3-1, to=3-3]
	\arrow["{\omega_{A_2}}"{description}, from=1-3, to=3-3]
	\arrow["{S(1_{A_1}\times f)}"', from=1-1, to=3-1]
	\arrow["{S(f\times 1_{A_2})}", from=1-1, to=1-3]
     \arrow["{\leq }"{description}, draw=none, from=3-1, to=1-3]
	\arrow["s"{description}, from=3-3, to=4-4]
	\arrow[""{name=0, anchor=center, inner sep=0}, "{\omega^2_{A_1}}"', from=3-1, to=4-4]
	\arrow[""{name=1, anchor=center, inner sep=0}, "{\omega^2_{A_2}}", from=1-3, to=4-4]
	\arrow["{S(f\times 1_{A_2})}", from=1-5, to=1-7]
	\arrow["{S(1_{A_1}\times f)}"', from=1-5, to=3-5]
	\arrow["{\omega^2_{A_1}}"{description}, from=3-5, to=3-7]
	\arrow["{\omega_{A_2}^2}"{description}, from=1-7, to=3-7]
     \arrow["{\leq }"{description}, draw=none, from=3-5, to=1-7]
	\arrow["{=}"{description}, draw=none, from=3-3, to=0]
	\arrow["{=}"{description}, draw=none, from=3-3, to=1]
\end{tikzcd}\]
for all morphism $f:A_1\to A_2$. 
\end{defn}

The category of lax cowedges of $S$ and their morphims is denoted by $\mathbf{Cowedge}_l(S)$. 

\begin{defn}
The lax coend of $S$ is an initial object in $\textbf{Cowedge}_l(S)$.
\end{defn}

If $(B,(\omega_A)_A,(\omega_f)_f)$ is the lax coend of $S$, i.e. the initial object in $\textbf{Cowedge}_l(S)$, then we also write \[B=:\ointclockwise^{A\in \ob\mathcal{A}}S(A,A).\]
Dually, we can define a category of colax cowedges of $S$, $\textbf{Cowedge}_c(S)$ and define colax coends as initial objects in this category. The notation we will use for the colax coend of $S$ is $\ointctrclockwise^{A\in \ob\mathcal{A}}S(A,A).$
 \section{Finite outer premeasures}\label{C}
An outer premeasure $\mu$ on $(X,\mathcal{B})$ is called \textbf{finite} if $\overline{\mu}(X)<\infty$.\footnote{Here we use the notation of Proposition \ref{6.10}} 
\begin{thm}\label{C.1}
Let $(X,\mathcal{B})$ be a premeasubrable space and let $\mu$ be an outer premeasure on $(X,\sigma(\mathcal{B}))$. Then $\mu$ is finite if and only if $\mu\mid_{\mathcal{B}}$ is finite. In this case for every $\epsilon>0$ and $A\in \sigma(\mathcal{B})$, there exists a $B\in \mathcal{B}$ such that $\mu(A\Delta B)<\epsilon.$ Also in this case, \[\overline{\mu}(X)=\overline{\mu\mid_{\mathcal{B}}}(X).\]
\end{thm}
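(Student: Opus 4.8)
The plan is to prove the three assertions in sequence, since each feeds into the next. First I would recall from Proposition \ref{6.10} that $\overline{\mu}(X) = \sup\{\sum_n \mu(B_n) \mid \bigcupdot_n B_n \subseteq X\}$, so $\overline{\mu}$ is an actual measure on $(X,\sigma(\mathcal B))$ extending... wait — here $\mu$ is already an outer premeasure on $\sigma(\mathcal B)$, and $\overline{\mu}$ is its associated measure. The equivalence ``$\mu$ finite iff $\mu\mid_{\mathcal B}$ finite'' is the crux. The direction $\overline{\mu}(X)<\infty \Rightarrow \overline{\mu\mid_{\mathcal B}}(X)<\infty$ is immediate because every countable disjoint family in $\mathcal B$ is in particular one in $\sigma(\mathcal B)$, so $\overline{\mu\mid_{\mathcal B}}(X) \le \overline{\mu}(X)$. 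The reverse direction is where the work lies: assuming $\overline{\mu\mid_{\mathcal B}}(X) < \infty$, I must show $\overline{\mu}(X) < \infty$, and this will come out together with the last displayed equality. Concretely, since $\overline{\mu\mid_{\mathcal B}}(X)<\infty$, the outer-measure-style formula forces $\mu(B) \le \overline{\mu\mid_{\mathcal B}}(X) < \infty$ for every $B \in \mathcal B$ (taking the one-term family), so $\mu\mid_{\mathcal B}$ is a finite outer premeasure.

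Next I would establish the approximation statement: for $A \in \sigma(\mathcal B)$ and $\epsilon>0$ there is $B \in \mathcal B$ with $\mu(A \Delta B) < \epsilon$. The standard move is a good-sets/monotone-class argument: let $\mathcal D$ be the collection of $A \in \sigma(\mathcal B)$ that are approximable in this sense. One checks $\mathcal B \subseteq \mathcal D$ trivially, that $\mathcal D$ is closed under complementation (since $A \Delta B = A^c \Delta B^c$), and that $\mathcal D$ is closed under countable unions: given $A = \bigcup_n A_n$ with $A_n$ approximable, use finiteness of $\overline{\mu}$ (or rather of $\overline{\mu\mid_{\mathcal B}}$, once we know $\overline{\mu}$ is finite) to choose $N$ with $\overline{\mu}\big(\bigcup_{n>N} A_n\big)$ small, approximate each $A_n$ for $n \le N$ within $\epsilon/2^{n+1}$ by $B_n \in \mathcal B$, and take $B = \bigcup_{n\le N} B_n$. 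So $\mathcal D$ is a $\sigma$-algebra containing $\mathcal B$, hence equals $\sigma(\mathcal B)$. A subtlety: this argument uses that $\mu$ (equivalently $\overline{\mu}$) is countably subadditive on $\sigma(\mathcal B)$ and finite — but finiteness of $\overline{\mu}$ is exactly what I am still trying to prove, so I must arrange the logic carefully, perhaps by first proving a provisional bound $\mu(A) \le \overline{\mu\mid_{\mathcal B}}(X) + \epsilon$ for all $A$ via approximation-in-progress, then bootstrapping.

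Finally, for the displayed equality $\overline{\mu}(X) = \overline{\mu\mid_{\mathcal B}}(X)$: the inequality $\ge$ was noted above. For $\le$, take any disjoint family $(A_n)$ in $\sigma(\mathcal B)$; I want $\sum_n \mu(A_n) \le \overline{\mu\mid_{\mathcal B}}(X)$. Using the approximation statement, replace each $A_n$ by $B_n \in \mathcal B$ with $\mu(A_n \Delta B_n) < \epsilon/2^n$; then $\mu(A_n) \le \mu(B_n) + \epsilon/2^n$. The $B_n$ need not be disjoint, so I would disjointify within $\mathcal B$ (set $\tilde B_n = B_n \setminus \bigcup_{k<n} B_k$, which lies in $\mathcal B$ since $\mathcal B$ is an algebra), losing nothing by monotonicity of $\mu$ applied to $B_n \mapsto \tilde B_n$ — actually one must be slightly careful, because $\mu$ on $\mathcal B$ is only an outer premeasure, so $\mu(\tilde B_n)$ could exceed the ``right'' value; the cleaner route is $\sum \mu(A_n) \le \sum \mu(B_n) + \epsilon \le \sum \mu(\tilde B_n) + (\text{correction}) + \epsilon$, and control the correction through $\overline{\mu\mid_{\mathcal B}}$. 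I expect the main obstacle to be precisely this last bookkeeping: passing from the (possibly overlapping) approximants $B_n$ to a genuine disjoint family in $\mathcal B$ whose $\mu$-sum is bounded by $\overline{\mu\mid_{\mathcal B}}(X)$, while only having subadditivity (not additivity) of $\mu$ on $\mathcal B$. The fix is to note that, once approximation is available, $\mu$ restricted to $\mathcal B$ agrees with $\overline{\mu\mid_{\mathcal B}}$ on a large enough class (or is dominated by it in the relevant sense), which closes the loop; I would isolate that as a lemma before assembling the final chain of inequalities and letting $\epsilon \to 0$.
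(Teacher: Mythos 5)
Your overall strategy coincides with the paper's: the easy direction by restricting the supremum, a good-sets argument for the approximation property, and then the approximation property to force $\overline{\mu}(X)\le\overline{\mu\mid_{\mathcal{B}}}(X)$. But the two places you flag as ``needing care'' are exactly where the content of the theorem lives, and your proposed fixes do not close them. The circularity in the countable-union step is real: you cannot choose $N$ with $\overline{\mu}\bigl(\bigcup_{n>N}A_n\bigr)$ small, because finiteness and tail-control for $\mu$ on $\sigma(\mathcal{B})$ are not yet available. The resolution is to (i) close the good sets only under countable \emph{disjoint} unions (an algebra closed under countable disjoint unions is already a $\sigma$-algebra, since $\bigcup_nA_n=\bigcup_n\bigl(A_n\setminus\bigcup_{k<n}A_k\bigr)$), and (ii) prove, for any finite disjoint family $A_1,\dots,A_N$ of approximable sets, the uniform bound
\[
\sum_{n=1}^N\mu(A_n)\;\le\;\overline{\mu\mid_{\mathcal{B}}}(X)+\epsilon .
\]
This is obtained by choosing the approximants $B_n\in\mathcal{B}$ to be \emph{pairwise disjoint}: disjointify via $\tilde{B}_n:=B_n\setminus\bigcup_{k<n}B_k$ and check, using $A_n\subseteq A_k^c$ for $k<n$, that $A_n\Delta\tilde{B}_n\subseteq\bigcup_{k\le n}(A_k\Delta B_k)$, so the approximation error only degrades by a controllable factor. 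Then $\mu(A_n)\le\mu(A_n\cap B_n)+\mu(A_n\setminus B_n)\le\mu(B_n)+\epsilon/N$ by subadditivity and monotonicity, and $\sum_{n=1}^N\mu(B_n)\le\overline{\mu\mid_{\mathcal{B}}}(X)$ holds by the very definition of $\overline{\mu\mid_{\mathcal{B}}}(X)$ as a supremum over disjoint families in $\mathcal{B}$ --- no auxiliary lemma comparing $\mu$ with $\overline{\mu\mid_{\mathcal{B}}}$ on $\mathcal{B}$ is needed. Letting $N\to\infty$ gives $\sum_{n=1}^\infty\mu(A_n)\le\overline{\mu\mid_{\mathcal{B}}}(X)<\infty$, which is precisely the tail control you were missing, and completes closure under countable disjoint unions.

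The same bound then does the remaining work for free: once the good sets exhaust $\sigma(\mathcal{B})$, it says every countable disjoint family in $\sigma(\mathcal{B})$ has $\mu$-sum at most $\overline{\mu\mid_{\mathcal{B}}}(X)$, i.e.\ $\overline{\mu}(X)\le\overline{\mu\mid_{\mathcal{B}}}(X)$; combined with the trivial reverse inequality this yields both the finiteness of $\mu$ and the displayed equality simultaneously. So the ``bookkeeping obstacle'' you isolate at the end is the same disjointification already required inside the union step; it should be carried out there, once, rather than deferred to a separate lemma whose statement you have not pinned down.
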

\begin{proof}
If $\mu$ is finite then, $\overline{\mu\mid_{\mathcal{B}}}(X)\leq \overline{\mu}(X)<\infty$ and therefore $\mu\mid_{\mathcal{B}}$ is finite.

Suppose now that $\mu\mid_{\mathcal{B}}$ is finite. Define the set \[S:=\left\{A\in\sigma(\mathcal{B})\mid \forall \epsilon >0\ \exists B\in \mathcal{B}:\mu(A\Delta B)<\epsilon \right\}\]
If $A\in S$ and $\epsilon>0$, then there exists $B$ such that $\mu(A\Delta B)<\epsilon$. Therefore $\mu(A^c\Delta B)= \mu(A\Delta B^c)<\epsilon$. We conclude that $A^c\in S$ and therefore $S$ is closed under complements.

Let $\epsilon>0$. For $A_1,A_2\in S$, there exists $B_1,B_2\in \mathcal{B}$ such that $\mu(A_1\Delta B_1)<\frac{\epsilon}{2}$ and $\mu(A_2\Delta B_2)<\frac{\epsilon}{2}$. Since $(A_1\cup A_2)\Delta (B_1\cup B_2)\subseteq (A_1\Delta B_1)\cup (A_2\Delta B_2)$, it follows that $\mu((A_1\cup A_2)\Delta (B_1\cup B_2))\leq \epsilon$. Therefore $S$ is closed under finite unions. 

Let $\epsilon>0$ and let $(A_n)_{n=1}^\infty$ pairwise disjoint in $S$. For $N\geq 1$, there exist $(B_n)_{n=1}^N$ such that $\mu(A_n\Delta B_n)\leq \frac{\epsilon}{2N}$ for all $n\in \{1,\ldots, N\}$. We can choose $(B_n)_{n=1}^N$ such that they are pairwise disjoint.\footnote{Because $A_n\in \mathcal{S}$, there exists $\tilde{B}_n\in \mathcal{B}$ such that $\mu(A_n\Delta \tilde{B}_n)<\frac{\epsilon}{2N^2}$. Now define $B_1:=\tilde{B}_1$ and $B_n:= \tilde{B}_n\setminus \bigcup_{k=1}^{n-1}B_{k}$ for $n\geq 2$. Clearly $(B_n)_{n=1}^N$ is a pairwise disjoint collection of elements in $\mathcal{B}$. Moreover, since $B_n\subseteq \tilde{B}_n$ and $A_n\subseteq A_{n-1}^C$ for all $1\leq n\leq N$, we have \begin{align*}
  A_n\Delta B_n & = (A^c_n\cap B_n) \cup (A_n \cap B_n^c)\\
     &= (A^c_n\cap B_n)\cup (A_n \cap \tilde{B}_n^c) \cup (A_n\cap B_{n-1})\cup \ldots \cup(A_n\cap B_1)\\
    & \subseteq (A^c_n\cap \tilde{B}_n)\cup (A_n \cap \tilde{B}_n^c) \cup (A_{n-1}^c\cap \tilde{B}_{n-1})\cup \ldots \cup (A_1^c\cap \tilde{B}_1)\\
    & \subseteq (A_n\Delta \tilde{B}_n) \cup (A_{n-1}\Delta \tilde{B}_{n-1})\cup \ldots \cup (A_1\Delta \tilde{B}_1)\\
\end{align*}
Therefore $\mu(A_n\Delta B_n)\leq \frac{n\epsilon}{2N^2}\leq \frac{\epsilon}{2N}$.} It follows now that \[\sum_{n=1}^N\mu(A_n)\leq \sum_{n=1}^N(\mu(A_n\cap B_n)+\mu(A_n\Delta B_n))\leq \sum_{n=1}^N\left(\mu(B_n)+\frac{\epsilon}{N}\right) = \sum_{n=1}^N\mu(B_n)+\epsilon \leq \overline{\mu\mid_{\mathcal{B}}}(X)+\epsilon\]


By letting $N\to \infty$ and then letting $\epsilon \to 0$, we find for pairwise disjoint $(A_n)_{n=1}^\infty$ in $S$, that \begin{equation}\label{eq}
    \sum_{n=1}^\infty \mu(A_n)\leq \overline{\mu\mid_\mathcal{B}}(X) <\infty
\end{equation} 
It follows that there exists $N\geq 1$ such that $\sum_{n=N+1}^\infty\mu(A_n)<\frac{\epsilon}{2}$. Because \[\left(\bigcupdot_{n=1}^\infty A_n\right)\Delta\left(\bigcupdot_{n=1}^NB_n\right)\subseteq \left(\bigcup_{n=1}^NA_n\Delta B_n\right) \cup \bigcupdot_{n=N+1}^\infty A_n,\]
we have that \[\mu\left(\bigcupdot_{n=1}^\infty A_n\Delta\bigcupdot_{n=1}^NB_n\right)\leq \sum_{n=1}^N\mu(A_n\Delta B_n) + \sum_{n=N+1}^\infty\mu(A_n) \leq \epsilon.\]
We conclude that $S$ is closed under countable disjoint unions. This means that $S$ is a $\sigma$-algebra that contains $\mathcal{B}$ and therefore $S=\sigma(\mathcal{B})$. This already proves the second claim in the statement. By (\ref{eq}) it also follows that $\overline{\mu}(X)\leq \overline{\mu_\mathcal{B}}(X)$. This shows that $\mu$ is finite and the last claim in the statement.
\end{proof}
\begin{cor}\label{C.2}
Let $\mu$ be an outer measure on $(X,\sigma(\mathcal{B}))$, then $\overline{\mu}(X)=\overline{\mu\mid_{\mathcal{B}}}(X)$.
\end{cor}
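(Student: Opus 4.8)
The plan is to deduce the corollary from Theorem \ref{C.1} by a case distinction on whether $\mu$ is finite. Recall that, by the definition preceding Theorem \ref{C.1}, an outer premeasure $\nu$ on a premeasurable space is \emph{finite} precisely when $\overline{\nu}$ of the ambient set is $<\infty$. First I would check that $\mu\mid_{\mathcal{B}}$ really is an outer premeasure on $(X,\mathcal{B})$: it is order-preserving, vanishes on $\emptyset$, and for pairwise disjoint $(B_n)_n$ in $\mathcal{B}$ with $\bigcupdot_n B_n\in\mathcal{B}$ the required inequality $\mu\mid_{\mathcal{B}}\!\left(\bigcupdot_n B_n\right)\le\sum_n\mu\mid_{\mathcal{B}}(B_n)$ is a special case of the countable subadditivity of $\mu$ on $\sigma(\mathcal{B})$. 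Hence Theorem \ref{C.1} and the notion of finiteness both apply to $\mu$ and to $\mu\mid_{\mathcal{B}}$.

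If $\mu$ is finite, Theorem \ref{C.1} applies directly and already states $\overline{\mu}(X)=\overline{\mu\mid_{\mathcal{B}}}(X)$, so there is nothing more to do. If $\mu$ is not finite, then $\overline{\mu}(X)=\infty$ by the definition of finiteness; by the equivalence in Theorem \ref{C.1}, $\mu\mid_{\mathcal{B}}$ is not finite either, so $\overline{\mu\mid_{\mathcal{B}}}(X)=\infty$ as well. In both cases the two sides agree, which is the claimed equality.

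I do not expect any real obstacle here: the substance is entirely contained in Theorem \ref{C.1}, and the corollary simply records that the finiteness hypothesis there is unnecessary for the equality evaluated at $X$, because the remaining case collapses to $\infty=\infty$. The only point requiring any care is the preliminary verification that $\mu\mid_{\mathcal{B}}$ is an outer premeasure, which as noted is immediate. As a sanity check one can additionally observe, using Proposition \ref{6.10} applied to both $(X,\mathcal{B})$ and $(X,\sigma(\mathcal{B}))$, that $\overline{\mu\mid_{\mathcal{B}}}(X)\le\overline{\mu}(X)$ always holds, since every countable disjoint cover of $X$ by members of $\mathcal{B}$ is in particular one by members of $\sigma(\mathcal{B})$ on which $\mu$ and $\mu\mid_{\mathcal{B}}$ agree; this makes the non-finite case fully transparent and is consistent with the equality supplied by Theorem \ref{C.1} in the finite case.
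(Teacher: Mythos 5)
Your proof is correct and follows exactly the paper's argument: the finite case is Theorem \ref{C.1} verbatim, and in the non-finite case both sides equal $\infty$ because Theorem \ref{C.1}'s equivalence forces $\mu\mid_{\mathcal{B}}$ to be non-finite as well. The extra verification that $\mu\mid_{\mathcal{B}}$ is an outer premeasure is a reasonable (if routine) addition the paper leaves implicit.
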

\begin{proof}
If $\mu$ is finite, then this follows from Theorem \ref{C.1}. If $\mu$ is not finite then, $\mu\mid_{\mathcal{B}}$ is also not finite by Theorem \ref{C.1} and the result also follows.
\end{proof}
\begin{cor}\label{C.3}
Let $\mu$ be an outer measure on $(X,\sigma(\mathcal{B}))$, then $\overline{\mu\mid_{\mathcal{B}}}(B)=\overline{\mu}(B)$ for all $B\in \mathcal{B}$.
\end{cor}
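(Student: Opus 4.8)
The plan is to localize the whole problem to the set $B$ and then invoke Corollary \ref{C.2}. One inequality is immediate: comparing the suprema in the formula of Proposition \ref{6.10}, every pairwise disjoint family $(B_n)_n$ in $\mathcal{B}$ with $\bigcupdot_n B_n\subseteq B$ is in particular a family in $\sigma(\mathcal{B})$, so $\overline{\mu\mid_{\mathcal{B}}}(B)\leq \overline{\mu}(B)$. The content is therefore the reverse inequality, and for that I would reduce to Corollary \ref{C.2}.

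First I would fix $B\in \mathcal{B}$ and pass to the premeasurable space $(B,\mathcal{B}\!\mid_B)$, where $\mathcal{B}\!\mid_B:=\{C\in \mathcal{B}\mid C\subseteq B\}$; since $B\in \mathcal{B}$ this is an algebra of subsets of $B$. The key bookkeeping ingredient is the classical fact that the trace of a generated $\sigma$-algebra is generated by the trace, i.e. $\sigma(\mathcal{B}\!\mid_B)=\sigma(\mathcal{B})\!\mid_B=\{C\in \sigma(\mathcal{B})\mid C\subseteq B\}$ (one inclusion is clear; the other follows because $\{C\subseteq X\mid C\cap B\in \sigma(\mathcal{B}\!\mid_B)\}$ is a $\sigma$-algebra containing $\mathcal{B}$). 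Next I would note that the restriction $\mu_B$ of $\mu$ to $\sigma(\mathcal{B})\!\mid_B$ is again an outer measure on $(B,\sigma(\mathcal{B}\!\mid_B))$: order-preservation, $\mu_B(\emptyset)=0$ and the $\sigma$-subadditivity inequality on disjoint families all pass to a sub-$\sigma$-algebra unchanged. Similarly $\mu_B\!\mid_{\mathcal{B}\mid_B}$ is an outer premeasure on $(B,\mathcal{B}\!\mid_B)$, and it is literally $\mu\!\mid_{\mathcal{B}}$ restricted to the subsets of $B$.

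Then I would unwind Proposition \ref{6.10} on both sides. A pairwise disjoint family $(B_n)_n$ with $\bigcupdot_n B_n\subseteq B$ has every $B_n\subseteq B$, so $B_n\in \sigma(\mathcal{B})$ iff $B_n\in \sigma(\mathcal{B})\!\mid_B$ and $B_n\in \mathcal{B}$ iff $B_n\in \mathcal{B}\!\mid_B$; hence the suprema defining $\overline{\mu}(B)$ and $\overline{\mu_B}(B)$ coincide term by term, as do those defining $\overline{\mu\mid_{\mathcal{B}}}(B)$ and $\overline{\mu_B\mid_{\mathcal{B}\mid_B}}(B)$. Applying Corollary \ref{C.2} to the premeasurable space $(B,\mathcal{B}\!\mid_B)$ and the outer measure $\mu_B$ gives $\overline{\mu_B}(B)=\overline{\mu_B\mid_{\mathcal{B}\mid_B}}(B)$, and chaining the three equalities yields $\overline{\mu}(B)=\overline{\mu\mid_{\mathcal{B}}}(B)$, as desired.

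The only step I expect to require care rather than pure formalism is the trace-$\sigma$-algebra identity $\sigma(\mathcal{B}\!\mid_B)=\sigma(\mathcal{B})\!\mid_B$; it is standard, but it should be spelled out so that the reduction to Corollary \ref{C.2} is airtight. Everything else is a matter of matching up the two suprema in the formula of Proposition \ref{6.10}.
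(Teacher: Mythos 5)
Your proposal is correct and follows essentially the same route as the paper: localize to the premeasurable space $(B,\{C\in\mathcal{B}\mid C\subseteq B\})$, verify the trace identity $\sigma(\mathcal{B}\!\mid_B)=\sigma(\mathcal{B})\!\mid_B$ by the same auxiliary-$\sigma$-algebra argument, match the suprema from Proposition \ref{6.10}, and invoke Corollary \ref{C.2}. No gaps.
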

\begin{proof}
Let $B\in \mathcal{B}$. Consider $\mathcal{B}':=\{C\in \mathcal{B}\mid C\subseteq B\}$. Note that $\mathcal{B}'$ is an algebra on the subset $B$, i.e. $\mathcal{B}'\subseteq \mathcal{P}(B)$ is a subalgebra. It can be checked that $\sigma(\mathcal{B}') = \sigma(\mathcal{B})'$.\footnote{Clearly $\mathcal
{B}'\subseteq \sigma(\mathcal{B})'$ and since $\sigma(\mathcal{B})'$ is a $\sigma$-algebra, we have $\sigma(\mathcal{B}')\subseteq \sigma(\mathcal{B})'$. For the other inclusion, consider \[S:=\{A\subseteq X\mid A\cap B\in \sigma(\mathcal{B'})\}.\] It is easy to see that $S$ is a $\sigma$-algebra containing $\mathcal{B}$. Therefore $\sigma(\mathcal{B})\subseteq S$. Thus for $A\in \sigma(\mathcal{B})'$, we have that $A\in S$, which means that $A=A\cap B\in \sigma(\mathcal{B}')$.} Let $\mu'$ be the outer measure on $(B,\sigma(\mathcal{B}'))$ defined by $\mu'(C):=\mu(C)$ for all $C\in \sigma(\mathcal{B}')$. Using the formula from Proposition \ref{6.10}, it is now easy to see that $\overline{\mu'}(B)=\overline{\mu}(B)$ and $\overline{\mu'\mid_{\mathcal{B}'}}(B)=\overline{\mu\mid_\mathcal{B}}(B)$. Using Corollary \ref{C.2}, we conclude that $\overline{\mu\mid_\mathcal{B}}(B)=\overline{\mu}(B)$ for all $B\in \mathcal{B}$.
\end{proof}
\bibliographystyle{abbrv}
\bibliography{ref.bib}
\end{document}